\title{Connecting orbits for families of Tonelli Hamiltonians}
\author{Vito Mandorino}
\newtheorem{theorem}{Theorem}
\newtheorem{corollary}[theorem]{Corollary}
\newtheorem{lemma}[theorem]{Lemma}
\newtheorem{proposition}[theorem]{Proposition}
\theoremstyle{definition}
\newtheorem{definition}[theorem]{Definition}
\newtheorem{remark}[theorem]{Remark}
\newcommand{\eps}{\varepsilon}
\newcommand{\R}{\mathbb{R}}
\newcommand{\N}{\mathbb{N}}
\newcommand{\T}{\mathbb{T}}
\newcommand{\W}{\mathbb{W}}
\newcommand{\U}{\mathbb{U}}
\newcommand{\V}{\mathbb{V}}
\newcommand{\m}{\mathbb{M}}
\newcommand{\Z}{\mathbb{Z}}
\newcommand{\F}{\mathcal{F}}
\newcommand{\I}{\mathcal{I}}
\newcommand{\G}{\mathcal{G}}
\newcommand{\p}{\mathbb{P}}
\newcommand{\Aa}{\tilde{\mathcal{A}}}
\newcommand{\Mm}{\tilde{\mathcal{M}}}
\newcommand{\Nn}{\tilde{\mathcal{N}}}
\newcommand{\Sic}{{\Sigma^\infty_c}}
\newcommand{\sic}{{\sigma^\infty_c}}
\def\cprime{$'$}
\date{}
\begin{document}

\maketitle

\begin{abstract}
We investigate the existence of Arnold diffusion-type orbits for systems obtained by iterating in any order the time-one maps of a family of Tonelli Hamiltonians. Such systems are known as `polysystems' or `iterated function systems'.\ When specialized to families of twist maps on the cylinder, our results are similar to those obtained by Moeckel \cite{Moe02} and Le Calvez \cite{LeC07}. Our approach is based on weak KAM theory and is close to the one used by Bernard in \cite{Ber08} to study the case of a single Tonelli Hamiltonian. 
\end{abstract}

\tableofcontents

\section{Introduction}

Much work has been carried out in order to understand the instability properties of Hamiltonian systems, especially for Hamiltonians which are convex in the momenta variables $p$. The basic case of a periodic Hamiltonian defined on the cotangent space $T^*\T\cong\T\times\R$ of the one-dimensional torus $\T=\R/\Z$ corresponds to exact-symplectic twist maps on the cylinder, see \cite{Mos86}. Quite a lot is known in this case, thanks for instance to the original works of Birkhoff \cite{Bir32region,Bir32courbes} and to the KAM and Aubry-Mather theories for twist maps. In particular, a general principle is that the non-contractible invariant circles are the unique obstruction to instability phenomena such as the drift in the $p$-variable.

\medbreak
The situation becomes more complicated when generalizing to higher dimension, namely to Hamiltonians defined on $T^*\T^d$, $d\in\N$, or, more generally, on the cotangent space $T^*M$ of a $d$-dimensional manifold $M$.\ In this setting, among others the variational approach of Mather and Fathi's weak KAM theory has been fruitful, especially in the framework of the so-called Tonelli Hamiltonians. The Mather, Aubry and Mañé sets introduced by Mather and Fathi generalize the invariant circles and the Aubry-Mather sets for twist maps, and provide at the same time both an obstruction and a dynamical skeleton for the instability phenomena. This has allowed a better comprehension of the mechanisms underlying the phenomenon of Arnold diffusion which was firstly exhibited in the seminal paper \cite{Arn64} on a concrete example.

\medbreak
Some studies have also been devoted to the following different generalization: one keeps the dimension $d=1$, and consider instead a family of several twist maps at once, which can be iterated in any order. Following \cite{Mar08}, we shall call such a system a \emph{polysystem},\footnote{The expression \emph{Iterated Functions System} is also used to designate these systems.} and \emph{polyorbits} its (discrete-time) trajectories, see Definition \ref{polyorbit} for more rigour. Of course, the trajectories of a map in the family are also trajectories for the polysystem, thus the polysystem presents at least the same unstable behaviors as the single maps in the family. Nevertheless, one expects new kinds of unstable behavior possibly to be created: some obstructions for a map may be circumvented by non-trivial iterations of other maps in the family. Moeckel \cite{Moe02}, Le Calvez \cite{LeC07} and Jaulent \cite{Jau} have studied this problem, extending some results for single twist maps to the polysystem case. In particular, the general emerging principle is that the unique obstructions to instability phenomena, such as the drift in the $p$-variable, are the common non-contractible invariant circles.

\medbreak
In this paper, we try to merge both generalizations, i.e.\ we deal with a family of several Hamiltonians in arbitrary dimension and we investigate the presence of unstable polyorbits (often we will call them ``diffusion polyorbits'' or ``connecting polyorbits''). More precisely, we will consider the polysystem associated to a family $\F$ of one-periodic Tonelli Hamiltonians\footnote{We recall that a one-periodic Tonelli Hamiltonian is a $C^2$ function $H(x,p,t)$ defined on $T^*M\times\T$ which is strictly convex in $p$ (with positive definite Hessian $\partial^2_p H>0$), superlinear in $p$, and whose Hamiltonian flow is complete.} defined on the cotangent space of a compact $d$-dimensional manifold $M$ without boundary. Just as in the one-dimensional twist map case, one expects that some new unstable behavior may be created by non-trivial iterations of the time-one maps of the family. On the other hand, unlike the single-Hamiltonian case, there is not a definition of Mather, Aubry and Mañé sets for polysystems, hence one may expect the obstructions to come expressed in terms of some more complicated objects.

Our discussion will be in the framework of weak Kam theory, for which we refer to \cite{Fat}. The ideas will be close to those in Bernard's paper \cite{Ber08}, of which the present work may be seen as a generalization to the polysystem case (especially of Section 8 in that paper). We call our method for the construction of unstable polyorbits ``Mather mechanism'', after the paper \cite{Mat93} which introduced some of the basic ideas of the construction. In \cite{Ber08} a slightly different ``Arnold mechanism'' is also presented, more reminiscent of the aforementioned paper \cite{Arn64}.

The results which we obtain are rather abstract in nature: essentially, they give sufficient conditions in order for the diffusion orbits to occur between two cohomology classes (in the sense of Proposition \ref{diffusion intro}). The conditions are encoded, locally around a cohomology class $c$, in a subspace $R(c)$ of ``allowed cohomological directions for diffusion'' (Theorem \ref{theorem intro}). This subspace is in turn defined (cf.\ \eqref{R(c) definition} and Proposition \ref{equivalent expressions}) in terms of some sort of generalized Aubry-Mather sets for the polysystem (the sets $\I_\Phi(\G)$ defined in Remark \ref{remark laxoleinik}(i)), which may be in principle quite difficult to decipher. We believe that the generality of our construction may compensate for this abstract character. Moreover, some further study may lead to more transparent conditions, at least in presence of additional hypotheses. For instance, in the twist map case we are able to recover ``concrete'' and ``optimal'' results (see Corollary \ref{d=1 intro}), similar to those already proved with different methods by Le Calvez and Moeckel, and extending some other results of Mather in \cite{Mat91twist} for a single twist map. 

On the negative side, using a result of Cui \cite{Cui10} we show that, if (in arbitrary dimension) the Hamiltonians in the family commute, our mechanism does not give rise to new instability phenomena, which is somehow expected.

\medbreak
As for the interest in studying Hamiltonian polysystems, let us mention that a motivation lies in the fact that the behavior of some complex single-Hamiltonian systems 
may be to some extent reduced to the analysis of simpler polysystems. We are aware for instance of a work of Bounemoura and Pennamen \cite{BouPen12}, where the polysystem approach is used 
in a neighborhood of an invariant normally hyperbolic manifold, 
and some works of Marco therein cited.

\subsection{Main results}

Before introducing our results, let us review the kind of statements which we want to generalize.

\medbreak

For an exact-symplectic twist map $F$ on the cylinder $\T\times\R$, the archetypal instability result is the following: if, for $A<B$, the annulus $\T\times[A,B]\subset\T\times\R$ does not contain any non-contractible invariant circle, then there exists an orbit $(x_n,p_n)_{n\in\Z}$ such that
$p_{0}<A$ and $p_{N}>B$ for some $N\in\N$. This dates back to Birkhoff \cite{Bir32region, Bir32courbes}, and has been improved in various ways. Two improvements in the framework of Aubry-Mather theory for twist maps will be relevant to us. The first states that if $M_{w_1}$ and $M_{w_2}$ are two Aubry-Mather sets for $F$ of rotation number $w_1$ and $w_2$ respectively, such that there is no non-contractible invariant circle between them, then there exists an orbit $\{z_n=(x_n,p_n)\}_{n\in\Z}\subset \T\times\R$ such that
\[
\alpha\;\text{-}\lim\,z_n\subseteq M_{w_1}\qquad\text{and}\qquad\omega\;\text{-}\lim\, z_n\subseteq M_{w_2}.
\]
The second states that if $(w_i)_{i\in\Z}$ are rotation numbers such that, for any $i$, there is no non-contractible invariant circle between the Aubry-Mather sets $M_{w_i}$ and $M_{w_{i+1}}$, then for every sequence $(\eps_i)_i$ of positive number there exists an orbit which visits in turn the $\eps_i$-neighborhood of $M_{w_i}$. Both these results are due to Mather, we refer to \cite{Mat91twist} for precise statements.

Of course, for a twist map, non-contractible invariant circles do represent obstructions to the drift in the $p$-variable, because they disconnect the cylinder, hence the previous statements are optimal. Therefore the principle stemming from these results is that non-contractible invariant circles are the only obstruction to this kind of instability. 

\medbreak
For a family of exact-symplectic twist maps on the cylinder, the generalization of the Birkhoff result above obtained by replacing in the statement ``non-contractible invariant circle'' with ``common non-contractible invariant circle'' is true. This and other stronger results have been proved by Moeckel, Le Calvez and Jaulent \cite{Moe02,LeC07,Jau}. Again, a common non-contractible invariant circle obviously is a real obstruction to the drift in the $p$-variable, whence the optimality of these results and the principle that, for a polysystem of exact twist maps, the common non-contractible invariant circles are the only obstruction to this kind of instability.

\medbreak
For the case of a single Hamiltonian in higher dimension, usually only sufficient conditions for the existence of unstable orbits can be proved. A great amount of work has been devoted to this topic. Our approach is close to the one of Mather in \cite{Mat93} and of Bernard in \cite{Ber08} (see also \cite{Ber02, CheYan04, CheYan09}). Their results are better expressed in terms of cohomology classes rather than rotation vectors: in their papers, the authors define equivalence relations in $H^1(M,\R)$ such that equivalence between classes implies existence of diffusing orbits between the corresponding Aubry sets. The obstruction for the equivalence is represented, roughly speaking, by the size of the Mañé sets. Notice however that, unlike the one-dimensional case, the obstructions for the equivalence may not always correspond to real obstructions for the dynamics. Nevertheless, if $d=1$ the obstructions to the equivalence turn out to be exactly the non-contractible invariant circles. Therefore, the results on twist maps mentioned above are recovered, and the equivalence relation is then optimal in this case.

\medbreak
The present paper has the same structure: we define (in terms of pseudographs and of the flows of the Hamiltonians in the family $\F$, see Sections \ref{pseudographs} and \ref{forcing definition}) an equivalence relation $\dashv\vdash_\F$ between cohomology classes, which is a natural adaptation to the polysystem case of the relation $\dashv\vdash$ introduced in \cite{Ber08}. We then prove that the occurrence of such a relation implies the existence of diffusing polyorbits, in the sense of Proposition \ref{diffusion intro}. We find sufficient conditions (in terms of the ``homological size'' of some sort of generalized Aubry sets) which ensure, locally around a given class $c$, the occurrence of the relation. If $d=1$, this conditions turn out to be also necessary, hence the relation is optimal in this case. For $\F$ composed by a single Hamiltonian, our results exactly reduce to the one in Section 8 of \cite{Ber08}.

More precisely, let $\F$ be a family of one-periodic Tonelli Hamiltonians on $T^*M$, where $M$ is a $d$-dimensional compact manifold without boundary. For $H\in\F$, we denote by
\[
\phi_H\colon T^*M\to T^*M
\]
the time-one map of the Hamiltonian flow of $H$. Let us first rigorously define what we mean by polyorbit.

\begin{definition}[$\F$-polyorbit]
\label{polyorbit}
A bi-infinite sequence $\{z_n\}_{n\in\Z}\subseteq T^*M$ is a \emph{$\F$-polyorbit} (or, simply, a polyorbit) if for every $n\in\Z$ there exists $H\in\F$ such that 
\[
\phi_{H}(z_n)=z_{n+1}.
\]

A \emph{finite $\F$-polyorbit} (or, simply, a finite polyorbit) is a finite segment $(z_0,z_1,\dots,z_N)$ of a $\F$-polyorbit. We then say that the finite polyorbit joins $z_0$ to $z_N$.

Given two subsets $S, S'\subseteq T^*M$, we say that $S$ is joined to $S'$ by a finite polyorbit if there exist a finite polyorbit joining $z$ to $z'$, for some $z\in S$ and $z'\in S'$.
\end{definition} 
Let $\Aa_H(c)$ and $\Mm_H(c)$ be the Aubry and Mather sets of $H$ of cohomology of $c$, as defined in \cite{Ber08} or \cite{Fat}. Their definition is also recalled in Subsection \ref{wkt}. 

We have (Section \ref{forcing definition}):
\begin{proposition}
\label{diffusion intro}
There exists an equivalence relation $\dashv\vdash_\F$ on $H^1(M,\R)$ such that:
\begin{itemize}
\item[-] if $c\dashv\vdash_\F c'$ then for every $H,H'\in\F$ there exists a polyorbit which is $\alpha$-asymptotic to the Aubry set $\Aa_H(c)$ and $\omega$-asymptotic to $\Aa_{H'}(c')$;
\item[-] if $c\dashv\vdash_\F c'$ and if $\eta,\eta'$ are one-forms of cohomology $c,c'$ respectively, then there exists a finite polyorbit joining $\mathrm{Graph}\,(\eta)$ to $\mathrm{Graph}\,(\eta')$;
\item[-] let $(c_i,H_i,\eps_i)_{i\in\Z}\subset H^1(M,\R)\times\F\times\left]0,+\infty\right[$ such that $c_i\dashv\vdash_\F c_{i+1}$ for every $i$. Then there exists a polyorbit visiting in turn the $\eps_i$-neighborhoods of the Mather sets $\Mm_{H_i}(c_i)$. Moreover, if $(c_i,H_i)=(\bar c,\bar H)$ for $i$ small enough (resp.\ $i$ big enough), then the polyorbit can be taken $\alpha$-asymptotic to $\Aa_{\bar H}(\bar c)$ \textup{(}resp.\ $\omega$-asymptotic to $\Aa_{\bar H}(\bar c)$\textup{)}.
\end{itemize}
\end{proposition}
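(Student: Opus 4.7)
The plan is to define the relation $\dashv\vdash_\F$ as in Bernard's paper, but with the semigroup of time-one maps replaced by the semigroup generated by $\{\phi_H : H \in \F\}$. Concretely, one defines a ``forcing'' preorder $\vdash_\F$ on cohomology classes by declaring $c \vdash_\F c'$ if, for every pseudograph $\G$ of cohomology $c$, there is a finite composition $\phi_{H_N}\circ\cdots\circ\phi_{H_1}$ of time-one maps from $\F$ whose image (in the pseudograph sense, using Lax--Oleinik type operators adapted to the family) lies above a pseudograph of cohomology $c'$; then $\dashv\vdash_\F$ is the symmetric/equivalence hull. The second bullet of the statement will then be essentially built into the definition: pseudographs of cohomology $c$ include graphs of smooth closed one-forms of cohomology $c$, and the pseudograph forcing mechanism produces (by a standard variational/selector argument, exactly as in Section~6 of \cite{Ber08}) finite polyorbits from $\mathrm{Graph}(\eta)$ to $\mathrm{Graph}(\eta')$.

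Once the second bullet is in place, the first bullet is obtained by a compactness/diagonal argument. Fix $H, H' \in \F$. Using recurrence of the Aubry sets, approximate $\Aa_H(c)$ and $\Aa_{H'}(c')$ by selecting points that are respectively backward and forward recurrent under $\phi_H$ and $\phi_{H'}$. Iterate $\phi_H$ backwards on a pseudograph supported near $\Aa_H(c)$ and $\phi_{H'}$ forwards on a pseudograph supported near $\Aa_{H'}(c')$, then splice in a finite connecting polyorbit from the previous step. A limiting procedure in the Hausdorff topology on $T^*M$ (using the compactness of $M$ and of orbits constrained to a fixed energy range by the Tonelli property) yields a bi-infinite polyorbit with the prescribed $\alpha$- and $\omega$-limit sets. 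The key additional ingredient for $\alpha$-asymptoticity to $\Aa_H(c)$ is the calibration property: a backward orbit of $\phi_H$ starting on a pseudograph that is $c$-minimizing automatically accumulates on $\Aa_H(c)$, and this transfers to polyorbits because the final connecting segment has bounded length.

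For the third bullet, concatenate the finite polyorbits produced by $c_i \dashv\vdash_\F c_{i+1}$ so that for each $i$ a prescribed intermediate iterate lies $\eps_i$-close to $\Mm_{H_i}(c_i)$. This uses the fact that, along the forcing chain from $c_i$ to $c_{i+1}$, one can freely insert an arbitrary number of $\phi_{H_i}$-iterations (because $c_i \vdash_\F c_i$ trivially), and by Mather's graph theorem/recurrence the long $\phi_{H_i}$-pseudo-orbit segment will necessarily pass near $\Mm_{H_i}(c_i)$. When the sequence stabilizes to $(\bar c, \bar H)$ at infinity, one extends the concatenation using infinitely many $\phi_{\bar H}$-iterates on a pseudograph supported near $\Aa_{\bar H}(\bar c)$ and takes the same type of Hausdorff limit as before.

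The main obstacle will be checking that the composition of pseudograph operators from different Hamiltonians in $\F$ retains the properties Bernard uses for a single Hamiltonian, in particular that the forcing relation is transitive (so that $\dashv\vdash_\F$ is genuinely an equivalence relation) and that calibration propagates through the alternation of different $\phi_H$'s. This is where one must carefully redefine the Lax--Oleinik semigroup and the associated pseudograph ordering to accommodate arbitrary finite words in $\F$; once this is set up cleanly in Section~\ref{forcing definition}, the three bullets follow by the variational/compactness scheme above.
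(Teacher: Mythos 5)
Your overall architecture matches the paper's: define $\vdash_\F$ through the semigroup generated by the time-one maps $\phi_H$, $H\in\F$, acting on pseudographs, symmetrize to get $\dashv\vdash_\F$, and then invoke weak KAM theory to produce the polyorbits. However, you omit the key technical lemma that makes the connection argument work, and you propose unnecessary machinery for the first bullet where the actual argument is a direct concatenation.

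The device you do not identify — and which is the workhorse of the whole proposition — is Proposition \ref{prop:connecting result}: if $c\vdash_\F c'$, then for any $\G\in\p_c$ and any $\breve\G\in\breve\p_{c'}$ there exists a finite polyorbit from $\G$ to $\breve\G$. The reason is that the forcing relation produces some $\G''\in\p_{c'}$ contained in the forward $\F$-orbit of $\G$, and then the overlapping/anti-overlapping structure guarantees $\G''\cap\breve\G\supseteq\G''\tilde\wedge\breve\G\neq\emptyset$; one picks $z''$ in this intersection. The second bullet is obtained by taking $\G=\mathrm{Graph}(\eta)$ and $\breve\G=\mathrm{Graph}(\eta')$, both of which lie in $\p\cap\breve\p$ because $\eta,\eta'$ are smooth. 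This is \emph{not} "built into the definition": the forcing relation only produces some semiconcave pseudograph of cohomology $c'$, and the intersection argument is exactly what lets you reach the prescribed target $\mathrm{Graph}(\eta')$. Your appeal to "a standard variational/selector argument, exactly as in Section~6" is too vague to stand in for this.

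For the first bullet, the Hausdorff-limit/diagonal scheme you sketch is both unneeded and risky: a Hausdorff limit of bi-infinite pseudo-orbits passing near $\Aa_H(c)$ need not have $\alpha$-limit contained in $\Aa_H(c)$. The correct argument is direct. Take $\G\in\p_c$ the pseudograph of a $c$-weak KAM solution for $H$ (semiconcave, hence overlapping) and $\breve\G\in\breve\p_{c'}$ the pseudograph of a dual $c'$-weak KAM solution for $H'$ (semiconvex, hence anti-overlapping). Proposition \ref{prop:connecting result} produces a finite polyorbit $(z_i)_{i=0}^N$ from $z_0\in\G$ to $z_N\in\breve\G$. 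By the weak KAM graph property (Proposition \ref{unstable}), $\phi_H^{-n}(z_0)$ accumulates on $\Aa_H(c)$ and $\phi_{H'}^n(z_N)$ on $\Aa_{H'}(c')$, so the extension $z_{-n}=\phi_H^{-n}(z_0)$, $z_{N+n}=\phi_{H'}^n(z_N)$ is already the desired bi-infinite polyorbit, no limiting procedure required. Incidentally, your phrase "a pseudograph supported near $\Aa_H(c)$" has no meaning, since pseudographs project onto all of $M$. The third bullet is proved in the paper by adapting Proposition~5.3(ii) of Bernard's paper rather than by the free-insertion-of-$\phi_{H_i}$-iterates mechanism you outline; your sketch there is too imprecise to assess, but it is at least thematically in the right direction.
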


The main result is Theorem \ref{theorem}. Let us state it here for finite $\F$, even if it will hold under a weaker assumption.

\begin{theorem}
\label{theorem intro}
Assume $\F$ is finite. Then for every $c\in H^1(M,\R)$ there exist a vector subspace $R(c)\subseteq H^1(M,\R)$, a neighborhood $W$ of $c$ and $\eps>0$ such that
\[
c'\ \dashv\vdash_\F\ c' +B_\eps R(c)\qquad\forall\,c'\in W.
\]
\end{theorem}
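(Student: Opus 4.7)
The strategy combines three ingredients: the definition of $R(c)$ as a span of admissible cohomological displacements produced by composition chains along $\F$, the fact that $\dashv\vdash_\F$ is an equivalence relation, and an openness/semi-continuity property in the cohomology parameter. Intuitively, $R(c)$ records exactly those directions in which, locally around $c$, one can push pseudographs by a sequence of Lax--Oleinik operators coming from Hamiltonians in the family while paying only a controlled cohomological cost. The theorem says that this same freedom is available, uniformly, at every base class $c'$ close to $c$.

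First I would unfold \eqref{R(c) definition}, using the equivalent formulations of Proposition \ref{equivalent expressions}, to see that a vector $\delta \in H^1(M,\R)$ lies in $R(c)$ precisely when arbitrarily small multiples of $\delta$ arise as cohomological shifts produced by finite chains of Lax--Oleinik operators of Hamiltonians in $\F$ applied to pseudographs with cohomology near $c$. Since $H^1(M,\R)$ is finite dimensional, pick a basis $\delta_1,\dots,\delta_k$ of $R(c)$ and, for each $i$, a scale $\eta_i > 0$ together with a chain of Hamiltonians in $\F$ realizing the forcing $c \dashv\vdash_\F c + t\delta_i$ for every $|t|\le\eta_i$. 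Using transitivity of $\dashv\vdash_\F$, concatenate these one-dimensional displacements:
\[
c \;\dashv\vdash_\F\; c + t_1\delta_1 \;\dashv\vdash_\F\; c + t_1\delta_1 + t_2\delta_2 \;\dashv\vdash_\F\; \cdots \;\dashv\vdash_\F\; c + \sum_{i=1}^k t_i\delta_i.
\]
Provided each intermediate step remains admissible, this yields $c \dashv\vdash_\F c + v$ for every $v$ in a small ball inside $R(c)$. Running the same scheme with $c'$ in place of $c$ will then give $c' \dashv\vdash_\F c' + B_\eps R(c)$, so the whole issue is to propagate each elementary displacement from $c$ to every $c'$ in a fixed neighborhood.

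The main obstacle is precisely this uniformity in $c'$: the objects involved (Lax--Oleinik semigroups, weak KAM subsolutions, pseudograph chains, generalized Aubry sets $\I_\Phi(\G)$) depend only upper semi-continuously on the cohomology, so nothing is a priori stable as the base class moves. The way around it is an openness property for the forcing relation between individual pseudographs, analogous to Proposition 0.10 of \cite{Ber08} in the single-Hamiltonian case: the finite chain of Lax--Oleinik operators producing $c \dashv\vdash_\F c + t\delta_i$ does not depend on the precise base class, and its cohomological output varies continuously, so it continues to realize a nearby forcing $c' \dashv\vdash_\F c' + t\delta_i$ whenever $c'$ is close enough to $c$ and $|t|$ is sufficiently small. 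Taking $W$ and $\eps$ as the worst of the $k$ neighborhoods and scales produced in this way yields the theorem; finiteness of $\F$ is used only to ensure that all the Lax--Oleinik operators appearing in the chains remain among finitely many, so that the openness estimates are uniform.
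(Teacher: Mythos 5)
Your outline captures the surface shape of the argument (realize displacements along directions in $R(c)$, concatenate via transitivity, then propagate to nearby $c'$), but it misses where the real difficulty sits, and that difficulty is not a technicality.

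The forcing relation $c \vdash_\F c'$ between cohomology classes means (by definition) that \emph{every} pseudograph $\G\in\p_c$ forces some $\G'\in\p_{c'}$. Correspondingly, $R(c)$ is defined in \eqref{R(c) definition} as an \emph{intersection} over all $\G\in\p_c$ of the sets $R(\G)$. Your argument picks a basis of $R(c)$ and, for each basis direction, a single chain of Lax--Oleinik operators ``realizing the forcing $c\dashv\vdash_\F c+t\delta_i$''. But each application of Lemma \ref{concatenate} only produces forcing starting from pseudographs in a \emph{neighborhood of one fixed} $\G$; to get forcing starting from all of $\p_c$, and with a uniform $N$ and $\eps$, one must handle the entire space $\p_c$ at once. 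Your phrase ``provided each intermediate step remains admissible'' is precisely the gap: nothing in your argument ensures that a chain found for one pseudograph works for all the others, nor that $N$ and $\eps$ can be taken uniform over $\p_c$ (which is not compact). The paper resolves this by exploiting the dynamics of the semigroup $\Sic$ on $\p_c$: first one proves the forcing near pseudographs lying in a \emph{minimal component} $\m$ (where Proposition \ref{equivalent expressions} gives $R(\m)=R_{A_1,\dots,A_n}(\G)$ for a single finite string, so one application of Lemma \ref{concatenate} suffices), then one pushes an arbitrary $\G\in\p_c$ into a minimal component via some $\Phi\in\Sic$, and finally one uses the relative compactness of $\Phi_{A_0}(\p_c)$ (a consequence of equi-semiconcavity) to extract a finite subcover and obtain uniform $N,\eps$ and a uniform neighborhood $W$ of $c$. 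None of this machinery appears in your proposal.

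Relatedly, you misidentify the role of finiteness of $\F$: it is not there to keep the number of Lax--Oleinik operators finite. It is there to guarantee that $\F$ is equi-semiconcave (Definition \ref{equi-semiconcave}), which in turn gives the compactness of $\sigma^\infty_c$ modulo constants (Proposition \ref{prop:compact}) and the existence of minimal components (Proposition \ref{minimal}). The full theorem in the paper is stated for equi-semiconcave $\F$, possibly infinite; finiteness is just a sufficient condition for equi-semiconcavity. Your appeal to an openness property ``analogous to Proposition 0.10 of \cite{Ber08}'' is the right instinct at the level of Lemma \ref{basic step} and Lemma \ref{concatenate}, but those lemmas give openness in the pseudograph variable, not directly in the cohomology variable; upgrading to openness in $c'$ is exactly what Step 3 of the paper's proof does via the compactness argument, and that step cannot be bypassed.
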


Of course one needs to have information on the subspace $R(c)$ for the result to be interesting. The definition of $R(c)$ is rather abstract and not too easy to handle (cf.\ the definition given in \eqref{R(c) definition} and some equivalent expressions given in Proposition \ref{equivalent expressions}).

Nevertheless, we are able to prove (Proposition \ref{R(c)=0}) that if there exists a $C^{1,1}$ weak Kam solution of cohomology $c$ which is common to all the Hamiltonians in $\F$, then $R(c)=\{0\}$. In addition, if $d=1$, the viceversa is true: if $R(c)=\{0\}$ then there exists a $C^{1,1}$ weak Kam solution of cohomology $c$ common to all Hamiltonians in $\F$, i.e.\ a common non-contractible invariant circle.

This fact, together with Theorem \ref{theorem intro} and Proposition \ref{diffusion intro} yields the following result for families of twist maps (no additional assumptions on $\F$ will be eventually needed):

\begin{corollary}
\label{d=1 intro}
Let us consider the polysystem associated to an arbitrary family $\F$ of one-periodic Tonelli Hamiltonians on $\T\times\R$. Let us make the identification $H^1(\T,\R)\cong\R$. If, for some $A<B\in\R$, the family $\F$ does not admit an invariant common circle with cohomology in $[A,B]$, then:
\begin{itemize}
\item[(i)] there exists a polyorbit $(x_n,p_n)_{n\in\Z}$ satisfying $p_0 =A$ and $p_N = B$ for some $N\in\N$;
\item[(ii)] for every $H,H'\in\F$ and every $c,c'\in[A,B]$ there exists a polyorbit $\alpha$-asymptotic to the Aubry set $\Aa_{H}(c)$ and $\omega$-asymptotic to $\Aa_{H'}(c')$;
\item[(iii)] for every sequence $(c_i,H_i,\eps_i)_{i\in\Z}\subset [A,B]\times\F\times\left]0,+\infty\right[$ there exists a polyorbit which visits in turn the $\eps_i$-neighborhoods of the Mather sets $\Mm_{H_i}(c_i)$.
\end{itemize}
\end{corollary}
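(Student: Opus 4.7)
The plan is to combine Theorem \ref{theorem intro}, Proposition \ref{R(c)=0} and Proposition \ref{diffusion intro} via a compactness argument on the interval $[A,B]$. First I would exploit the one-dimensional setting: since $d=1$, each $R(c)$ is a vector subspace of $H^1(\T,\R)\cong\R$ and hence equals either $\{0\}$ or $\R$. The hypothesis that no common invariant circle has cohomology in $[A,B]$, combined with the $d=1$ converse in Proposition \ref{R(c)=0}, forces $R(c)\neq\{0\}$ and therefore $R(c)=\R$ for every $c\in[A,B]$.

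Next, I would use these subspaces to propagate the equivalence $\dashv\vdash_\F$ across $[A,B]$. For each $c\in[A,B]$, Theorem \ref{theorem intro} supplies a neighborhood $W_c$ of $c$ and an $\eps_c>0$ such that $c'\dashv\vdash_\F c''$ whenever $c'\in W_c$ and $|c''-c'|<\eps_c$. I would replace $W_c$ by an open subinterval $V_c\subseteq W_c$ of diameter strictly less than $\eps_c$; then any two points of $V_c$ are $\dashv\vdash_\F$-equivalent. Compactness of $[A,B]$ yields a finite subcover $V_{c_1},\dots,V_{c_n}$, and connectedness of $[A,B]$ allows me to chain any two classes $c,c'\in[A,B]$ through a finite sequence whose consecutive terms share a common $V_{c_j}$. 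By transitivity of $\dashv\vdash_\F$, the entire interval $[A,B]$ lies in a single equivalence class.

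Items (ii) and (iii) then follow as direct instances of the first and third bullets of Proposition \ref{diffusion intro}. For (i), I would apply the second bullet to the classes $A$ and $B$ with the constant one-forms $\eta=A\,dx$ and $\eta'=B\,dx$, whose graphs in $T^*\T$ are precisely the horizontal circles $\T\times\{A\}$ and $\T\times\{B\}$; the resulting finite polyorbit joining them extends (by prolonging arbitrarily with any time-one map from $\F$) to a bi-infinite polyorbit $(x_n,p_n)_{n\in\Z}$ with $p_0=A$ and $p_N=B$.

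The argument is essentially a clean packaging of the abstract machinery already established in the paper, so I do not expect a serious obstacle. The only mildly delicate point is the compactness step, where one must commit to shrinking each $V_c$ below the corresponding $\eps_c$ \emph{before} extracting a finite subcover, since the radii $\eps_c$ produced by Theorem \ref{theorem intro} are not a priori uniform in $c$.
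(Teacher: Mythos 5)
Your argument handles the case of a \emph{finite} (or, more generally, equi-semiconcave) family $\F$ correctly, and in that regime it mirrors the paper's proof: combine Proposition \ref{R(c)=0} to get $R(c)=H^1(\T,\R)$ for every $c\in[A,B]$, chain the local conclusions of Theorem \ref{theorem intro} across the interval by compactness and connectedness, and then invoke Proposition \ref{diffusion intro}. The treatment of item (i) via the one-forms $A\,dx$ and $B\,dx$ is exactly the paper's device. Your careful remark about shrinking $W_c$ to $V_c$ of diameter below $\eps_c$ \emph{before} extracting the finite subcover is correct and corresponds to a step the paper leaves implicit.

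However, there is a genuine gap: the corollary is stated for an \emph{arbitrary} family $\F$, while the two ingredients you invoke both carry hypotheses on $\F$. Theorem \ref{theorem intro} is stated for finite $\F$ (more precisely, for equi-semiconcave $\F$, cf.\ Theorem \ref{theorem}), and Proposition \ref{R(c)=0} likewise assumes $\F$ equi-semiconcave. Indeed the quantity $R(c)$ is built from the semigroup $\Sigma^\infty_c$, whose compactness properties rest on equi-semiconcavity (Propositions \ref{prop:compact} and \ref{minimal}), so for an arbitrary $\F$ the object $R(c)$ need not even be well behaved. Your compactness argument is a compactness argument on the interval $[A,B]$, not on the family $\F$, and does not address this. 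The paper bridges this by a separate compactness argument: it shows that if $\F$ admits no common invariant circle with cohomology in $[A,B]$, then some \emph{finite} subfamily $\F'\subseteq\F$ already has this property, by proving that the sets
\[
C(\F')=\bigcap_{H\in\F'\cup\{H_0\}}\bigl(\{\G:\Phi_{A_H}(\G)=\G\}\cap\{\G:\breve\Phi_{A_H}(\G)=\G\}\bigr)\cap\p_{[A,B]}
\]
are compact (using that they sit inside the relatively compact set $\Phi_{A_{H_0}}(\p_{[A,B]})$) and have the finite intersection property, so that if all of them were nonempty the whole family would share a common invariant circle. Your proof needs this reduction to be complete; without it, the conclusion only holds under the additional hypothesis that $\F$ is finite or equi-semiconcave.
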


When $d>1$ some information can still be extracted from the subspace $R(c)$. A sample of what can be obtained is presented in Proposition \ref{sample}.  Roughly speaking, among the obstructions which prevent $R(c)$ from being large, we find:
\begin{itemize}
\item[-] for every finite string $H_1,\dots,H_n$ of elements of $\F$, the invariant sets for the map
\[
\phi=\phi_{H_n}\circ\dots\circ\phi_{H_1};
\]
\item[-] for every pair $H_1,H_2$ of elements of $\F$, for every $c$-weak Kam solution $u_1$ for $H_1$ and dual $c$-weak Kam solution $u_2$ for $H_2$, the set
\[
\mathrm{Graph}\, (du_1)\,\cap\,\mathrm{Graph}\, (du_2).
\]
\end{itemize}
However, unlike the twist map case, such obstructions must be intended in a ``negative'' way: their smallness is a sufficient condition for $R(c)$ to be large, the converse being not necessarily true.

\subsection{Structure of the paper}

The paper is organized as follows. In Section \ref{pseudographs} we establish some notation and recall some facts about pseudographs and semiconcave functions. 

In Section \ref{forcing definition} we define the forcing relation $\vdash_\F$ and the mutual forcing relation $\dashv\vdash_\F$, and we show, like in \cite{Ber08}, how the occurrence of such relations implies the diffusion for the polysystem (Proposition \ref{diffusion}).

In Section \ref{section 4} we present the objects needed later to put in place what we call the Mather mechanism: Lagrangian action, Lax-Oleinik operators, operations on costs (minimum, composition) and families of costs. Eventually we build the semigroup $\Sic$ which acts on the space of pseudographs and encodes informations on the underlying polysystem dynamics. The Subsection \ref{wkt} gathers some needed results in weak Kam theory, rephrased in the language of pseudographs.

In Section \ref{mather mechanism} the Mather mechanism for the construction of diffusion polyorbits is put in place. The basic step of the mechanism is proved in Subsection \ref{basic step}. Then we heuristically show the application to the twist map case in Subsection \ref{heuristic twist maps}. Finally, in Subsection \ref{section theorem} we define the subspace $R(c)$ and we prove a general abstract result (Theorem \ref{theorem}) which gives sufficient conditions for the occurrence of the relation $\dashv\vdash_\F$ in terms of $R(c)$. We subsequently apply the result to some special cases (such as twist maps and commuting Hamiltonians), and we discuss the properties of $R(c)$ in relation with the dynamics of the polysystem.


\section{Notation. The space of pseudographs}

\label{pseudographs}

In this section we recall from \cite{Ber08} some facts about pseudographs. We refer to that article for a more detailed introduction. 

Let $M$ be a $d$-dimensional compact connected manifold without boundary. We denote by $\Omega$ the set of smooth closed one-forms on $M$ and by $\pi$ the projection from the cotangent space $T^*M$ to $M$. If $\eta\in\Omega$ we denote by $[\eta]\in H^1(M,\R)$ its cohomology class and, for $S\subseteq\Omega$, $[S]=\{[\eta]:\eta\in S\}$.

If $u\colon M\to\R$ is a Lipschitz function and $\eta\in\Omega$, then the \emph{pseudograph} $\G_{\eta,u}\subset T^*M$ is defined by 
\[
\G_{\eta,u}=\left\{ (x,\eta_x+du_x):x\in M \text{ and } du_x \text{ exists}\right\}.
\]
Let us call $E$ the set of pseudographs:
\[
E=\left\{ \G_{\eta,u}:\eta\in\Omega,u\in {\rm Lip} (M)   \right\}.
\]
Note that 
\[
\G_{\eta,u}=\G_{\eta+df,u-f}
\]
for any smooth function $f\colon M\to\R$. Viceversa, if $\G_{\eta,u}=\G_{\eta',u'}$ then, setting $f=u-u'$, it is not difficult to check that $f$ is smooth,  
$\eta'=\eta+df$ and $u'=u-f$. In particular, the cohomologies of $\eta$ and $\eta'$ are equal. Thus the cohomology of a pseudograph $\G$ is well defined, and we denote it by $c(\G)$. If $\G=\G_{\eta,u}$ for some $\eta$ and $u$, then
\[
c(\G)=[\eta]\in H^1(M,\R).
\]
It is not difficult to see that $E$ is a vector space. In fact, it may be regarded as a quotient of $\Omega\times  {\rm Lip} (M)$ by the subspace $\{(\eta,u):\eta=-du\}=\{(\eta,u):\G_{\eta,u}=\G_{0,0}\}$. The operations of sum and scalar multiplication are explicitly given by
\begin{equation*}
\G_{\eta,u}+\G_{\nu,v}=\G_{\eta+\nu,u+v}\ ,\qquad\qquad\lambda\,\G_{\eta,u}=\G_{\lambda\eta,\lambda u}\qquad\text{for}\ \lambda\in\R
\end{equation*}
(this does not depend on the chosen representatives $(\eta,u)$ and $(\nu,v)$).

We have the following identification of vector spaces, which will be extensively used throughout the paper:
\[
E\ \cong\ H^1(M,\R)\times \bigl({\rm Lip}(M)/\sim\bigr)
\]
where the relation $\sim$ means up to the addition of constants. Given a linear section $S\colon H^1(M,\R)\to \Omega$ (i.e.\ $[S(c)]=c$), an isomorphism performing the above identification is given by
\begin{align*}
H^1(M,\R)\times \bigl({\rm Lip}(M)/\sim\bigr) & \to  E
\\
(c,u)\ \hphantom{{\rm Lip}(M)/\sim} & \mapsto  \G_{S(c),u}.
\end{align*}
The space $E$ can be given a norm via the formula
\[
\|\G_{S(c),u}\|=\| c\|_{H^1}+|u|,
\]
where $|u|$ denotes half the oscillation of $u$, i.e.\ $|u|=(\max u - \min u)/2$. Changing the section $S$ or the norm $\|\cdot\|_{H^1}$ gives rise to an equivalent norm. In the rest of the paper, $S$ and $\|\cdot\|_{H^1}$ will be considered as fixed. Everything will be well-defined regardless of this choice. With a little abuse of language, we will often write $c$ in place of $S(c)$, for instance $\G_{c,u}$ in place of $\G_{S(c),u}$.

We will be mostly concerned with a proper subset of $E$, namely
\[
\p=\left\{ \G_{c,u}:c\in H^1(M,\R), u\colon M\to\R \text{ semiconcave }\right\}.
\]
Here and throughout the paper, the term `semiconcave' stands for the more accurate expression `semiconcave with linear modulus'. Some basic properties of semiconcave functions are reviewed in Subsection \ref{sc}. 

Every $\G\in\p$ is called an \emph{overlapping pseudograph} (the motivation behind this terminology is given in \cite[Section 2.9]{Ber08}). The set $\p$ is closed under sum and multiplication by a positive scalar, but not under difference or multiplication by a negative scalar. In fact, the dual set $\breve\p$ of \emph{anti-overlapping pseudographs} is defined as
\[
\breve\p=-\p=\left\{ \G_{c,u}: c\in H^1(M,\R), u\colon M\to\R \text{ semiconvex } \right\}.
\]
If $c\in H^1(M,\R)$ and $C\subseteq H^1(M,\R)$, the symbols $\p_c$ and $\p_C$ stand for
\[
\p_c=\{\G\in\p:c(\G)=c\},\qquad\p_C=\bigcup_{c\in C}\p_c.
\]
and analogously for $\breve\p_c$ and $\breve\p_C$.
\medbreak

Given a subset $N\subset M$ and a pseudograph $\G$, we denote by $\G_{|N}$ the restriction of $\G$ above $N$, that is $\G\cap \pi^{-1}(N)$.

Given $\G=\G_{c,u}\in\p_c$ and $\breve\G=\G_{c,v}\in\breve\p_c$ (with the same $c$) the set
\[
\G\wedge\breve\G\subseteq M
\]
is defined as the set of the points of minimum of the difference $u-v$. This is a non-empty compact set because $M$ is compact. Moreover, the semiconcavity of both $u$ and $-v$ implies the following property: for every $x$ in $\G\wedge\breve\G$ both $du_x$ and $dv_x$ exist, and they coincide. As a consequence, for any $c$ and any couple $(\G,\breve\G)\in\p_c\times\breve\p_c$, the following definition yields a non-empty subset of $T^*M$:
\[
\G\tilde\wedge\breve\G:=\G_{|\G\wedge\breve\G}=\breve\G_{|\G\wedge\breve\G}= \G\cap\breve\G\cap\pi^{-1}(\G\wedge\breve\G)\subseteq \G\cap\breve\G
\]
and the last inclusion may be strict in general. The set $\G\tilde\wedge\breve\G$ is compact and is a Lipschitz graph over its projection $\G\wedge\breve\G$, by properties of semiconcave functions.

Finally, let us observe that $\Omega$ can be naturally regarded as a subset of $\p\cap\breve\p$. The inclusion is given by $\eta\mapsto\G_{\eta,0}=\mathrm{Graph}\,(\eta)$.

\subsection{Semiconcave functions}

\label{sc}
Let us make a brief digression about semiconcave functions. Recall that for us `semiconcave' means `semiconcave with linear modulus'. We refer to \cite{CanSin04} for a comprehensive exposition in the Euclidean case. On a manifold, the notion of semiconcavity is still meaningful, but the one of semiconcavity constant is chart-dependent. Nevertheless, by taking a finite atlas as shown in \cite[Appendix 1]{Ber08} it is still possible to give meaning to the expression ``$u$ is $C$-semiconcave'' for a real-valued function defined on a compact manifold and $C\in\R$. Hence we can define the best semiconcavity constant of $u$ as
\[
sc(u)=\inf\{ C\in\R: u \text{ is $C$-semiconcave} \}.
\]
It will depend on the particular finite atlas, but this choice will not affect our results. 

We now recall some properties which are well-known in the Euclidean case and which hold true in the manifold case as well. We refer to \cite[Appendix 1]{Ber08} for a more detailed exposition.

We have
\begin{equation}
\label{sc min}
sc\,(\inf_\lambda\{u_\lambda\})\le\sup_\lambda\{sc(u_\lambda)\},
\end{equation}
for any family of functions $\{u_\lambda\}_\lambda$, provided that the infimum is finite. Moreover, if $u_n$ converges uniformly to $u$, then
\begin{equation}
\label{sc liminf}
sc(u)\le\liminf sc(u_n).
\end{equation}
A semiconcave function is differentiable at every point of local minimum (and the differential is $0$). We also have: if $u$ and $v$ are semiconcave and if $x$ is a point of local minimum of $u+v$, then both $u$ and $v$ are differentiable at $x$, and $du_x+dv_x=0$.

A family of functions $\{u_\lambda\}_\lambda$ is called equi-semiconcave if $sc(u_\lambda)\le C$ for some constant $C$ independent of $\lambda$. We will use a lot the following fact: a family of equi-semiconcave functions is equi-Lipschitz (this follows for instance by adapting Theorem 2.1.7 and Remark 2.1.8 in \cite{CanSin04} to the case of a compact manifold).

Finally, the set of semiconcave functions is closed under sum and multiplication by a positive scalar. A function $u$ such that $-u$ is semiconcave is called semiconvex. A function is both semiconcave and semiconvex if and only if it is $C^{1,1}$.


\section{The forcing relation and diffusion polyorbits}
\label{forcing definition}

Let $\F$ be an arbitrary family of one-periodic Tonelli Hamiltonians on $M$. Let us recall that a one-periodic Tonelli Hamiltonian on $M$ is a $C^2$ function
\begin{align*}
H\colon T^*M\times\T&\to\R
\\
(x,p,t)&\mapsto H(x,p,t)
\end{align*}
which is strictly convex and superlinear in $p$ (for any fixed $x$ and $t$) and whose Hamiltonian flow is complete. Our goal is to prove existence of diffusion polyorbits for the family $\F$, in the sense discussed in the Introduction. In this section, we first adapt to the polysystem framework the notion of forcing relation, which was introduced in \cite{Ber08} for the case of a single Hamiltonian. Then we show (Proposition \ref{diffusion}) how this relation implies the diffusion: roughly speaking, if the cohomology class $c$ forces the class $c'$, then there will exist diffusion polyorbits from the cohomology $c$ to the cohomology $c'$, in a sense which will be made precise in the proposition. The aim of the later sections will then be to give sufficient conditions for the forcing relation to occur between two cohomology classes.
\medbreak

Let us recall that we denote by $\phi_H\colon T^*M\to T^*M$ the time-one map of a Tonelli Hamiltonian $H$. We define $\phi_\F$ of a subset $S\subseteq T^*M$ as follows:
\[
\phi_\F(S)=\bigcup_{H\in\F} \phi_H(S),
\]
and we recursively define $\phi_\F^{n+1}(S)=\phi_\F(\phi^n_\F(S))$. Given two subsets $S$ and $S'$ of $T^*M$, we write
\[
S\vdash_{N,\F}S'\quad\overset{\text{def}}{\Longleftrightarrow}\quad S'\subseteq\bigcup_{n=0}^N \phi^n_\F(S).
\]
We write $S\vdash_\F S'$, and we say that $S$ forces $S'$, if $S\vdash_{N,\F}S'$ for some $N\in\N$. Note that, for $z,z'\in T^*M$,
\begin{equation*}
\{z\}\vdash_{\F} \{z'\}\ \Leftrightarrow\ \text{there exists a finite $\F$-polyorbit joining $z$ to $z'$}.
\end{equation*}
(We shall often write $z  \vdash_{\F} z'$ to lighten notations.) In fact, we will mainly interested to the case in which $S=\G$ and $S'=\G'$ are two pseudographs in $\p$. Let us make explicit that
\begin{align*}
\G\vdash_{\F} \G'\ \Leftrightarrow\ &\text{for every $z'\in\G'$ there exists $z\in\G$ and a finite $\F$-polyorbit} 
\\ 
&\text{joining $z$ to $z'$ (with an uniform bound on the length of the polyorbit)}.
\end{align*}
We are now going to extend the definition of $\vdash_\F$ to cohomology classes. If $\W$ and $\W'$ are two subsets of $\p$, we write
\[
\W\ \vdash_{N,\F}\ \W'\quad\overset{\text{def}}{\Longleftrightarrow}\quad \forall\,\G\in\W\quad \exists\,\G'\in\W':\G\vdash_{N,\F}\G'.
\]
We write $\W\vdash_\F \W'$, and we say that $\W$ forces $\W'$, if $\W\vdash_{N,\F}\W'$ for some $N\in\N$. If $\W=\p_c$ or $\W=\p_C$, for some $c\in H^1(M,\R)$ or $C\subseteq H^1(M,\R)$, we simply write $c$ or $C$ in place of $\p_c$ or $\p_C$. Similarly for $\W'=\p_c$. So, for instance, if $c$ and $c'$ are two cohomology classes, the relation
\[
c\vdash_{N,\F} c'
\]
means that for every $\G\in\p_c$ there exists $\G'\in\p_{c'}$ such that $\G\vdash_{N,\F} \G'$.

The relation $\vdash_\F$ is reflexive and transitive (between subsets of $T^*M$ as well as between subsets of $\p$, and in particular between cohomology classes as well). In the sequel, it will be useful to consider the symmetrized relation $\dashv\vdash_\F$ on the cohomology classes defined by
\[
c\dashv\vdash_\F c' \qquad\overset{\textrm{def}}{\Longleftrightarrow}\qquad c\vdash_\F c'\quad\text{and}\quad c'\vdash_\F c.
\]
If $c\dashv\vdash_\F c'$, we say that $c$ and $c'$ \emph{force each other}. The following fact follows directly from the definitions.

\begin{proposition}
The relation $\dashv\vdash_\F$ is an equivalence relation on $H^1(M,\R)$.
\end{proposition}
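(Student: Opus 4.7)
The statement is essentially a bookkeeping consequence of the reflexivity and transitivity of $\vdash_\F$ already asserted in the preceding paragraph, combined with the manifestly symmetric way in which $\dashv\vdash_\F$ is defined. So the plan is to unpack the three properties one by one; the main (very mild) task is to justify transitivity, which is the only step with any content.

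\textbf{Symmetry.} This is immediate from the defining formula $c\dashv\vdash_\F c' \Leftrightarrow (c\vdash_\F c' \text{ and } c'\vdash_\F c)$: swapping the two conjuncts gives exactly $c'\dashv\vdash_\F c$.

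\textbf{Reflexivity.} I would argue that $c\vdash_\F c$ for every $c\in H^1(M,\R)$. Given $\G\in\p_c$, take $\G'=\G\in\p_c$ and $N=0$. Since $\phi^0_\F(\G)=\G$ by convention, the inclusion $\G'\subseteq\bigcup_{n=0}^{0}\phi^n_\F(\G)$ holds trivially, so $\G\vdash_{0,\F}\G'$. Hence $c\vdash_\F c$, and therefore $c\dashv\vdash_\F c$.

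\textbf{Transitivity.} Suppose $c\dashv\vdash_\F c'$ and $c'\dashv\vdash_\F c''$. From the four forcings thus available, I want to deduce $c\vdash_\F c''$ and $c''\vdash_\F c$. For the first, fix $\G\in\p_c$. By $c\vdash_\F c'$ there is an integer $N$ and a pseudograph $\G'\in\p_{c'}$ with $\G'\subseteq \bigcup_{n=0}^{N}\phi^n_\F(\G)$; by $c'\vdash_\F c''$ applied to $\G'$ there is an integer $N'$ and $\G''\in\p_{c''}$ with $\G''\subseteq\bigcup_{m=0}^{N'}\phi^m_\F(\G')$. Using the semigroup property $\phi^m_\F\circ\phi^n_\F\subseteq\phi^{m+n}_\F$ (which is clear from the definition $\phi^{k+1}_\F(S)=\phi_\F(\phi^k_\F(S))$ and the union-preservation of each $\phi_H$), I conclude $\G''\subseteq\bigcup_{k=0}^{N+N'}\phi^k_\F(\G)$, i.e.\ $\G\vdash_{N+N',\F}\G''$. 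Hence $c\vdash_\F c''$. The symmetric argument gives $c''\vdash_\F c$, so $c\dashv\vdash_\F c''$.

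There is no genuine obstacle here; if anything, the one detail worth flagging is the semigroup inclusion $\phi^m_\F\circ\phi^n_\F(\G)\subseteq\phi^{m+n}_\F(\G)$, which underpins transitivity of $\vdash_\F$ and which the author likely regards as absorbed into the earlier assertion that $\vdash_\F$ is transitive.
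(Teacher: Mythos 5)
Your proof is correct and follows the same route the paper intends: the paper states (just before the proposition) that $\vdash_\F$ is reflexive and transitive, and then simply remarks that $\dashv\vdash_\F$ being an equivalence relation "follows directly from the definitions"; you have spelled out the standard verification (symmetrization of a preorder is an equivalence relation), including the small but worthwhile observation that transitivity of $\vdash_{N,\F}$ comes from $\phi_\F^m\circ\phi_\F^n=\phi_\F^{m+n}$ together with monotonicity of $\phi_\F$, which lets the two bounds $N$ and $N'$ be added.
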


We also have:

\begin{proposition}
\label{prop:connecting result}
Let $c\vdash_\F c'$.\ Then for any $\G\in\p_c$ and any $\G'\in\breve\p_{c'}$ there exists a finite polyorbit joining $\G$ to $\G'$.
\end{proposition}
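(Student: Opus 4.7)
The statement is a direct consequence of the definition of $\vdash_\F$ on cohomology classes, combined with the non-emptiness of the intersection $\G_1\tilde\wedge\G_2$ for pairs $(\G_1,\G_2)\in\p_c\times\breve\p_c$ already established in Section \ref{pseudographs}.

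The plan is as follows. Start with $\G\in\p_c$ arbitrary. Unwinding the definition of $c\vdash_\F c'$ applied to this particular $\G$, there exist $N\in\N$ and an overlapping pseudograph $\G''\in\p_{c'}$ such that $\G\vdash_{N,\F}\G''$, meaning $\G''\subseteq\bigcup_{n=0}^N\phi^n_\F(\G)$. In particular, every point of $\G''$ is the endpoint of a finite $\F$-polyorbit that originates in $\G$.

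Now bring in the given anti-overlapping pseudograph $\G'\in\breve\p_{c'}$. Since $\G''\in\p_{c'}$ and $\G'\in\breve\p_{c'}$ share the same cohomology $c'$, the construction recalled in Section \ref{pseudographs} produces the non-empty compact set $\G''\tilde\wedge\G'\subseteq T^*M$, contained in $\G''\cap\G'$. Pick any $z^*\in\G''\tilde\wedge\G'$. On the one hand $z^*\in\G''$, so by the previous paragraph there exist $z\in\G$, an integer $n\le N$, and a finite $\F$-polyorbit $(z_0,z_1,\ldots,z_n)$ with $z_0=z$ and $z_n=z^*$. On the other hand $z^*\in\G'$. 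Hence this polyorbit joins $\G$ to $\G'$, as required.

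I do not expect a real obstacle here: the one substantive ingredient is the non-emptiness of $\G''\tilde\wedge\G'$, which follows from the semiconcave/semiconvex structure and the fact that $M$ is compact, and this has already been recorded in the preceding section. The only thing worth flagging is that the definition of $\vdash_\F$ between subsets of $\p$ is \emph{existential} in the target pseudograph (for every $\G$ there \emph{exists} some $\G''$ forced by it), which is exactly why we must use the freedom to choose $\G''$ and cannot directly ask $\G\vdash_\F\G'$ for the given $\G'$.
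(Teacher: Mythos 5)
Your proof is correct and follows essentially the same route as the paper's: extract $\G''\in\p_{c'}$ with $\G\vdash_\F\G''$ from the definition, then use the non-emptiness of $\G''\tilde\wedge\G'\subseteq\G''\cap\G'$ (established in Section \ref{pseudographs}) to pick an endpoint in both sets. The only difference is presentational: you spell out the definitional unwinding in slightly more detail.
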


\begin{proof}
Let us fix $\G\in\p_c$ and $\G'\in\breve\p_{c'}$. Since $c\vdash_\F c'$, there exists $\G''\in\p_{c'}$ such that $\G\vdash_\F \G''$, which means that for every $z''\in\G''$ there exists a finite polyorbit joining $\G$ to $z''$. Note that the intersection $\G'\cap\G''$ is not empty, because it contains the non-empty set $\G'\tilde\wedge\G''$ (see Section \ref{pseudographs}). Taking $z''$ in this intersection, we get a finite polyorbit joining $\G$ to $\G'$.
\end{proof}

We can now restate and prove Proposition \ref{diffusion intro} about the existence of diffusion polyorbits. The proof is essentially the same as in \cite[Proposition 5.3]{Ber08}.

\begin{proposition}
\label{diffusion}
\mbox{}
\begin{itemize}
\item[1.]
Let $c\vdash_\F c'$. Let $H,H'\in \F$ and $\eta,\eta'$ be two smooth closed one-forms of cohomology $c$ and $c'$ respectively. Then:
\begin{itemize}
\item[(i)] there exists a polyorbit which is $\alpha$-asymptotic to $\Aa_{H}(c)$ and $\omega$-asymptotic to $\Aa_{H'}(c')$;
\item[(ii)] there exists a finite polyorbit joining $\mathrm{Graph}\,(\eta)$ to $\mathrm{Graph}\,(\eta')$;
\item[(iii)] there exists a polyorbit $(z_n)_{n\in\Z}$ which satisfies $z_0\in\mathrm{Graph}\,(\eta)$ and is $\omega$-asymptotic to $\Aa_{H'}(c')$;
\item[(iv)] there exists a polyorbit $(z_n)_{n\in\Z}$ which is $\alpha$-asymptotic to $\Aa_H(c)$ and satisfies $z_0\in\mathrm{Graph}\,(\eta')$.
\end{itemize}
\item[2.] Let
\[
(c_i,H_i,\eps_i)_{i\in\Z}\ \subseteq\  H^1(M,\R)\times\F\times\left]0,+\infty\right[
\]
such that $c_i\vdash_\F c_{i+1}$. Then there exists a polyorbit $(z_n)_{n\in\Z}$ which visits in turn the $\eps_i$-neighborhoods of the Mather sets $\Mm_{H_i}(c_i)$. Moreover, if $(c_i,H_i)=(\bar c,\bar H)$ for $-i$ large enough (resp.\ $i$ big enough), then the polyorbit can be taken $\alpha$-asymptotic to $\Aa_{\bar H}(\bar c)$ \textup{(}resp.\ $\omega$-asymptotic to $\Aa_{\bar H}(\bar c)$\textup{)}.
\end{itemize}
\end{proposition}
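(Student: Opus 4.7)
The plan is to reduce each of the assertions in both parts to Proposition~\ref{prop:connecting result}, by carefully choosing the overlapping pseudograph $\G\in\p_c$ and the anti-overlapping pseudograph $\breve\G'\in\breve\p_{c'}$, and then to extend the resulting finite polyorbit into a bi-infinite one by iterating the relevant time-one maps on weak KAM pseudographs, whose dynamics are well understood via Subsection~\ref{wkt}. This is a faithful translation of the proof of \cite[Proposition 5.3]{Ber08} from the single-Hamiltonian case to the polysystem setting; the only new ingredient is that the connecting trajectories are now $\F$-polyorbits, which is exactly the content of Proposition~\ref{prop:connecting result}.

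\textbf{Part 1.} For item (i), I would pick a negative weak KAM solution $u_{-}$ of cohomology $c$ for $H$ and a positive weak KAM solution $v_{+}$ of cohomology $c'$ for $H'$, and set $\G:=\G_{c,u_{-}}\in\p_{c}$ and $\breve\G':=\G_{c',v_{+}}\in\breve\p_{c'}$. Weak KAM theory provides two dual dynamical properties: the backward $\phi_{H}$-orbit of every $z\in\G$ stays in $\G$ and has $\alpha$-limit contained in $\Aa_{H}(c)$; symmetrically, the forward $\phi_{H'}$-orbit of every $z'\in\breve\G'$ stays in $\breve\G'$ and has $\omega$-limit contained in $\Aa_{H'}(c')$. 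Proposition~\ref{prop:connecting result} then supplies a finite $\F$-polyorbit from some $z_0\in\G$ to some $z_N\in\breve\G'$; prepending the backward $\phi_{H}$-orbit of $z_0$ and appending the forward $\phi_{H'}$-orbit of $z_N$ produces the bi-infinite polyorbit claimed in (i). Items (ii)--(iv) follow from the exact same recipe, replacing the weak KAM graph on the relevant side by $\mathrm{Graph}\,(\eta)\in\p_{c}\cap\breve\p_{c}$ and/or $\mathrm{Graph}\,(\eta')\in\p_{c'}\cap\breve\p_{c'}$: on any side where a smooth graph is used, no extension is needed because the endpoint of the finite polyorbit already lies on the prescribed graph.

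\textbf{Part 2.} For the sequence version, I would iterate the previous construction. For each $i$, I use $c_i\vdash_\F c_{i+1}$ together with Proposition~\ref{prop:connecting result} to obtain a finite polyorbit segment $P_i$ joining a suitable overlapping pseudograph in $\p_{c_i}$ to $\breve\G'_{i+1}:=\G_{c_{i+1},v_{+,i+1}}\in\breve\p_{c_{i+1}}$, where $v_{+,i+1}$ is a positive weak KAM solution for $H_{i+1}$. The endpoint of $P_i$ then lies in an anti-overlapping pseudograph whose forward $\phi_{H_{i+1}}$-iterates accumulate on $\Aa_{H_{i+1}}(c_{i+1})$; inserting enough $\phi_{H_{i+1}}$-iterates between $P_i$ and $P_{i+1}$ drives the trajectory into the $\eps_{i+1}$-neighborhood of $\Mm_{H_{i+1}}(c_{i+1})$, using the recurrence of $c_{i+1}$-minimizing measures supported on the Mather set. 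Concatenating these segments yields a long finite polyorbit; on eventually constant tails $(c_i,H_i)=(\bar c,\bar H)$, the asymptotic statement is obtained as in Part 1 by appending $\phi_{\bar H}$-orbits, and a diagonal/compactness extraction converts the family of increasingly long concatenations into a genuine bi-infinite polyorbit.

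\textbf{Main obstacle.} The delicate point, as in \cite{Ber08}, is the quantitative passage within $\eps_i$ of each Mather set rather than merely the surrounding Aubry set; this step requires combining the uniform bound on the length of each forcing polyorbit, equi-Lipschitz regularity of weak KAM solutions along the construction, and recurrence of minimizing invariant measures on the Mather set. All remaining steps are a direct adaptation of \cite[Proposition 5.3]{Ber08} from orbits to polyorbits, made possible by Proposition~\ref{prop:connecting result}.
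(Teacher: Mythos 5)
Your proposal is correct and follows essentially the same route as the paper: for Part 1 you pick a (backward) weak KAM solution for $H$ as the overlapping pseudograph and a dual (forward) weak KAM solution for $H'$ as the anti-overlapping one, apply Proposition~\ref{prop:connecting result}, and then extend the resulting finite polyorbit using the backward/forward invariance and asymptotic properties of weak KAM pseudographs (Propositions~\ref{wks&dual} and~\ref{unstable}), while items (ii)--(iv) substitute the smooth graphs $\mathrm{Graph}\,(\eta),\mathrm{Graph}\,(\eta')\in\p\cap\breve\p$ on the appropriate side. For Part 2 the paper simply defers to \cite[Proposition 5.3(ii)]{Ber08}, and your sketch (chaining forcing segments through weak KAM pseudographs, inserting enough time-one iterates of $H_{i+1}$ to approach $\Mm_{H_{i+1}}(c_{i+1})$, then extracting a bi-infinite polyorbit) is the natural adaptation the paper has in mind.
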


\begin{proof}[Proof of 1]
The proof of any one of the four statements relies on a suitable application of Proposition \ref{prop:connecting result}. 

Let us start with $(i)$. As it is well-known, there exist a $c$-weak Kam solution $u$ for $H$ and a dual $c'$-weak Kam solution $u'$ for $H'$. It is also known that $u$ is semiconcave and $u'$ is semiconvex. Let us consider the associated pseudographs $\G=\G_{c,u}\in\p_c$ and $\G'=\G_{c',u'}\in\breve\p_c$.\ By Proposition \ref{prop:connecting result}, there exists a finite polyorbit $(z_i)_{i=0}^N$ joining $\G$ to $\G'$. Moreover, by a general property of weak Kam solutions (see \cite[Proposition 4.3]{Ber08}), every point in $\G$ is $\alpha$-asymptotic for the flow of $H$ to $\Aa_H(c)$ and every point in $\G'$ is $\omega$-asymptotic for the flow of $H'$ to $\Aa_{H'}(c')$. This implies that we can extend the finite polyorbit $(z_i)_{i=0}^N$ to a bi-infinite one satisfying the requirements in $(i)$.

For $(ii)$, let us consider $\G=\G_{\eta,0}$ and $\G'=\G_{\eta',0}$. Since $\eta$ and $\eta'$ are smooth, both $\G$ and $\G'$ belong to $\p_c\cap\breve\p_c$. Hence the Proposition \ref{prop:connecting result} immediately yields a finite polyorbit joining $\G=\mathrm{Graph}\,(\eta)$ to $\G'=\mathrm{Graph}\,(\eta')$.

The proof of statements $(iii)$ and $(iv)$ is similar.
\renewcommand{\qedsymbol}{}
\end{proof}

\begin{proof}[Proof of 2]
It is a natural adaptation of the proof in \cite[Proposition 5.3\,(ii)]{Ber08}.
\end{proof}


\section{Lagrangian action and Lax-Oleinik operators}
\label{section 4}

In this section we introduce the objects needed to put in place, in Section \ref{mather mechanism}, the Mather mechanism for the construction of diffusion polyorbits. For this aim, it is more convenient to adopt the Lagrangian point of view on the dynamics rather than the Hamiltonian one.

Let us quickly recall some basic facts about the Lagrangian point of view: to any one-periodic Tonelli Hamiltonian $H\colon T^*M\times\T\to\R$ one can associate a one-periodic Lagrangian $L\colon TM\times\T\to\R$ via the Fenchel-Legendre transform, i.e.\ 
\[
L(x,v,t)=\sup_{p\in T_x^*M} \left\{   p(v)-H(x,p,t)\right\},\qquad x\in M,\,v\in T_x M,\,t\in\T.
\]
The Lagrangian $L$ turns out to be Tonelli as well, in the sense that it is $C^2$,  it is strictly convex in $v$ (with positive definite Hessian $\partial^2_v L>0$), superlinear in $v$, and the associated Euler-Lagrange flow on $TM\times\T$ is complete. This flow is conjugated to the Hamiltonian flow of $H$ on $T^*M\times\T$. Moreover, the Fenchel-Legendre transform applied to $L$ yields $H$ back. We refer to \cite{Fat} for the proofs of all these facts.

Starting from our family $\F$ of one-periodic Tonelli Hamiltonians, we thus dispose of an associated family of one-periodic Tonelli Lagrangians, which we shall still denote with the same symbol $\F$. For instance, depending on context, both expressions $L\in\F$ and $H\in\F$ will be used.

\subsection{Outline of Section \ref{section 4}}

The general idea is to translate the dynamics of the family $\F$ into a simpler dynamics on pseudographs, by means of the characterization of the former in terms of minimal action and Lax-Oleinik operators.
\smallbreak

\emph{Subsection \ref{action}}. We recall the definition of time-one action of a Tonelli Lagrangian, along with the properties which are important for the sequel (Proposition \ref{action properties}).
\smallbreak

\emph{Subsection \ref{subsect:lax-oleinik}}. We associate to the time-one action (and, more generally, to any cost, i.e.~any continuous function on $M\times M$) a Lax-Oleinik operator, in the usual way. This Lax-Oleinik operator can be interpreted also as an operator on pseudographs (formula \eqref{well-def2}). 

The properties of the time-one action, which have been recalled in Subsection \ref{action}, nicely reflect in properties of the corresponding Lax-Oleinik operator 
(Remark \ref{remark laxoleinik}). These nice properties are not lost under some operations on costs such as minimums and compositions (Proposition \ref{preserved}).
\smallbreak

\emph{Subsection \ref{wkt}}. We review how the language of pseudographs allows to concisely rephrase some aspects of the weak Kam theory for one Tonelli Lagrangian. From the viewpoint of the present article, this may be regarded as a special case in which our family $\F$ is a singleton.
\smallbreak

\emph{Subsection \ref{Sic}}. We generalize the Subsection \ref{wkt} to the general case in which $\F$ is not a singleton. The key object is, for every cohomology $c$, a large semigroup $\Sic$ (depending on $\F$) of Lax-Oleinik operators on the space $\p_c$ of pseudographs of cohomology $c$. This semigroup is essentially the one generated by the time-one actions of the Lagrangians in $\F$ with respect to the operations on costs introduced in Subsection \ref{subsect:lax-oleinik}. As we will see, the dynamics on $\p_c$ of the semigroup $\Sic$ is related to the dynamics on $T^*M$ of the semigroup generated by the time-one maps $\phi_H,\,H\in\F$. 
\smallbreak

Crucially, the semigroup $\Sic$ will contain, after passing to the limit, the operators associated to the Peierls barriers of the Lagrangians in $\F$, along with their successive compositions. This aspect, together with the possibility of ``shadowing'' these operators with ``finite-time'' ones, will be at the heart of the Mather mechanism in the next section.

\subsection{Properties of the Lagrangian action}
\label{action}

Given a one-periodic Tonelli Lagrangian $L$ on $M$ and a closed smooth one-form $\eta$, the time-one action $A_{L,\eta}\colon M\times M\to\R$ is defined by
\begin{equation}
\label{time-one action}
A_{L,\eta}(y,x)=\min_{\substack{\gamma(0)=y, \gamma(1)=x}}\ \int_0^1 L\bigl(\gamma(t),\dot\gamma(t),t\bigr)-\eta_{\gamma(t)}(\dot\gamma(t))\,dt
\end{equation}
where the minimum is taken over absolutely continuous curves $\gamma$. It is well-known that minimizers exist. The following important properties of $A_L$ are also known.

\begin{proposition}
\label{action properties}
\mbox{}
\begin{enumerate}
\item[(i)] $A_{L,\eta+df}(y,x)=A_{L,\eta}(y,x)+f(y)-f(x)$; this is immediate from the definition.
\item[(ii)] $\eta\mapsto A_{L,\eta}$ is continuous if $\Omega$ is endowed with the topology induced from the space of pseudographs $E$ introduced in Section \ref{pseudographs}.

In view of $(i)$ above, this is equivalent to the continuity of $c\mapsto A_{L,S(c)}$. For a proof of this last fact, see \cite[Appendix B.6]{Ber08}. 
\item[(iii)] $A_{L,\eta}$ is semiconcave. Even more, if $C\subset H^1(M,\R)$ is compact, then $\{A_{L,S(c)}\}_{c\in C}$ is equi-semiconcave (for a proof see \cite[Appendix B.7]{Ber08}).

\item[(iv)] $\partial_x A_{L,\eta}(y,x)$ exists if and only if $\partial_y A_{L,\eta}(y,x)$ exists and in this case we have
\[
\bigl(x,\eta_x + \partial_x A_{L,\eta}(y,x)\bigr)=\phi_H\bigl(y,\eta_y-\partial_y A_{L,\eta}(y,x)\bigr),
\]
where $H$ is the Hamiltonian associated to $L$.

\end{enumerate}
\end{proposition}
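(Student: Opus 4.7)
For (i), the identity is immediate: for any absolutely continuous $\gamma\colon[0,1]\to M$ from $y$ to $x$, $\int_0^1(df)_{\gamma(t)}(\dot\gamma(t))\,dt=f(x)-f(y)$, so the integrand in \eqref{time-one action} changes by a total derivative whose integral is this endpoint constant, which factors out of the minimum. This also supplies the reduction needed for (ii): writing each $\eta\in\Omega$ as $S([\eta])+df$ and applying (i), continuity of $\eta\mapsto A_{L,\eta}$ in the $E$--topology is equivalent to continuity of $c\mapsto A_{L,S(c)}$, together with the trivially continuous correction $f\mapsto f(y)-f(x)$.

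For (ii), I would fix a compact $C\subset H^1(M,\R)$ and observe that by the Tonelli superlinearity of $L$ and the boundedness of $S(C)$ in any $C^0$-norm, there is a compact $K\subset TM$ containing the velocities of all minimizers for $(y,x)\in M\times M$ and $c\in C$. Plugging a minimizer $\gamma$ for $S(c)$ into the $S(c')$--action and symmetrizing gives
\[
|A_{L,S(c')}(y,x)-A_{L,S(c)}(y,x)|\;\le\;\sup_{(q,v)\in K}\bigl|(S(c')-S(c))_q(v)\bigr|,
\]
uniformly in $(y,x)$, which tends to $0$ as $c'\to c$ because $S$ is linear and continuous with values in smooth forms.

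For (iii), I would show that $A_{L,\eta}$ is locally a minimum of a smooth family with uniformly bounded Hessian, which is the standard route to semiconcavity. Fix $(y_0,x_0)$ and a minimizer $\gamma_0$. For $(y,x)$ near $(y_0,x_0)$, let $\gamma_{y,x}$ be a smooth competitor equal to $\gamma_0$ on $[\delta,1-\delta]$ and interpolating smoothly to the new endpoints on the remaining intervals. Then $\Phi(y,x)=\int_0^1 L(\gamma_{y,x},\dot\gamma_{y,x},t)-\eta_{\gamma_{y,x}}(\dot\gamma_{y,x})\,dt$ is a $C^2$ upper bound for $A_{L,\eta}$, with equality at $(y_0,x_0)$ and Hessian controlled by local $C^2$--norms of $L$, $\eta$ and the interpolation on a compact piece of $TM$. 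Equi-semiconcavity as $c$ varies over compact $C$ then follows by making all these bounds uniform in $c$, using the a priori bounds on minimizers obtained in Paragraph 2.

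Finally, (iv) is the classical first variation formula. Since $\eta$ is closed, any minimizer solves the Euler--Lagrange equation for $L$ alone. A standard consequence of the semiconcavity proved in (iii) (and the analogous ``semiconvexity in $y$'' obtained by the same competitor construction at the initial endpoint) is that differentiability of $A_{L,\eta}$ at $(y,x)$ in either variable is equivalent to uniqueness of the minimizer $\gamma$; this yields the ``iff.'' Computing the first variation with $y$ fixed, and symmetrically with $x$ fixed, gives
\[
\partial_x A_{L,\eta}(y,x)=L_v(x,\dot\gamma(1),1)-\eta_x,\qquad \partial_y A_{L,\eta}(y,x)=\eta_y-L_v(y,\dot\gamma(0),0),
\]
and setting $p(t)=L_v(\gamma(t),\dot\gamma(t),t)$ together with $\phi_H(\gamma(0),p(0))=(\gamma(1),p(1))$ produces the stated identity. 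The main subtlety is the uniformity of constants in (iii): semiconcavity on a manifold is chart-dependent, so one must verify that the quadratic upper bounds above can be chosen independently of $c\in C$ when expressed in a fixed finite atlas, combining the $C^1$ bounds on minimizers with $C^2$ estimates on the competitor construction that are insensitive to $c$.
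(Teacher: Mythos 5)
Your proof is correct and follows the standard route, which is also the route taken in the references the paper cites for this proposition (the paper itself gives no proof, merely citing \cite[App.\ B.6, B.7]{Ber08} for (ii) and (iii) and treating (i) and (iv) as well-known).

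Items (i), (ii), (iii) are fine: the exact-form cancellation, the compactness of minimizing velocities uniform in $c$ over a compact set of classes (using superlinearity of $L$), and the smooth upper support function touching $A_{L,\eta}$ at a minimizer to get a Hessian bound — these are precisely the arguments used in \cite{Ber08}. Your remark about chart-dependence at the end is the right subtlety to flag.

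One small slip in (iv): what you call ``semiconvexity in $y$'' is actually again semiconcavity. The competitor construction at the initial endpoint still produces a $C^2$ function lying \emph{above} $A_{L,\eta}(\cdot,x)$ and touching it at $y_0$, so $A_{L,\eta}$ is jointly semiconcave in $(y,x)$, not semiconvex in the first argument. The consequence you want is nevertheless obtained: for a semiconcave function the reachable gradients at a point are the terminal (resp.\ initial) momenta of minimizers, and the function is differentiable there iff the superdifferential is a singleton iff there is a unique minimizer; that gives the ``iff'' between $\partial_x$ and $\partial_y$ existence. The first-variation computation, where the closedness of $\eta$ kills the interior contribution of the $\eta$ term and leaves only the endpoint evaluations, is carried out correctly and matches the stated identity with $p(t)=L_v(\gamma(t),\dot\gamma(t),t)$ and $\phi_H$ the time-one Hamiltonian map.
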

The time-$n$ action $A^n_L$ is defined by letting $A^1_L=A_L$ and by induction
\[
A^{n+1}_{L,\eta}(y,x)=\min_{z\in M}\, \bigl\{A_{L,\eta}^{n}(y,z)+A_{L,\eta}^1(z,x)\bigr\}
\]
or, equivalently,
\[
A^{n}_{L,\eta}(y,x)=\min_{\substack{\gamma(0)=y,\gamma(n)=x}}\ \int_0^n L\bigl(\gamma(t),\dot\gamma(t),t\bigr)-\eta_{\gamma(t)}(\dot\gamma(t))\,dt,
\]
the minimum being over absolutely continuous curves.

It is well-known that, given $L$, there exists an unique function $\alpha\colon H^1(M,\R)\to\R$ such that the function
\[
h_{L,\eta}(y,x)=\liminf_{n\to\infty}\ A^n_{L,\eta}(y,x)+n\alpha([\eta])
\]
is real-valued for every $\eta$; the family $h_L\equiv\{h_{L,\eta}\}_\eta$ is called the Peierls barrier of $L$. It clearly satisfies the property $(i)$ of Proposition \ref{action properties}; it also satisfies the property $(iii)$, the proof of this fact will be recalled in Subsection \ref{wkt}.


\subsection{Lax-Oleinik operators}
\label{subsect:lax-oleinik}

For any compact space $X$, the set of real continuous functions $C(X)$ will be endowed with the standard sup-norm $\|\cdot\|_\infty$. Any continuous function $A\in C(M\times M)$ will be called a \emph{cost}. We will regard the time-one actions $A_{L,\eta}$ of the previous subsection as very special costs.

To any cost $A$, it is possible to associate the Lax-Oleinik operator $T_A\colon C(M)\to C(M)$ defined by
\[
T_A u(x)=\min_{y\in M}\,\bigl\{ u(y)+A(y,x)\bigr\},\qquad u\in C(M)
\]
and the dual Lax-Oleinik operator $\breve T_A\colon C(M)\to C(M)$
\[
\breve T_A u(y)=\max_{x\in M}\,\bigl\{ u(x)-A(y,x)\bigr\},\qquad u\in C(M).
\]
We call $\I_A(u)\subseteq M$ the set of points $y$ such that $T_Au(x)=u(y)+A(y,x)$ for some $x$.
Let us now list without proof some basic properties of these objects. Note that basically every property of $T_A$ has a dual counterpart in a property of $\breve T_A$, even though we do not always explicit it. Recall that $|\cdot|$ indicates half the oscillation of a function.
\begin{proposition}
\label{laxoleinik properties}
Let $A$ be a cost and $u$ be a continuous function on $M$.
\begin{enumerate}
\item [(i)]  The minimum and the maximum in the above formulas for $T_A u$ and $\breve T_A u$ are actually achieved; $T_A u$ and $\breve T_A u$ actually belong to $C(M)$;
\item[(ii)]  if $A'$ is another cost and $u'$ another continuous function, then
\begin{equation}
\begin{split}
\label{estimate T}
\| T_{A' }u'-T_A u\|_\infty&\le \|A'-A\|_\infty +\|u'-u\|_\infty,
\\
| T_{A' }u'-T_A u|\ &\le\ |A'-A| +|u'-u|
\end{split}
\end{equation}
\item[(iii)] $\I_A(u)$ is compact and non-empty;
\item[(iv)] \label{semicontinuity}the set-valued function $(A,u)\mapsto \I_A(u)$ is upper-semicontinuos;
\item[(v)] for every $A$ and $u$, we have $\breve T_A T_A u\le u$ and
\[
\I_A(u)=\{ y\in M: \breve T_A T_A u (y)=u(y)\}=\mathrm{arg\,min\ } \bigl\{u-\breve T_A T_A u\bigr\}.
\]
\item[(vi)] for every $A$ and $u$, we have
\[
T_A\breve T_A T_A\, u=T_A\, u\qquad\text{and}\qquad \breve T_A T_A \breve T_A\, u=\breve T_A\, u;
\]
\item[(vii)] if $A$ is semiconcave, then $T_A u$ is semiconcave for any $u$, and $sc(u)\le sc(A)$.
\end{enumerate}
\end{proposition}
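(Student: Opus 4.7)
The plan is to dispatch each of the seven items in sequence, relying only on compactness of $M$, uniform continuity of $A$ on $M \times M$, and the variational definitions of $T_A$ and $\breve T_A$. Items (i)--(iv) are of a topological/analytic flavor, while (v)--(vii) follow from direct manipulation of the defining formulas.

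For (i), the minimum in $T_A u(x) = \min_y [u(y)+A(y,x)]$ is attained because $y\mapsto u(y)+A(y,x)$ is continuous on the compact set $M$; dually for $\breve T_A u$. Continuity of $T_A u$ in $x$ follows from uniform continuity of $A$: if $y_x$ realizes the minimum at $x$, then $T_A u(x')\le u(y_x)+A(y_x,x')=T_A u(x)+[A(y_x,x')-A(y_x,x)]$, and the bracketed term tends to $0$ uniformly in $y_x$ as $x'\to x$. For (ii), the sup-norm bound is obtained by picking a minimizer $y^*$ for one side and using it as a test point on the other: $T_A u(x)\le u(y^*)+A(y^*,x)\le T_{A'}u'(x)+\|u-u'\|_\infty+\|A-A'\|_\infty$. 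For the oscillation version I would redo the same argument pointwise to show that $T_A u(x)-T_{A'}u'(x)$ lies between $\min(u-u')+\min(A-A')$ and $\max(u-u')+\max(A-A')$, and then divide the amplitude of this interval by $2$.

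For (iii), nonemptiness is immediate: any $x$ and any associated minimizer $y$ yields $y\in\I_A(u)$. Compactness follows from a sequential argument: if $y_n\in\I_A(u)$ with associated $x_n$ satisfy $y_n\to y$, extract $x_n\to x$ by compactness of $M$; continuity of $T_A u$ and $A$ then gives $T_A u(x)=u(y)+A(y,x)$, hence $y\in \I_A(u)$. For (iv) I would essentially repeat this argument while also letting $(A_n,u_n)\to(A,u)$, using the continuity estimate from (ii) to pass to the limit in $T_{A_n}u_n(x_n)\to T_A u(x)$. For (v), the inequality $T_A u(x)\le u(y)+A(y,x)$ rearranges to $T_A u(x)-A(y,x)\le u(y)$, so taking the maximum over $x$ gives $\breve T_A T_A u(y)\le u(y)$; equality at $y$ holds precisely when some $x$ realizes the inequality, which by definition means $y\in\I_A(u)$; the argmin reformulation is then formal.

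For (vi), monotonicity of $T_A$ and the inequality of (v) give $T_A\breve T_A T_A u\le T_A u$; for the reverse, specializing the maximum in $\breve T_A T_A u(y)\ge T_A u(x)-A(y,x)$ yields $\breve T_A T_A u(y)+A(y,x)\ge T_A u(x)$, and taking the minimum over $y$ on the left gives exactly $T_A\breve T_A T_A u(x)\ge T_A u(x)$. The dual identity is verified identically. Finally for (vii), for each fixed $y$ the function $x\mapsto u(y)+A(y,x)$ is semiconcave with constant $\le sc(A)$, and since $T_A u(x)=\inf_y [u(y)+A(y,x)]$ is a finite infimum of an equi-semiconcave family, the inequality $sc(\inf_\lambda u_\lambda)\le\sup_\lambda sc(u_\lambda)$ recorded in (\ref{sc min}) gives $sc(T_A u)\le sc(A)$ (the displayed `$sc(u)\le sc(A)$' in the statement being a typo for $sc(T_A u)$). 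No step presents a genuine obstacle; the only delicate point is the oscillation inequality in (ii), which requires the symmetric two-sided choice of minimizer sketched above rather than a naive sup-norm bound.
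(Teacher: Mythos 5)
The paper states this proposition without proof, remarking only that these are ``basic properties''; there is therefore no in-text argument to compare against, but your proof is correct and is the standard one. Your handling of the half-oscillation bound in (ii) by squeezing $T_A u - T_{A'}u'$ between $\min(u-u')+\min(A-A')$ and $\max(u-u')+\max(A-A')$ is the right refinement beyond the sup-norm estimate, your closed-graph argument for the upper semicontinuity in (iv) is what one wants for a compact-valued map on a compact metric space, and you are also right that the displayed ``$sc(u)\le sc(A)$'' in (vii) should read $sc(T_A u)\le sc(A)$ (as the paper itself uses it later, e.g.\ in discussing Remark~\ref{remark laxoleinik}(iii) and \eqref{peierls extended}). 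One small point worth making explicit in (vii): the claim that $x\mapsto A(y,x)$ is $sc(A)$-semiconcave uniformly in $y$ relies on taking an atlas on $M\times M$ built from products of charts on $M$, which is exactly the convention of \cite[Appendix 1]{Ber08} that the paper fixes in Subsection~\ref{sc}.
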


We are going to consider families of costs indexed by closed smooth one-forms. Let us give some definitions.

\begin{definition}
\label{Ffamily}

Let $A\equiv\{A_\eta\}_{\eta\in\Omega}$ be a family of costs indexed by the closed smooth one-forms. We say that $A$ is:
\begin{itemize}
\item[(i)]  \emph{geometric} if $A_\eta$ is Lipschitz for every $\eta$ and
\begin{equation}
\label{well-def}
A_{\eta+df}(y,x)=A_\eta(y,x)+f(y)-f(x) \qquad\forall\; f\in C^\infty(M);
\end{equation}
\item[(ii)] \emph{continuous} if
\[
\Omega\ni\eta\mapsto\ A_\eta \quad \text{ is continuous}
\]
when $\Omega$ is endowed with the topology induced from $E$, see Section \ref{pseudographs}. Note that if a family $A$ is geometric, the continuity of $c\mapsto A_{S(c)}$ is sufficient in order to have the continuity of $\eta\mapsto A_\eta$; here $S$ is the linear section chosen in Section \ref{pseudographs};
\item[(iii)] \emph{locally equi-semiconcave} if, for any compact $C\subset H^1(M,\R)$, the family $\{A_{S(c)}\}_{c\in C}$ is equi-semiconcave;
\item[(iv)] \emph{of $\F$-flow-type} if there exists $N\in\N$ such that the following holds:
\begin{multline*}
\text{the partial derivatives $\partial_x A_{\eta}(y,x)$ and $\partial_y A_{\eta}(y,x)$ exist}
\\
\Rightarrow\ \bigl(y,\eta_y-\partial_y A_{\eta}(y,x)\bigr)\ \vdash_{N,\F}\ \bigl(x,\eta_x+\partial_x A_{\eta}(y,x)\bigr)
\end{multline*}
for any $\eta$ and for any $(y,x)$. We say that $A$ is of $N,\F$-flow-type if we want to specify the $N$.
\end{itemize}
If all the above conditions are satisfied, we say for short that $A$ is a \emph{$\F$-family}.
\end{definition}

Observe that the Proposition \ref{action properties} says that the time-one actions $\{A_{L,\eta}\}_\eta, L\in\F$, are $\F$-families. In the next subsection we are going to introduce some operations on costs which will preserve the property of being an $\F$-family. This will allow to use the Lagrangian time-one actions as ``basic bricks'' to build many $\F$-families of costs. 

\medbreak
The utility of $\F$-families comes from the following remark.

\begin{remark}
\label{remark laxoleinik}
\mbox{}
\begin{itemize}
\item[(i)]
If $A\equiv\{A_\eta\}_{\eta\in\Omega}$ is a geometric family of costs then 
\[
T_{A_{\eta+df}}(u-f)=T_{A_\eta}(u)-f.
\]
Hence, an induced operator on pseudographs $\Phi_A\colon E\to E$ is well-defined by
\begin{equation}
\label{well-def2}
\Phi_A(\G_{\eta,u})=\G_{\eta,T_{A_\eta}u}
\end{equation}
as well as its dual counterpart
\[
\breve\Phi_A(\G_{\eta,u})=\G_{\eta,\breve T_{A_\eta}u}.
\]
Note that both operators preserve the cohomology of $\G$, i.e.~$c(\Phi_A(\G))=c(\breve\Phi_A(\G))=c(\G)$. 

If $A'$ is another geometric family of costs, and if $\G=\G_{c,u},\G'=\G_{c',u'}\in E$ are two pseudographs, we have the following inequality: 
\begin{equation}
\label{continuity phi}
\begin{split}
\| \Phi_A(\G)-\Phi_{A'}(\G') \|_E & = \|c-c'\|_{H^1}+ |T_{A_{c}}u-T_{A'_{c'}}u'| 
\\
& \le \|\G-\G'\|_E + |A_{c}-A'_{c'}|
\end{split}
\end{equation}
which follows from \eqref{estimate T}.

In the same spirit, $\I_{A_{\eta+df}}(u-f)=\I_{A_\eta}(u)$, thus the set $\I_{A_\eta}(u)$ is also well-defined on pseudographs, and we will denote it by $\I_A(\G)$ or $\I_{\Phi_A}(\G)$. Items $(v)$ and $(vi)$ in Proposition \ref{laxoleinik properties} translate respectively into
\begin{equation}
\label{obstruction}
\I_{A}(\G)=\G\wedge\breve\Phi_A\Phi_A(\G).
\end{equation}
and
\begin{equation}
\label{three times}
\Phi_A\breve\Phi_A \Phi_A=\Phi_A,\qquad\breve \Phi_A\Phi_A\breve\Phi_A=\breve\Phi_A.
\end{equation}

\item[(ii)] If $\{A_\eta\}_\eta$ is a continuous geometric family, then $\Phi_A$ is continuous thanks to the estimate \eqref{continuity phi}. Moreover, $\I_A(\G)$ is upper-semicontinuous viewed as a (set-valued) function from $E$ to $M$. Indeed, the composition
\[
(\eta,u)\mapsto (A_\eta,u)\mapsto \I_{A_\eta}(u)
\]
is upper-semicontinuous (thanks to Proposition \ref{semicontinuity}(iv)), and this remains true when passing to the quotient space of pseudographs.

\item[(iii)]
If $\{A_\eta\}_{\eta\in\Omega}$ is a locally equi-semiconcave geometric family, then $\Phi_A(\p)\subseteq\p$ and $\Phi_A(\p_C)$ is relatively compact for all compact $C\subset H^1(M,\R)$. This is a consequence of Proposition \ref{laxoleinik properties}(vii) and the Ascoli-Arzelà Theorem (recall that equi-semiconcave implies equi-Lipschitz). The analogous result holds true for $\breve\Phi_A$.

\item[(iv)]
If $\{A_\eta\}_\eta$ is a $N,\F$-flow-type, locally equi-semiconcave and geometric family of costs, then
\begin{equation}
\label{basic forcing}
\G_{|\I_{A}(\G)}\ \vdash_{N,\F}\ \Phi_{A}(\G)\qquad\forall\,\G\in\p.
\end{equation}
This important fact is obtained by writing $\G=\G_{\eta,u}$ and then applying Proposition \ref{pointwise basic forcing}. The dual statement is also true and is proved analogously. It can be expressed as
\[
\G_{|\breve\I_{A}(\G)}\ \vdash_{N,-\F}\ \breve\Phi_{A}(\G)\qquad\forall\,\G\in\breve\p.
\]
Here we have denoted by $-\F$ the family $\{-H:H\in\F\}$; its elements are not Tonelli Hamiltonians but the relation $\vdash_{-\F}$ is still meaningful. We have also denoted by $\breve\I_A(\G)$ the set of points $x\in M$ such that $\breve T_{A_\eta} u(y)=u(x)-A_{\eta}(y,x)$ for some $y$ (and $\eta$ and $u$ are such that $\G=\G_{\eta,u}$).
\end{itemize}
\end{remark}

\begin{proposition}
\label{pointwise basic forcing}
Suppose that the family of costs $\{A_\eta\}_\eta$ is of $N,\F$-flow-type, locally equi-semiconcave and geometric. Let $u\colon M\to \R$ be semiconcave and  $v=T_{A_\eta}u$. Then, for every $x$ such that $dv_x$ exists and for every $y$ such that $v(x)=u(y)+A_\eta(y,x)$, we have
\begin{align*}
\text{the derivative $du_y$ exists and satisfies}\quad (y,\eta_y +du_y)\ \vdash_{N,\F}\ (x,\eta_x+dv_x).
\end{align*}
\end{proposition}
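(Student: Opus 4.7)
The plan is to verify existence of both partial derivatives $\partial_y A_\eta(y,x)$ and $\partial_x A_\eta(y,x)$, identify them as $-du_y$ and $dv_x$ respectively, and then invoke the $N,\F$-flow-type hypothesis. The local equi-semiconcavity of $\{A_\eta\}_\eta$ gives semiconcavity of $A_\eta$ on $M\times M$ and hence, in local charts, partial semiconcavity of the functions $A_\eta(\cdot,x)$ and $A_\eta(y,\cdot)$ on $M$. For the $y$-derivative, since $y$ achieves the minimum in $v(x) = T_{A_\eta}u(x)$, the function $z \mapsto u(z) + A_\eta(z,x)$ attains a minimum at $y$; both summands are semiconcave, so the criterion recalled in Subsection \ref{sc} yields differentiability of both $u$ and $A_\eta(\cdot,x)$ at $y$, with $du_y + \partial_y A_\eta(y,x) = 0$, whence $\eta_y - \partial_y A_\eta(y,x) = \eta_y + du_y$.

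The $x$-derivative is the main technical point. The envelope inequality $v(x') \le F(x') := u(y) + A_\eta(y,x')$ holds for all $x'$, with equality at $x' = x$, and $v$ is differentiable at $x$ by hypothesis. The semiconcave function $F$ has non-empty super-differential at $x$; for any $p \in D^+F(x)$, working in a chart and combining the upper Taylor bound coming from semiconcavity with the first-order expansion of $v$, for each tangent vector $h$ and each small $t > 0$:
\[
v(x) + t\,dv_x(h) + o(t) \;=\; v(x+th) \;\le\; F(x+th) \;\le\; F(x) + t\,p(h) + o(t),
\]
so $dv_x(h) \le p(h)$; applying the same estimate to $-h$ forces $p = dv_x$. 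Thus $D^+F(x)$ is a singleton, which for a semiconcave function is equivalent to differentiability, so $\partial_x A_\eta(y,x) = dv_x$ and $\eta_x + \partial_x A_\eta(y,x) = \eta_x + dv_x$.

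With existence of both partial derivatives and the identifications above in hand, the $N,\F$-flow-type condition of Definition \ref{Ffamily}(iv) directly delivers $(y, \eta_y + du_y) \vdash_{N,\F} (x, \eta_x + dv_x)$. The only non-routine ingredient is the third step: extracting full differentiability of $F$ at $x$ from differentiability of its lower envelope $v$ alone, by playing off the super-differential Taylor bound against the envelope inequality in both tangent directions.
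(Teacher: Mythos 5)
Your proof is correct and takes essentially the same route as the paper's: minimality of $y'\mapsto u(y')+A_\eta(y',x)$ gives $du_y=-\partial_y A_\eta(y,x)$, the envelope inequality together with the super-differential of the semiconcave function $A_\eta(y,\cdot)$ pins down $\partial_x A_\eta(y,x)=dv_x$, and the $N,\F$-flow-type condition then concludes. The only difference is one of exposition: the paper handles the $x$-derivative tersely by noting that $x'\mapsto v(x')-A_\eta(y,x')$ has a maximum at $x$, that $dv_x$ exists, and that $-A_\eta$ is semiconvex, whereas you spell out the underlying super-differential pinching argument ($D^+F(x)=\{dv_x\}$, hence differentiability) in full.
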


\begin{proof}
The proof is essentially the same as in \cite[Proposition 2.7]{Ber08} but we report it for completeness. Let $x$ be such that $dv_x$ exists, and let $y$ be such that $v(x)=u(y)+A_\eta(y,x)$. From the definition of $T_{A_\eta}$, one gets that the function $y'\mapsto u(y')+A_\eta(y',x)$ has a minimum at $y$. Being the sum of two semiconcave functions, both of them have to be differentiable at $y$ and
\[
du_{y}+\partial_y A_\eta(y,x)=0.
\]
Similarly, the function $x'\mapsto v(x')-A_\eta(y,x')$ has a maximum at $x$. Since $dv_{x}$ exists by assumption and $-A_\eta$ is semiconvex, we get that $\partial_x A_\eta(y, x)$ exists and
\[
dv_{x}-\partial_x A_\eta(y, x)=0.
\]
Thanks to the $N,\F$-flow-type property we can conclude:
\begin{align*}
(y,\eta_{y}+du_{y})=\bigl(y,\eta_{y}-\partial_y A_{\eta}(y,x)\bigr)\ \vdash_{N,\F}\ \bigl(x,\eta_{x}+\partial_x A_\eta(y,x)\bigr)=(x,\eta_{x}+dv_{x}).
\end{align*}
\end{proof}

\subsection{Operations on costs and families of costs}
\label{subsect:operations}

There are three quite natural operations on costs. For $A,A'$ two costs and $\lambda\in\R$, they are defined as follows:
\begin{align*}
(A,\lambda)&\mapsto A+\lambda&\emph{(addition of constant)}
\\
(A,A')&\mapsto \min\{A,A'\}&\emph{(minimum)}
\\
(A,A')&\mapsto A'\circ A(y,x)=\min_{z\in M}\, \bigl\{A(y,z)+A'(z,x)\bigr\}&\emph{(composition)}.
\end{align*}
It is easily checked that the three of them are continuous in their arguments and that the Lax-Oleinik operators well-behave in the following sense: for $u\in C(M)$, we have
\begin{align}
\label{laxoleinik compatibility} 
T_{A+\lambda}u&=T_A u+\lambda\nonumber
\\
T_{\min\{A,A'\}}&=\min\{T_A u,T_{A'} u\}
\\
T_{A'\circ A}u&=T_{A'}\circ T_A u.\nonumber
\end{align}

We can define the same operations on families of costs: for all $A\equiv\{A_\eta\}_\eta$, ${A'\equiv\{A'_\eta\}_\eta}$ and all functions $\lambda\colon H^1(M,\R)\to\R$, we define
\[
(A+\lambda)_\eta=A_\eta+\lambda([\eta]),\qquad\min\{A,A'\}_\eta=\min\{A_\eta,A'_\eta\},\qquad (A'\circ A)_\eta=A'_\eta\circ A_\eta.
\]

The following proposition shows that these operations preserve the fact of being a $\F$-family.

\begin{proposition}
\label{preserved}
Let $A, A'$ be two $\F$-families of costs, and $\lambda\colon H^1(M,\R)\to\R$ be a continuous function. Then $A+\lambda,\ \min\{A,A'\}$ and $A'\circ A$ are $\F$-families as well. Moreover, the semiconcavity constants are controlled by
\begin{equation}
\label{eq:sc relations}
\begin{aligned}
sc\,(A+\lambda)_\eta&=sc(A_\eta)
\\
sc(\min\{A,A'\}_\eta)&\le \max\{sc(A_\eta),sc(A'_\eta)\}
\\
sc\,(A'\circ A)_\eta&\le \max\{sc(A_\eta),sc(A'_\eta)\}
\end{aligned}
\end{equation}
for each $\eta\in\Omega$.
\end{proposition}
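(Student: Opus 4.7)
The plan is to check, operation by operation, that each of the four properties defining an $\F$-family (geometric, continuous, locally equi-semiconcave, of $\F$-flow-type) is preserved. For addition of a continuous function $\lambda$ of cohomology, every property is immediate: geometricity uses that $[\eta+df]=[\eta]$, continuity is clear, semiconcavity is unchanged so $sc(A+\lambda)_\eta=sc(A_\eta)$, and the partial derivatives in $x$ and $y$ are unaffected, so the $\F$-flow-type parameter $N$ is preserved.

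For the minimum and the composition, the geometric property follows by factoring the boundary term $f(y)-f(x)$ out of the minimum, and continuity is immediate since both $\min$ and $\circ$ are continuous in the sup-norm on costs. The semiconcavity control comes from \eqref{sc min}: for the minimum it is direct, while for the composition I would observe that for each fixed $z\in M$ the function $(y,x)\mapsto A_\eta(y,z)+A'_\eta(z,x)$ has a block-separated Hessian and therefore semiconcavity constant at most $\max\{sc(A_\eta),sc(A'_\eta)\}$ uniformly in $z$; applying \eqref{sc min} to the infimum over $z$ then yields the claimed estimate, and the bound being uniform in $z$ transfers to local equi-semiconcavity as $[\eta]$ varies over a compact set.

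The real work is the $\F$-flow-type property. For the minimum, at any point $(y,x)$ where both partials of $\min\{A,A'\}_\eta$ exist, one may assume $A_\eta(y,x)\le A'_\eta(y,x)$; then $A_\eta\ge\min\{A,A'\}_\eta$ locally with equality at $(y,x)$, and a semiconcave function dominating a differentiable function with equality at a point is itself differentiable there with the same differential, so the flow-type of $A$ applies verbatim. For the composition, I would pick, at a differentiability point $(y,x)$ of $(A'\circ A)_\eta$, a minimizer $z^*$ with $(A'\circ A)_\eta(y,x)=A_\eta(y,z^*)+A'_\eta(z^*,x)$. The minimum-in-$z$ principle applied to the sum of the two semiconcave pieces $A_\eta(y,\cdot)$ and $A'_\eta(\cdot,x)$ shows both are differentiable at $z^*$ with opposite $z$-derivatives. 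For the $y$-derivative of $A_\eta(\cdot,z^*)$, the inequality $A_\eta(y',z^*)+A'_\eta(z^*,x)\ge(A'\circ A)_\eta(y',x)$, with equality at $y'=y$, places $\partial_y(A'\circ A)_\eta(y,x)$ in the subdifferential of the semiconcave function $A_\eta(\cdot,z^*)$ at $y$, forcing differentiability with that value; a symmetric argument at $(z^*,x)$ handles $\partial_x A'_\eta(z^*,x)$. Applying the flow-type property of $A$ (with parameter $N$) at $(y,z^*)$ and of $A'$ (with parameter $N'$) at $(z^*,x)$ and concatenating the two resulting finite polyorbits yields that $A'\circ A$ is of $(N+N'),\F$-flow-type.

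The main obstacle is this last step for the composition: transferring differentiability of $(A'\circ A)_\eta$ at $(y,x)$ back to differentiability of both $A_\eta$ at $(y,z^*)$ and $A'_\eta$ at $(z^*,x)$ in all variables, in order to legitimately invoke the flow-type properties of the two factors. Once differentiability is secured at the optimal intermediate point $z^*$, concatenation is automatic; everything else in the proposition is essentially a mechanical verification on generators of the three operations.
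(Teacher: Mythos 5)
Your proposal is correct and follows essentially the same route as the paper's own proof: the easy checks for geometricity and continuity, the semiconcavity estimates via \eqref{sc min} and the "sum of functions of separated variables" observation, and the crucial transfer of differentiability from $(A'\circ A)_\eta$ (resp.\ $\min\{A,A'\}_\eta$) to the factors at the optimal intermediate point via the superdifferential-squeeze argument for semiconcave functions, followed by concatenation of the two flow-type relations. The only cosmetic difference is that for the minimum you run the domination argument uniformly over the cases $A_\eta(y,x)\le A'_\eta(y,x)$, whereas the paper treats $<$, $>$, $=$ separately; this is a slight streamlining with no change in substance.
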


\begin{proof}
We have to verify that the four conditions of Definition \ref{Ffamily} hold true for the families $A+\lambda, \min\{A,A'\}$ and $A'\circ A$. Conditions $(i)$ and $(ii)$ are easy to check. 

In order to prove $(iii)$ (i.e.\ the local equi-semiconcavity), it suffices to prove the three relations \eqref{eq:sc relations}. The first is obvious and the second follows from \eqref{sc min}. For the third, let $\eta\in\Omega$. For any fixed $z$, each of the functions $(x,y)\mapsto A_\eta(y,z)+A'_\eta(z,x)$ is $\max\{sc(A_\eta),sc(A'_\eta)\}$-semiconcave on $M\times M$. This is a general property for functions on $M\times M$ which have the form $f(y)+g(x)$ with $f$ and $g$ semiconcave. Taking the minimum over $z$ yields $(A'\circ A)_\eta$, without deteriorating the semiconcavity constant due to $\eqref{sc min}$. This proves the third relation in \eqref{eq:sc relations}.

As for the condition $(iv)$, i.e.\ the $\F$-flow-type property, it is obvious for $A+\lambda$. Let us prove it for $\min\{A,A'\}$. Let $\eta\in\Omega$ and $x,y$ be such that $\partial_y \min\{A,A'\}_\eta(y,x)$ and $\partial_x \min\{A,A'\}_\eta(y,x)$ exist. If $A_\eta(y,x)<A'_\eta(y,x)$ then locally $\min\{A,A'\}_\eta=A_\eta$ and the $\F$-flow-type property of $\min\{A,A'\}$ reduces to the $\F$-flow-type property of $A$. Similarly if $A'_\eta(y,x)<A_\eta(y,x)$. In the remaining case in which $A_\eta(y,x)=A'_\eta(y,x)=\min\{A,A'\}_\eta(y,x)$, we have by semiconcavity
\begin{align*}
\partial_x \min\{A,A'\}_\eta(y,x)&=\partial_x A_\eta(y,x)=\partial_x A'_\eta(y,x)
\\
\partial_y \min\{A,A'\}_\eta(y,x)&=\partial_y A_\eta(y,x)=\partial_y A'_\eta(y,x).
\end{align*}
thus the $\F$-flow-type property of $\min\{A,A'\}$ reduces to the $\F$-flow-type property of $A$ or $A'$. From these considerations we conclude that $\min\{A,A'\}$ is a $\F$-flow-type family. From the proof just carried out it is also apparent that, if $A$ is of $N,\F$-flow-type and $A'$ is of $N',\F$-flow-type, then $\min\{A,A'\}$ is of $\max\{N,N'\},\F$-flow-type.

\medbreak

It remains to prove the $\F$-flow-type property for $A'\circ A$. Let $\eta,y,x$ be such that $\partial_y (A'\circ A)_\eta(y,x)$ and $\partial_x (A'\circ A)_\eta(y,x)$ exist. Let $z$ be a point of minimum in the expression
\[
\bigl(A'\circ A\bigr)_\eta(y,x)=\min_{z\in M}\ \{ A_\eta(y,z)+A'_\eta(z,x)\}.
\]
Since $A_\eta(y,\cdot)+A'_\eta(\cdot,x)$ is the sum of two semiconcave functions, both $\partial_x A_\eta(y,z)$ and $\partial_y A'_\eta(z,x)$ exist and they satisfy
\[
\partial_x A_\eta(y,z)+\partial_y A'_\eta(z,x)=0. 
\]
Note also that the function $x'\mapsto (A'\circ A)_\eta(y,x')-A'_\eta(z,x')$ has a maximum at $x$. Since $\partial_x(A'\circ A)_\eta(y,x)$ exists by assumption and $-A'_\eta$ is semiconvex, we get that $\partial_x A'_\eta(z,x)$ exists and
\[
\partial_x(A'\circ A)_\eta(y,x)-\partial_x A'_\eta(z,x)=0.
\]
By similar arguments, one gets that $\partial_y A_\eta(y,z)$ exists and
\[
\partial_y(A'\circ A)_\eta(y,x)-\partial_y A_\eta(y,z)=0.
\]
Using the relations just derived and the $\F$-flow-type property of $A$ and $A'$, we finally get 
\begin{align*}
\bigl(y,\eta_y-\partial_y(A'\circ A)_\eta(y,x)\bigr)&=\big(y,\eta_y-\partial_y A_\eta(y,z)\big)\ \vdash_{\F}\ \big(z,\eta_z+\partial_x A_\eta(y,z)\big)
\\
&=\big(z,\eta_z-\partial_y A'_\eta(z,x)\big)\ \vdash_{\F}\ \big(x,\eta_x+\partial_x A'_\eta(z,x)\big)
\\
&=\big(x,\eta_x+\partial_x(A'\circ A)_\eta(y,x)\big).
\end{align*}
This proves the $\F$-flow-type property for $A'\circ A$. From the proof just carried out it is also clear that, if $A$ is of $N,\F$-flow-type and $A'$ is of $N',\F$-flow-type, then $A'\circ A$ is of $(N+N'),\F$-flow-type.
\end{proof}

Note that \eqref{laxoleinik compatibility} implies
\begin{align}
\label{compatibility}
\qquad\Phi_{A+\lambda}=\Phi_A,&\qquad\Phi_{A'\circ A}=\Phi_{A'}\circ\Phi_A\nonumber
\\
\I_{A+\lambda}(\G)=\I_A(\G),\quad\I_{\min\{A,A'\}}(\G)&\subseteq\I_A(\G)\cup\I_{A'}(\G),\quad\I_{A'\circ A}(\G)\subseteq\I_A(\G).
\end{align}
Instead, we are not able to find an analogous formula for $\Phi_{\min\{A,A'\}}$. Let us notice that even if $\Phi_{A+\lambda}=\Phi_A$, the operation of adding a constant is not completely immaterial: it has a role for operators associated to costs such as $\min\{A+\lambda,A'+\lambda'\}$. If $\lambda'-\lambda$ is sufficiently big, then the corresponding operator will be $\Phi_A$, and if $\lambda-\lambda'$ is sufficiently big, the operator will be $\Phi_{A'}$. Intermediate values of $\lambda'-\lambda$ will correspond to intermediate situations.

\subsection{Weak KAM theory}
\label{wkt}

In this subsection we consider the special case $\F=\{L\}$ and we rephrase in the language of pseudographs some standard results in weak Kam theory. Some of them have already been used in Proposition \ref{diffusion}, and some others will be used in Section \ref{mather mechanism}.

An important role in the theory is played by the so-called weak Kam solutions. There are several equivalent definitions for them. The one which we are going to use is: given a Tonelli Lagrangian $L$ and a cohomology class $c$, a $c$-weak Kam solution for $L$ is a solution $u\in C(M)$ of the equation 
\[
u=T_{A_{L,c}}u+\alpha_L(c),
\]
where $A_{L,c}$ is the $c$-time-one action and $\alpha_L\colon H^1(M,\R)\to\R$ is Mather's $\alpha$-function appeared in Subsection \ref{action}. A dual $c$-weak Kam solution is defined as a solution $u\in C(M)$ of the equation
\[
u=\breve T_{A_{L,c}}u-\alpha_L(c)
\]
We say that $u$ is a weak Kam solution (resp.~dual weak Kam solution) if it is a $c$-weak Kam solution (resp.~dual weak Kam solution) for some $c$. The Weak Kam Theorem (cf.\ \cite[Theorem 4.7.1]{Fat}) states that for any Tonelli Lagrangian $L$ and any cohomology $c$ there exists at least one $c$-weak Kam solution and one $c$-dual weak Kam solution. In fact, $\alpha_L(c)$ is the unique constant such that the above equations admit a solution (assuming, as we do, that $M$ is compact).

It is no surprise, in view of the definition of $\Phi_{A_L}$ in \eqref{well-def2}, that the language of pseudographs allows to concisely reformulate these concepts. From that definition it is indeed immediate that:
\begin{equation}
\label{wks fixed point}
\text{$u$ is a $c$-weak Kam solution for $L$ $\ \Leftrightarrow\ $ $\G_{c,u}$ is a fixed point of $\Phi_{A_L}$.}
\end{equation}
In view of this, we shall call weak Kam solutions as well the fixed points of $\Phi_{A_L}$, and $c$-weak Kam solutions the fixed points in $\p_c$. Analogously for dual weak Kam solutions, with $\breve\Phi_A$ in place of $\Phi_A$. Notice that two $c$-weak Kam solutions $u$ and $u'$ differing by a constant correspond to the same weak Kam solution $\G_{c,u}=\G_{c,u'}$.

Another important object in weak Kam theory is the Peierls barrier $h_L$, introduced in Subsection \ref{action}. Let us point out that
\begin{equation}
\label{peierls extended}
h_{L,c}=\lim_{n\to\infty}\lim_{m\to\infty}\min\{A_{L,c}^n+n\alpha,A_{L,c}^{n+1}+(n+1)\alpha,\dots,A_{L,c}^m+m\alpha\},
\end{equation}
and that, by Proposition \ref{preserved}, the families of costs appearing in the right-hand side are locally equi-semiconcave in the sense of Definition \ref{Ffamily}, with a local (in $c$) common bound for their semiconcavity constants. Hence, they have a local (in $c$) common bound for their Lipschitz constants. By the Ascoli-Arzelà theorem, this implies that the two limits are uniform (for any fixed $c$). Since uniform limits preserve semiconcavity constants, we get that the family of costs $h_{L}$ is locally equi-semiconcave in the sense of Definition \ref{Ffamily}. In fact, by \eqref{eq:sc relations} we have $sc(h_{L,c})\le sc(A_{L,c})$ for each $c$. Remark \ref{remark laxoleinik}(iii) thus applies, i.e.~$\Phi_{h_{L}}(\p_C)$ is relatively compact in $\p$ for every compact $C\subset H^1(M,\R)$.

The next proposition reformulates in our language the well-known identities
\begin{align*}
\min_{z\in M}\, \bigl\{ h_{L,c}(y,z)+ A_{L,c}(z,x)+\alpha_L(c)\bigr\}&=h_{L,c}(y,x),
\\
\min_{z\in M}\, \bigl\{ A_{L,c}(y,z)+ h_{L,c}(z,x)+\alpha_L(c)\bigr\}&=h_{L,c}(y,x),
\\
\min_{z\in M}\,\bigl\{ h_{L,c}(y,z)+ h_{L,c}(z,x)\bigr\}&=h_{L,c}(y,x)\qquad \forall\,y,x\in M.
\end{align*}

\begin{proposition}
\label{peierls identities}
Let $h_L\equiv\{h_{L,c}\}_c$ be the family of costs associated to the Peierls barrier of $L$. The following identities hold true:
\begin{align*}
\Phi_{A_L}\circ\Phi_{h_L}&=\Phi_{h_L}
\\
\Phi_{h_L}\circ\Phi_{A_L}&=\Phi_{h_L}
\\
\Phi_{h_L}\circ\Phi_{h_L}&=\Phi_{h_L}
\end{align*}
\end{proposition}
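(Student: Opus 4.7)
The plan is to reduce each of the three operator identities to a pointwise equality of costs on $M\times M$ and then derive those three cost equalities from the representation \eqref{peierls extended} of the Peierls barrier as a uniform limit. More precisely, by \eqref{laxoleinik compatibility}--\eqref{compatibility} together with the fact that pseudographs are insensitive to additive constants, the three statements reduce to showing, for every fixed cohomology $c$ and all $y,x\in M$,
\begin{align*}
(A_{L,c}\circ h_{L,c})(y,x)+\alpha_L(c)&=h_{L,c}(y,x),\\
(h_{L,c}\circ A_{L,c})(y,x)+\alpha_L(c)&=h_{L,c}(y,x),\\
(h_{L,c}\circ h_{L,c})(y,x)&=h_{L,c}(y,x).
\end{align*}
Once these are proved, applying $T_{\cdot}$ and using $\Phi_{A+\lambda}=\Phi_A$ automatically converts them into the desired operator identities between $\Phi_{A_L}$ and $\Phi_{h_L}$.

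To handle the cost equalities, I would fix $c$ and abbreviate $B^k:=A_{L,c}^k+k\alpha_L(c)$ and $g_n:=\inf_{k\ge n}B^k$. The recursive definition of $A_L^n$ from Subsection \ref{action} gives $B^k\circ B^j=B^{k+j}$, and the discussion immediately after \eqref{peierls extended} tells me that $g_n$ is a well-defined continuous (in fact semiconcave) cost and that $g_n\uparrow h_{L,c}$ uniformly on $M\times M$. These two properties---compositional semigroup structure of the $B^k$ and uniform convergence to $h_{L,c}$---are the only inputs I will use.

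For the first identity, I would interchange $\min_z$ successively with $\lim_n$ (by uniform convergence) and with $\inf_{k\ge n}$ (legitimate because the minimum is attained on the compact $M$), obtaining
\begin{align*}
\min_z\{h_{L,c}(y,z)+B^1(z,x)\}
&=\lim_n\min_z\{g_n(y,z)+B^1(z,x)\}\\
&=\lim_n\inf_{k\ge n}\min_z\{B^k(y,z)+B^1(z,x)\}\\
&=\lim_n\inf_{k\ge n}B^{k+1}(y,x)=h_{L,c}(y,x).
\end{align*}
The second identity is symmetric, with $B^1$ on the left of the composition. For the third identity, iterating the first $m$ times gives $\min_z\{h_{L,c}(y,z)+B^m(z,x)\}=h_{L,c}(y,x)$ for every $m\ge1$. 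Taking $\inf_{m\ge n}$ and swapping with $\min_z$ yields $\min_z\{h_{L,c}(y,z)+g_n(z,x)\}=h_{L,c}(y,x)$, and then letting $n\to\infty$ with uniform convergence delivers $h_{L,c}\circ h_{L,c}=h_{L,c}$.

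The only genuine subtlety is the repeated exchange of $\min_z$ with limits and infima. Each exchange is justified by the uniform convergence $g_n\to h_{L,c}$ provided by the paper's setup of $h_{L,c}$ and by continuity/attainment of the minima on the compact manifold $M$. No further dynamical input is required; the identities are essentially a formal consequence of writing $h_{L,c}$ as a well-behaved uniform limit of iterated time-one actions.
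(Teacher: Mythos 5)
Your proposal is correct and takes the same route the paper indicates: reducing the three operator identities, via \eqref{laxoleinik compatibility} and the invariance of $\Phi$ under additive constants, to the three cost-level identities
\[
\min_z\{h_{L,c}(y,z)+A_{L,c}(z,x)+\alpha_L(c)\}=h_{L,c}(y,x),\quad
\min_z\{A_{L,c}(y,z)+h_{L,c}(z,x)+\alpha_L(c)\}=h_{L,c}(y,x),\quad
h_{L,c}\circ h_{L,c}=h_{L,c}.
\]
The paper stops there, presenting these as well-known facts from weak KAM theory; you additionally supply a clean, self-contained derivation of them from the uniform-limit representation \eqref{peierls extended}, using only the semigroup law $B^k\circ B^j=B^{k+j}$ for the shifted iterated actions and the legitimacy of swapping $\min_z$ with limits and infima (the latter always valid for infima, and $\min=\inf$ by compactness of $M$ and equi-Lipschitz continuity of the $g_n$). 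This is a genuine completion of the paper's sketch rather than a different method, and the exchanges you flag as the ``only subtlety'' are each correctly justified.
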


This proposition has important consequences. Indeed, it implies the following characterizations of weak Kam solutions.

\begin{proposition}
\label{tfae wks}
Let $L$ be a Tonelli Lagrangian, $c\in H^1(M,\R)$ and $u\colon M\to\R$ be a continuous function. The following are equivalent:
\begin{itemize}
\item[(i)] $u$ is a $c$-weak Kam solution for $L$;
\item[(ii)] $\G_{c,u}$ is a fixed point of $\Phi_{A_L}$;
\item[(iii)] $\G_{c,u}$ is a fixed point of $\Phi_{h_L}$;
\item[(iv)] $\G_{c,u}$ belongs to the image of $\Phi_{h_L}$.
\end{itemize}
The dual statement obtained by replacing `$c$-weak Kam solution' with `dual $c$-weak Kam solution' and $\Phi$ with $\breve\Phi$ is also true.
\end{proposition}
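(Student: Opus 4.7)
The plan is to close the loop (i) $\Leftrightarrow$ (ii) $\Leftarrow$ (iii) $\Leftrightarrow$ (iv) first, and then use a limiting argument for the only missing implication (ii) $\Rightarrow$ (iii).

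The equivalence (i) $\Leftrightarrow$ (ii) is essentially already recorded in \eqref{wks fixed point}: saying $\G_{c,u}=\Phi_{A_L}(\G_{c,u})=\G_{c,T_{A_{L,c}}u}$ amounts, at the level of representatives of pseudographs, to $u-T_{A_{L,c}}u$ being a smooth function with zero differential, hence a constant; by the Weak Kam Theorem the only admissible constant is $\alpha_L(c)$. Implication (iii) $\Rightarrow$ (iv) is immediate since $\G_{c,u}=\Phi_{h_L}(\G_{c,u})$ visibly lies in the image of $\Phi_{h_L}$, and the converse (iv) $\Rightarrow$ (iii) follows from the third identity in Proposition \ref{peierls identities}: writing $\G_{c,u}=\Phi_{h_L}(\G')$ and applying $\Phi_{h_L}$ gives $\Phi_{h_L}(\G_{c,u})=\Phi_{h_L}\circ\Phi_{h_L}(\G')=\Phi_{h_L}(\G')=\G_{c,u}$. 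The implication (iii) $\Rightarrow$ (ii) uses the first identity of Proposition \ref{peierls identities}: $\Phi_{A_L}(\G_{c,u})=\Phi_{A_L}\circ\Phi_{h_L}(\G_{c,u})=\Phi_{h_L}(\G_{c,u})=\G_{c,u}$.

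The nontrivial implication is (ii) $\Rightarrow$ (iii). Starting from $u=T_{A_{L,c}}u+\alpha_L(c)$, iteration yields $T_{A_{L,c}^k+k\alpha_L(c)}u=u$ for every $k\in\N$. Using the second identity of \eqref{laxoleinik compatibility}, which says that $T_{\min\{\cdot,\cdot\}}=\min\{T_{\cdot},T_{\cdot}\}$, I deduce
\[
T_{\min\{A_{L,c}^{n}+n\alpha_L(c),\,\dots,\,A_{L,c}^{m}+m\alpha_L(c)\}}u\;=\;\min_{k=n}^{m}T_{A_{L,c}^{k}+k\alpha_L(c)}u\;=\;u.
\]
Now the crucial observation, already recorded after formula \eqref{peierls extended}, is that the double limit \eqref{peierls extended} defining $h_{L,c}$ is uniform, because the family of costs appearing inside the brackets is equi-semiconcave (hence equi-Lipschitz) by Proposition \ref{preserved} together with \eqref{eq:sc relations}, so the Ascoli--Arzel\`a theorem applies. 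Combining this uniform convergence with the continuity estimate \eqref{estimate T} (or equivalently \eqref{continuity phi}), I can pass to the limit in the identity above to obtain $T_{h_{L,c}}u=u$, which rephrased at the level of pseudographs says $\Phi_{h_L}(\G_{c,u})=\G_{c,u}$, i.e.\ (iii).

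The main obstacle, and therefore where care is needed, is this passage to the limit in step (ii) $\Rightarrow$ (iii): one must invoke exactly the equi-semiconcavity / equi-Lipschitz argument that justifies the uniformity of the limits in \eqref{peierls extended}, and then combine it with the Lipschitz dependence of $T_A$ on $A$. The dual statement is proved by the same scheme, replacing $\min$ with $\max$ in the dual Lax--Oleinik operator and using the dual identities, which follow from Proposition \ref{peierls identities} by symmetry.
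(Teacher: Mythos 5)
Your proof is correct and follows essentially the same route as the paper: the closed loop (iii) $\Leftrightarrow$ (iv) and (iii) $\Rightarrow$ (ii) via Proposition \ref{peierls identities}, (i) $\Leftrightarrow$ (ii) via \eqref{wks fixed point}, and (ii) $\Rightarrow$ (iii) via \eqref{peierls extended}. Your treatment of (ii) $\Rightarrow$ (iii) simply unpacks, step by step (iterate, take finite minima, pass to the uniform limit), the paper's more compressed observation that the set of costs fixing a given pseudograph is closed under addition of constants, finite minima, compositions and uniform limits.
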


\begin{proof}
\mbox{}
\\
$(i)\Leftrightarrow (ii)$ has been already pointed out in \eqref{wks fixed point};
\\
$(iii)\Rightarrow (ii)$: let $\G$ be such that $\Phi_{h_L}(\G)=\G$. We then have, by Proposition \ref{peierls identities},
\[
\Phi_{A_L}(\G)=\Phi_{A_L}\Phi_{h_L}(\G)=\Phi_{h_L}(\G)=\G;
\]
$(iii)\Rightarrow(iv)$ is obvious;
\\
$(iv)\Rightarrow (iii)$: let $\G\in\Phi_{h_L}(E)$; then there exists $\G'\in E$ such that $\Phi_{h_L}(\G')=\G$. By Proposition \ref{peierls identities},
\[
\Phi_{h_L}(\G)=\Phi_{h_L}\Phi_{h_L}(\G')=\Phi_{h_L}(\G')=\G;
\]
$(ii)\Rightarrow(iii)$: for a given $\G\in\p$, the set of costs $A$ such that $\Phi_A(\G)=\G$ is closed under addition of constants, finite minima, compositions and uniform limits. From $\Phi_{A_L}(\G)=\G$ and expression \eqref{peierls extended} we thus get $\Phi_{h_L}(\G)=\G$.
\medbreak

The dual statement is proved analogously.
\end{proof}

Since the image of $\Phi_{A_L}$ is contained in $\p$ and the image of $\breve\Phi_{A_L}$ is contained in $\breve\p$, the previous proposition clearly implies that weak Kam solutions belong to $\p$ and dual weak Kam solutions belong to $\breve\p$. In $d=1$, it is known that the non-contractible invariant circles are exactly the pseudographs which are both weak Kam solutions and dual weak Kam solutions. 

The following proposition will be crucial in the proof of Proposition \ref{R(c)=0}. As usual, $H$ denotes the Tonelli Hamiltonian associated to $L$ via the Fenchel-Legendre transform.

\begin{proposition}
\label{wks&dual}
\mbox{}
\begin{itemize}
\item[(i)] A weak Kam solution $\G\subset T^*M$ is invariant for $\phi^{-1}_H$. A dual weak Kam solution is invariant for $\phi_H$; 
\item[(ii)] if $\G$ is a weak Kam solution belonging to $\breve\p$, then automatically $\G$ is a dual weak Kam solution. Analogously, a dual weak Kam solution belonging to $\p$ is a weak Kam solution;
\item[(iii)] if $\G$ is both a weak Kam solution and a dual weak Kam solution, then $\G$ is a Lipschitz graph over $M$ which is invariant for both $\phi_H$ and $\phi_H^{-1}$. 
\end{itemize}
\end{proposition}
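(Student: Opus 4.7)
My plan is to prove the three statements in turn, with Proposition \ref{pointwise basic forcing} (equivalently Proposition \ref{action properties}(iv)) applied to the singleton family $\F=\{L\}$ as the main tool.

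For (i), I would pick an arbitrary point $(x,c_x+du_x)$ in $\G=\G_{c,u}$ and use the weak Kam fixed-point equation $u=T_{A_{L,c}}u+\alpha_L(c)$ to produce $y\in M$ with $u(y)+A_{L,c}(y,x)=u(x)-\alpha_L(c)$. Proposition \ref{pointwise basic forcing} then yields $\phi_H(y,c_y+du_y)=(x,c_x+du_x)$, i.e.\ $\phi_H^{-1}(x,c_x+du_x)\in\G$, which is exactly the required invariance. The statement for dual weak Kam solutions is formally identical, using $\breve T_{A_{L,c}}$ and Remark \ref{remark laxoleinik}(iv) in place of Proposition \ref{pointwise basic forcing}.

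For (ii), the hypothesis $\G=\G_{c,u}\in\breve\p_c$ combined with $u$ being a weak Kam solution forces $u$ to be both semiconcave and semiconvex, hence $C^{1,1}$; accordingly $\G$ is a Lipschitz graph over the whole of $M$. The key object is the continuous map $\Psi\colon M\to M$ defined by $\Psi(x)=\pi\phi_H^{-1}(x,c_x+du_x)$, which is well defined because $du_x$ exists everywhere and continuous because $du$ is. Part (i) already gives $u(x)=u(\Psi(x))+A_{L,c}(\Psi(x),x)+\alpha_L(c)$ for every $x\in M$, so the dual weak Kam property $\breve T_{A_{L,c}}u=u+\alpha_L(c)$ reduces to showing that $\Psi$ is surjective (the reverse inequality $\breve T_{A_{L,c}}u\le u+\alpha_L(c)$ follows immediately from Proposition \ref{laxoleinik properties}(v)). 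Injectivity of $\Psi$ is straightforward from the graph property of $\G$ and the injectivity of $\phi_H^{-1}$; for surjectivity I would invoke invariance of domain on the connected manifold $M$, so that $\Psi(M)$ is open, compact (hence closed) and nonempty, therefore all of $M$. The second half of (ii) is treated dually.

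For (iii), the function $u$ is again $C^{1,1}$ (being both semiconcave and semiconvex), so $\G$ is a Lipschitz graph. The two halves of (i) give $\phi_H^{-1}(\G)\subseteq\G$ and $\phi_H(\G)\subseteq\G$; applying $\phi_H$ to the first inclusion and combining with the second yields $\phi_H(\G)=\G$, and equivalently $\phi_H^{-1}(\G)=\G$. The main obstacle I anticipate is the surjectivity step in (ii): the fixed-point arguments and the $C^{1,1}$ regularity are fairly mechanical, whereas the global topology of $M$ really enters only at that point, through invariance of domain applied to the continuous injection $\Psi\colon M\to M$.
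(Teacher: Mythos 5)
Your proof is correct, and parts (i) and (iii) follow essentially the same reasoning as the paper; but for part (ii) you take a genuinely different and heavier route. The paper first applies the dual version of Remark \ref{remark laxoleinik}(iv) to the weak Kam solution $\G\in\breve\p$ to obtain the inclusion $\breve\Phi_{A_L}(\G)\subseteq\phi_H^{-1}(\G)$, combines it with part (i) to get $\breve\Phi_{A_L}(\G)\subseteq\G$, and then invokes the elementary fact that one pseudograph contained in another must coincide with it (both are Lipschitz graphs with a.e.\ equal derivatives, so the potentials differ by a constant). Your argument instead reconstructs the dual fixed-point equation by hand: you use $C^{1,1}$ regularity to build the continuous injection $\Psi(x)=\pi\phi_H^{-1}(x,c_x+du_x)$ and prove it surjective via invariance of domain on the compact connected $M$, then read off the required reverse inequality $\breve T_{A_{L,c}}u\ge u+\alpha_L(c)$ at the point $x$ with $\Psi(x)=y$. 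This is valid, and it makes explicit the backward dynamics that the paper's inclusion argument handles implicitly, but it is longer and relies on the global topology of $M$ (compactness and connectedness, plus invariance of domain) where the paper's route is a purely local Lipschitz/measure-theoretic observation about pseudographs. In short: same decomposition of the proposition into its three items, different mechanism for the middle one.
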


\begin{proof}
\mbox{}
\begin{itemize}
\item[(i)]  Let $\G$ be a weak Kam solution, i.e.~$\Phi_{A_L}(\G)=\G$. From Remark \ref{remark laxoleinik}(iv) and Proposition \ref{action properties}(iv) we know that
\[
\G_{|\I_{A_L}(\G)}\ \vdash_{1,\{L\}}\ \Phi_{A_L}(\G),
\]
hence we get
\[
\G= \Phi_{A_L}(\G)\subseteq \phi_H\bigl(\G_{|\I_{A_L}(\G)}\bigr).
\]
Applying $\phi^{-1}_H$ to both sides we get $\phi^{-1}_H(\G)\subseteq\G$, that is the first claim of the statement. The dual claim is obtained analogously, starting from the dual version of Remark \ref{remark laxoleinik}(iv).
\item[(ii)] Let $\G$ be a weak Kam solution belonging to $\breve\p$. We have
\[
\breve\Phi_{A_L}(\G)\subseteq\phi_H^{-1}(\G)\subseteq\G,
\]
where the first inclusion follows from the dual version of Remark \ref{remark laxoleinik}(iv), while the second inclusion follows from part (i) of this Proposition.

It is not difficult to prove that if a pseudograph is contained in another one, then the two must coincide. Thus the inclusion above implies $\breve\Phi_{A_L}(\G)=\G$, that is $\G$ is a dual weak Kam solution, as desired. The dual statement is analogous.
\item[(iii)] Let $\G$ be both a weak Kam and a dual weak Kam solution. It is immediate from part (i) that $\G$ is invariant both in the past and in the future. Moreover, $\G$ has to belong to $\p\cap\breve\p$, hence it is a Lipschitz graph over $M$ (recall that a function both semiconcave and semiconvex is $C^{1,1}$).\qedhere
\end{itemize}
\end{proof}

We have just seen that a weak Kam solution $\G$ is invariant for $\phi_H^{-1}$. Hence the sequence $\phi_H^{-n}(\G)$ is decreasing in $n$. Moreover, one may prove (see \cite{Ber08}, or Proposition \ref{switched} in which we are going to prove some analogous statements in more general situations) that its intersection is a compact invariant set in both past and future, and is given by
\[
\bigcap_{n\in\N} \phi_H^{-n}(\G) =\G_{|\I_{h_L}(\G)}=\G\tilde\wedge\breve\Phi_{h_L}(\G).
\]
where the second equality is just a direct consequence of \eqref{obstruction}.

We now introduce the $c$-Aubry set of $L$, denoted by $\Aa_L(c)$, which appears in Proposition \ref{diffusion}. One of the possible definitions is the following:
\[
\Aa_L(c)=\bigcap\, \bigl\{ \G_{|\I_{h_L}(\G)}: \G \text{ is a $c$-weak Kam solution}\bigr\}\subseteq T^*M.
\]
For a weak Kam solution $\G$, the subset of $M$ given by $\I_{h_L}(\G)=\G\wedge\breve\Phi_{h_L}(\G)$ is also called the projected Aubry set of $\G$.

If $\G\in\p_c$ and $\G'\in\breve\p_c$, it is always true (see Section \ref{pseudographs}) that $\G\tilde\wedge\G'\subseteq T^*M$ is a compact set which is a Lipschitz graph over its projection $\G\wedge\G'\subseteq M$, hence  the same holds true for each of the sets $\G_{|\I_{h_L}(\G)}=\G\tilde\wedge\breve\Phi_{h_L}(\G)$.

It is then clear that $\Aa_L(c)$ is a compact invariant Lipschitz graph over its projection too, being the intersection of compact invariant Lipschitz graphs. It is less obvious from this description, but true, that $\Aa_L(c)$ is non-empty.

Let us denote by $\V_L$ and $\breve\V_L$ respectively the sets of weak Kam solutions and dual weak solutions for $L$. The function $\Phi_{h_L}$ and $\breve\Phi_{h_L}$ are inverse to each other when restricted to these sets. More precisely,
\[
{\Phi_{h_L}\circ\breve\Phi_{h_L}}_{|\V_L}={\rm id},\qquad {\breve\Phi_{h_L}\circ\Phi_{h_L}}_{|\breve\V_L}={\rm id}.
\]
This is due to the formulas \eqref{three times}. A pair of the type $(\G,\breve\Phi_{h_L}(\G))\in\V\times\breve\V$ is, up to a constant, a conjugate weak Kam pair in the sense of Fathi (see \cite{Fat}). Indeed, we see from Proposition \ref{laxoleinik properties}(v) that if $u$ and $\breve u$ are such that $(\G_{c,u},\G_{c,\breve u})\in \V\times\breve\V$ and $\G_{c,\breve u}=\breve\Phi_{h_L}(\G_{c,u})$, then $u-\breve u$ is constant on the Aubry set of $\G$ (this constant is zero if we choose $\breve u=\breve T_{h_L} T_{h_L}u$).

The following property (which has been used in the proof of Proposition \ref{diffusion}) tells us that weak Kam solutions may be seen as a sort of unstable manifolds of the Aubry set of $L$, and dual weak Kam solutions as stable manifolds. 
\begin{proposition}
\label{unstable}
For every $c$-weak kam solution $\G$ and every $z\in\G$, the $\alpha$-limit of $z$ for $\phi^1_H$ is contained in $\Aa(c)$. Analogously, every point in a dual $c$-weak Kam solution is $\omega$-asymptotic to $\Aa(c)$.
\end{proposition}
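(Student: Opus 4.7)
The argument naturally splits into two parts: first one shows that $\alpha(z)$ is contained in $\G_{|\I_{h_L}(\G)}$ for the particular weak KAM solution $\G$ containing $z$, and then one upgrades this to the intersection defining $\Aa_L(c)$.

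First I would apply Proposition \ref{wks&dual}(i), which says that $\G$ is $\phi_H^{-1}$-invariant. Iterating gives $\phi_H^{-n}(z)\in\G$ for every $n\ge 0$, and since $u$ is Lipschitz, $\G$ is bounded in $T^*M$, so $\alpha(z)$ is non-empty and compact. For any $w\in\alpha(z)$, writing $w=\lim_{k} \phi_H^{-n_k}(z)$ and noting that for fixed $j$ and large $k$ one has $\phi_H^{-n_k}(z)\in\phi_H^{-j}(\G)$, closedness of $\phi_H^{-j}(\G)$ gives $w\in\phi_H^{-j}(\G)$ for all $j$. Hence
\[
\alpha(z)\ \subseteq\ \bigcap_{j\in\N}\phi_H^{-j}(\G)\ =\ \G_{|\I_{h_L}(\G)}
\]
by the identity recalled just before the statement.

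The real work is to show $\alpha(z)\subseteq \G'_{|\I_{h_L}(\G')}$ for every other $c$-weak KAM solution $\G'=\G_{c,u'}$, since $\Aa_L(c)$ is the intersection of all such sets. Fix $w\in\alpha(z)$ and set $\gamma_n=\pi(\phi_H^n(w))$; by $\phi_H$-invariance of $\alpha(z)$, the whole orbit of $w$ lies in $\G_{|\I_{h_L}(\G)}\subseteq\G$, which translates into the bi-infinite $u$-calibration
\[
u(\gamma_n)-u(\gamma_m)=A^{n-m}_{L,c}(\gamma_m,\gamma_n)+(n-m)\alpha_L(c),\qquad m\le n.
\]
On the other hand, iterating the weak KAM inequality $u'\le T_{A_{L,c}}u'+\alpha_L(c)$ gives $u'(\gamma_n)-u'(\gamma_m)\le A^{n-m}_{L,c}(\gamma_m,\gamma_n)+(n-m)\alpha_L(c)$, so subtraction shows that $u'-u$ is non-increasing along $(\gamma_n)_{n\in\Z}$.

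The main obstacle is converting this monotonicity into genuine calibration for $u'$. I would do this by choosing, inside the compact $\phi_H$-invariant set $\alpha(z)$, a Birkhoff-recurrent point $w^*$; recurrence together with the monotonicity of $(u'-u)\circ\gamma$ forces this function to be constant along the orbit of $w^*$, and hence turns all the inequalities into equalities. In other words the orbit is also $u'$-calibrated, which by Proposition \ref{peierls identities} (passing from $A_{L,c}$ to $h_{L,c}$) places $w^*$ in $\G'_{|\I_{h_L}(\G')}$, i.e.\ in $\Aa_L(c)$. Since this holds for every recurrent point of every non-empty $\phi_H$-invariant compact subset of $\alpha(z)$, and $\Aa_L(c)$ is $\phi_H$-invariant and closed, a standard minimal-set argument (or passing to the closure of the union of the recurrent sets) extends the conclusion to all of $\alpha(z)$. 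The dual statement is proved symmetrically, exchanging the roles of $\phi_H$ and $\phi_H^{-1}$ and of $u,u'$ with their dual counterparts.
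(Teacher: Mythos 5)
The paper's own proof is a one-line citation to \cite[Proposition 4.3]{Ber08}, so there is no internal argument to compare with, but your blind proof has a genuine gap at the final step, and I'll point it out.

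Your first half is essentially sound (modulo a small technical point: $\phi_H^{-j}(\G)$ need not be closed for a pseudograph $\G$, but since $\phi_H^{-j}(\G)\subseteq\G_{|\I_{A^j}(\G)}$, which \emph{is} compact, the conclusion $\alpha(z)\subseteq\bigcap_j\G_{|\I_{A^j}(\G)}=\G_{|\I_{h_L}(\G)}$ still holds). The monotonicity of $u'-u$ along a bi-infinite $u$-calibrated orbit is also correct and is the right idea.

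The gap is in the last sentence. You show that every \emph{recurrent} point $w^*$ of $\alpha(z)$ lies in $\Aa_L(c)$, and then assert that ``a standard minimal-set argument (or passing to the closure of the union of the recurrent sets) extends the conclusion to all of $\alpha(z)$''. There is no such standard argument. An $\alpha$-limit set need not be minimal, and its recurrent points need not be dense: the classical picture is a heteroclinic cycle (Bowen-type example) which can be the entire $\alpha$-limit set of a nearby orbit, with only the fixed points recurrent. Since $\Aa_L(c)$ is closed but ``being in the Aubry set'' is not inherited from recurrent pieces by heteroclinic connections, nothing forces the non-recurrent part of $\alpha(z)$ into $\Aa_L(c)$ on the basis of what you have proved. (A related temptation, ``a Lyapunov function is constant on an internally chain-transitive set'', is also false in general --- take the identity on $[0,1]$ with $f(x)=x$.)

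The fix is to apply your monotonicity observation along the \emph{approaching} backward orbit of $z$, not along the orbit of $w$. Since $\phi_H^{-n}(z)\in\G$ for all $n\ge 0$ and this orbit is $u$-calibrated, the quantity $n\mapsto (u'-u)\bigl(\pi(\phi_H^{-n}(z))\bigr)$ is non-decreasing; it is bounded because $M$ is compact, hence it converges to a limit $\ell$. Every $w\in\alpha(z)$ is a limit of points $\phi_H^{-n_k}(z)$, so $(u'-u)(\pi(w))=\ell$: the function $(u'-u)\circ\pi$ is \emph{constant} on $\alpha(z)$, not merely along recurrent orbits. Since $\alpha(z)$ is invariant and every orbit in it is $u$-calibrated, the constancy of $u'-u$ along such an orbit forces equality in the weak KAM inequality at each step, i.e.\ the orbit is also $u'$-calibrated. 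That places every $w\in\alpha(z)$ in $\G'_{|\I_{h_L}(\G')}$, and intersecting over all $c$-weak KAM solutions $\G'$ yields $\alpha(z)\subseteq\Aa_L(c)$. (The classical alternative, as in Fathi's book and in \cite{Ber08}, is to show directly $h_{L,c}(\pi(w),\pi(w))=0$ for $w\in\alpha(z)$ by passing the calibration identity along $\phi_H^{-n_k}(z)$ to the limit using the equi-Lipschitz bound on $A^n_{L,c}+n\alpha_L(c)$; both routes avoid any appeal to recurrence in $\alpha(z)$.)
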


\begin{proof}
See \cite[Proposition 4.3]{Ber08}.
\end{proof}

Let us now give one of the possible definitions of the Mather set $\Mm_L(c)$: it is the union of the supports of the invariant measures for $\phi_H^1$ which are contained in $\Aa(c)$. It is a compact invariant set. Finally, the following is one of the possible definitions of the Mañé set $\Nn_L(c)$:
\[
\Nn_L(c)=\bigcup\, \bigl\{ \G_{|\I_{h_L}(\G)}: \G \text{ is a $c$-weak Kam solution}\bigr\}.
\]
This can be proved to be a compact invariant set as well. We have
\[
\Mm_L(c)\subseteq\Aa_L(c)\subseteq\Nn_L(c)\subseteq T^*M.
\]
We refer to \cite{Ber02}, \cite{Fat} or \cite{Mat91action} for a detailed analysis.

\subsection{The semigroup $\Sigma^\infty_c$}
\label{Sic}

In this part we somehow generalize the previous subsection to the case of more than one Tonelli Hamiltonian. Let us recall that our final aim is to get informations about the forcing relation $\vdash_\F$, in order to apply Proposition \ref{diffusion}. We have seen that $\F$-families of costs do give us informations about $\vdash_\F$, and that the time-one actions of the Tonelli Lagrangians in $\F$ are $\F$-families of costs. Moreover, the Proposition \ref{preserved} tells us that being a $\F$-family is a property which is preserved by addition of constants, minimums and compositions. This motivates what follows.
\medbreak

Let $\sigma$ be the unique class of families of costs such that:
\begin{itemize}
\item[(i)] $\sigma$ contains $\{A_L:L\in\F\}$; 
\item[(ii)] $\sigma$ is closed under the operations of addition of constants, minimum and composition (defined in Section \ref{subsect:operations});
\item[(iii)] $\sigma$ is the smallest among those classes satisfying (i) and (ii).
\end{itemize}
By property (ii), $\sigma$ is a semigroup under the operation of composition.
 Recall that the operator $\Phi_A$ associated to a $\F$-family of costs $A$ is the one defined by
\[
\Phi_A(\G_{\eta,u})=\G_{\eta,T_{A_\eta}u}.
\]
By Proposition \ref{preserved} we immediately deduce:

\begin{proposition}
\label{sigma properties}
Every family $A\in\sigma$ is a $\F$-family according to Definition \ref{Ffamily}. Hence, all the conclusions of Remark \ref{remark laxoleinik} apply to $A$, and in particular we have
\[
\G_{|\I_A(\G)}\ \vdash_\F\ \Phi_A(\G)\qquad\forall\,\G\in\p.
\]
\end{proposition}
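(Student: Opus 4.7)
The plan is essentially to unfold the definition of $\sigma$ and appeal to results already proved. Let me denote by $\mathcal{C}$ the class of all $\F$-families of costs in the sense of Definition \ref{Ffamily}. I want to show $\sigma\subseteq\mathcal{C}$, and I will do so by verifying that $\mathcal{C}$ satisfies conditions (i) and (ii) of the definition of $\sigma$; the inclusion then follows from the minimality condition (iii).

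First I would verify that $\{A_L : L\in\F\}\subseteq\mathcal{C}$. This is precisely the content of Proposition \ref{action properties}: property (i) there gives the geometric condition \eqref{well-def}, property (ii) gives continuity, property (iii) gives local equi-semiconcavity, and property (iv) gives the $\F$-flow-type condition with $N=1$ (since $\phi_H$ is exactly the time-one map of $H$). Next, I would verify that $\mathcal{C}$ is closed under the three operations of addition of constants, minimum, and composition; this is exactly Proposition \ref{preserved}. Therefore $\mathcal{C}$ satisfies (i) and (ii) in the definition of $\sigma$, and minimality gives $\sigma\subseteq\mathcal{C}$, which is the first sentence of the statement.

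For the second sentence, once $A\in\sigma$ is known to be an $\F$-family, I simply invoke Remark \ref{remark laxoleinik}. In particular, \eqref{basic forcing} gives the displayed forcing relation $\G_{|\I_A(\G)}\vdash_\F \Phi_A(\G)$ for every $\G\in\p$, which is precisely the asserted consequence.

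I do not anticipate any serious obstacle: the result is a formal consequence of the two pieces already in place (the bricks, and closure under the three operations). The only point worth being slightly careful about is the uniformity of the integer $N$ in the $\F$-flow-type condition (Definition \ref{Ffamily}(iv)), since $N$ may a priori grow under composition; but this is already noted in the proof of Proposition \ref{preserved} (composition of an $(N,\F)$- and $(N',\F)$-flow-type family is $(N+N'),\F$-flow-type, and minimum gives $\max\{N,N'\}$), so each particular $A\in\sigma$ built from finitely many operations still carries some finite $N$, which is all that is needed.
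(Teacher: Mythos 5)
Your proof is correct and is essentially the paper's own argument: the paper simply states that the result follows ``immediately'' from Proposition \ref{preserved} (together with Proposition \ref{action properties} for the base case), and you have merely spelled out the underlying minimality-of-$\sigma$ reasoning and the invocation of Remark \ref{remark laxoleinik}(iv). Your remark about the integer $N$ remaining finite under finitely many compositions is a sensible precaution but not a new ingredient.
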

Let us define
\[
\Sigma=\{\Phi_A:A\in\sigma\}.
\]
By the formula $\Phi_{A'}\circ\Phi_A=\Phi_{A'\circ A}$ it is clear that $\Sigma$ is a semigroup with respect to the composition. For a given $c\in H^1(M,\R)$ we define
\[
\sigma_c=\{A_c:A\in\sigma\}
\]
and call $\sigma^\infty_c$ its closure in $C(M\times M)$:
\[
\sigma^\infty_c={\rm cl}\,(\sigma_c).
\]
Let us stress that the elements of $\sigma^\infty_c$ are just costs and not families of costs. It is clear that $\sigma^\infty_c$ is the smallest class containing $\{A_{L,c}:L\in\F\}$ and closed under addition of constants, minimums, compositions and uniform limits. In particular, it is a semigroup for the composition. Let us point out the important fact that the Peierls barriers $h_{L,c}$ belong to $\sigma^\infty_c$ as well, since the limits involved in their definition are uniform (see the discussion after relation \eqref{peierls extended}).

In order to have good compactness properties, we will often make the assumption that $\F$ is equi-semiconcave, according to the following definition:

\begin{definition}
\label{equi-semiconcave}
We say that a family $\F$ of Tonelli Lagrangians is \emph{equi-semiconcave} if, for every fixed $c\in H^1(M,\R)$, the time-one actions $\{A_{L,c}:L\in\F\}$ form an equi-semiconcave set of functions on $M\times M$, that is
\[
\sup_{L\in\F}\,sc(A_{L,c})<+\infty.
\]
\end{definition}

Of course, any finite family $\F$ of Tonelli Lagrangians is equi-semiconcave. If $\F$ is equi-semiconcave, then by the estimates \eqref{eq:sc relations} we have
\[
\sup_{A\in\sigma^\infty_c}\,sc(A)=\sup_{L\in\F}\,sc(A_{L,c})<+\infty.
\]
hence $\sigma^\infty_c$ is an equi-semiconcave set of functions. In particular, it is an equi-Lipschitz set of functions. By the Arzeli-Ascolà theorem, $\sigma^\infty_c$ is then closed under pointwise limits. Being closed under minimums, it is also closed under countable inf and liminf, unless the resulting function is identically $\pm\infty$. Being a separable space, it is actually closed under arbitrary inf and liminf, unless the resulting function is identically $\pm\infty$. The following property is an immediate consequence of the Arzeli-Ascolà theorem:

\begin{proposition}
\label{prop:compact}
If $\F$ is semiconcave, then $\sigma^\infty_c$ modulo addition of constants is compact.
\end{proposition}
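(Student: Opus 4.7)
The strategy is a direct application of the Arzelà-Ascoli theorem combined with the closure properties of $\sigma^\infty_c$ that have already been established in the preceding paragraphs. The assumption of equi-semiconcavity of $\F$ is used precisely to get equi-continuity of $\sigma^\infty_c$, and the closure under addition of constants lets us normalize representatives.

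First I would recall that, under the equi-semiconcavity assumption, the bound
\[
\sup_{A\in\sigma^\infty_c}\,sc(A)=\sup_{L\in\F}\,sc(A_{L,c})<+\infty
\]
noted in the paragraph before the statement, together with the fact that equi-semiconcave implies equi-Lipschitz (Subsection \ref{sc}), shows that $\sigma^\infty_c$ is an equi-Lipschitz family of continuous functions on the compact space $M\times M$. In particular, the oscillations are uniformly bounded: there is a constant $K>0$ such that $\max A-\min A\le K$ for every $A\in\sigma^\infty_c$.

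Now let $[A_n]_{n\in\N}$ be a sequence in the quotient $\sigma^\infty_c/\R$; choose representatives $A_n\in\sigma^\infty_c$, and replace each $A_n$ by $A_n-\min A_n$. Since $\sigma^\infty_c$ is closed under addition of constants, the normalized functions still belong to $\sigma^\infty_c$ and represent the same equivalence classes $[A_n]$. By the uniform oscillation bound, the normalized sequence is uniformly bounded in $[0,K]$; by the equi-Lipschitz estimate it is also equi-continuous. The Arzelà-Ascoli theorem then yields a subsequence converging uniformly to some $A\in C(M\times M)$. Since $\sigma^\infty_c$ is closed under uniform limits (by its very definition as the closure of $\sigma_c$ in $C(M\times M)$), we have $A\in\sigma^\infty_c$, hence $[A_n]\to[A]$ in $\sigma^\infty_c/\R$. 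This proves sequential compactness, and since the quotient of an equi-Lipschitz family carries a natural metric (given by half the oscillation of the difference), compactness follows.

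There is no serious obstacle here: the only subtle point is making sure that after normalizing to obtain uniform boundedness, the representatives still lie in $\sigma^\infty_c$, which is exactly why the closure of $\sigma^\infty_c$ under addition of constants was emphasized. Every other ingredient is a standard Arzelà-Ascoli argument applied to the equi-semiconcave framework already built up.
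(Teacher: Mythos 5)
Your proof is correct and follows precisely the route the paper intends: the paper declares the proposition ``an immediate consequence of the Arzel\`a--Ascoli theorem'' and offers no further details, while you spell out exactly that argument (equi-semiconcavity $\Rightarrow$ equi-Lipschitz $\Rightarrow$ normalized representatives form an equi-bounded equi-continuous family, Arzel\`a--Ascoli gives a convergent subsequence, closedness of $\sigma^\infty_c$ under uniform limits and under addition of constants closes the loop). Nothing is missing.
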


\medbreak
Let us now fix $c\in H^1(M,\R)$. Every element $A\in\sigma^\infty_c$ is a cost and not a family of costs, hence in general it is not associated to an operator from $\p$ to $\p$. Nevertheless, we can still define the operator $\Phi_A\colon\p_c\to\p_c$ by
\[
\Phi_A(\G_{c,u})=\G_{c,T_A u}.
\]
Since the costs in $\sigma^\infty_c$ are semiconcave, the image $\Phi_A(\p_c)$ is really contained in $\p_c$. Finally, we define
\[
\Sigma^\infty_c=\{\Phi_A: A\in\sigma^\infty_c\},
\]
which is a semigroup of operators from $\p_c$ to itself.

The semigroups $\sic$ and $\Sic$ will play a central role in the sequel. The next proposition states some of their useful properties. Note that item $(iii)$ below is a sort of shadowing property. Recall that $|\cdot|$ indicates half the oscillation of a function.
\begin{proposition}
\label{Sigma properties}
Let $\F$ be equi-semiconcave.
\begin{itemize}
\item[(i)] For every $A,A'\in\sigma^\infty_c$ and $\G,\G'\in\p_c$, it holds
\begin{equation}
\label{continuity estimate}
\|\Phi_A(\G)-\Phi'_{A'}(\G')\|_\p\le |A-A'|+\|\G-\G'\|_\p
\end{equation}
and in particular every $\Phi\in\Sic$ is $1$-Lipschitz.
\item[(ii)] The function $\I_A(\G)$ is upper-semicontinuous in both $A\in\sic$ and $\G\in\p_c$.
\item[(iii)] For all $\Phi_A\in\Sic,\G\in\p_c$ and $\U$ neighborhood of $\Phi_A(\G)$ in $\p_c$ there exists $\Phi'\in\Sigma$ such that $\Phi'(\G)\in\U$ (in particular $\G\vdash_\F \U$ by Proposition \ref{sigma properties}).
\end{itemize}
\end{proposition}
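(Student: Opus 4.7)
My plan is to treat the three parts in sequence, with (i) and (ii) being direct translations to the pseudograph setting of known facts about Lax-Oleinik operators on functions (Proposition \ref{laxoleinik properties}), and (iii) being an approximation argument that exploits (i).

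For part (i), I would represent $\G = \G_{c,u}$ and $\G' = \G_{c,u'}$ with the same cohomology $c$, and use the linear structure on pseudographs to write
\[
\Phi_A(\G) - \Phi_{A'}(\G') = \G_{c, T_A u} - \G_{c, T_{A'} u'} = \G_{0,\, T_A u - T_{A'} u'}.
\]
By the definition of the norm on $E$, this gives $\|\Phi_A(\G) - \Phi_{A'}(\G')\|_\p = |T_A u - T_{A'} u'|$, and the second inequality in \eqref{estimate T} bounds this by $|A - A'| + |u - u'| = |A - A'| + \|\G - \G'\|_\p$. Setting $A = A'$ yields the $1$-Lipschitz claim. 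For part (ii), the identification $\G_{c,u} \mapsto u$ modulo additive constants is continuous from $\p_c$ to $C(M)/\!\sim$, and $\I_A(u)$ is invariant under adding constants to $u$; thus the upper-semicontinuity of $\I_A(\G)$ in $(A,\G)$ descends from the upper-semicontinuity of $(A,u) \mapsto \I_A(u)$ stated in Proposition \ref{laxoleinik properties}(iv).

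The substantive step is (iii), but it is short once (i) is in hand. By definition $\sic$ is the closure in $C(M\times M)$ of $\sigma_c = \{B_c : B \in \sigma\}$, so I can choose a sequence $A_n \in \sigma_c$ with $\|A_n - A\|_\infty \to 0$, hence $|A_n - A| \to 0$. Applying (i) with $A' = A_n$ and $\G' = \G$ yields $\Phi_{A_n}(\G) \to \Phi_A(\G)$ in $\p_c$, so for $n$ large enough $\Phi_{A_n}(\G) \in \U$. Writing $A_n = B_n|_c$ for some family $B_n \in \sigma$, the operator $\Phi_{B_n} \in \Sigma$ acts on elements of $\p_c$ by $\Phi_{B_n}(\G_{c,u}) = \G_{c, T_{(B_n)_c} u} = \G_{c, T_{A_n} u}$, which coincides with $\Phi_{A_n}(\G)$. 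Thus $\Phi' := \Phi_{B_n}$ is the required element. The parenthetical forcing statement $\G \vdash_\F \U$ then follows from Proposition \ref{sigma properties}, which gives $\G_{|\I_{B_n}(\G)} \vdash_\F \Phi_{B_n}(\G) \in \U$.

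The only real bookkeeping point, which I expect to be the main (though mild) obstacle, is keeping straight the distinction between the cost $A \in \sic$ (a single function on $M\times M$) and a family $B \in \sigma$ (parametrized by closed one-forms): one must verify that fixed-cohomology slices of $\sigma$ coincide with $\sigma_c$ so that the approximants $A_n$ lift back to families, which is immediate from the construction. Beyond that, the proof uses no analysis past the Lax-Oleinik estimate \eqref{estimate T}.
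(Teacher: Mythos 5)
Your proof is correct and follows essentially the same route as the paper: (i) reduces to the Lax-Oleinik estimate \eqref{estimate T} via the norm on $\p_c$, (ii) descends from Proposition \ref{laxoleinik properties}(iv) through the quotient by additive constants, and (iii) approximates $A\in\sic$ by a sequence in $\sigma_c$ and invokes (i) together with the definition of $\Sigma$. Your exposition simply spells out details the paper leaves implicit (the cancellation of cohomology terms, the lifting of $A_n$ to a family $B_n\in\sigma$).
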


\begin{proof}
Item $(i)$ is analogous to the estimate \eqref{continuity phi}. Item $(ii)$ is an easy consequence of Proposition \ref{laxoleinik properties} (iv). For item $(iii)$ note that, by the very definition of $\Sic$, there exists a sequence of costs $A_n\in\sigma_c$ converging to $A$ as $n\to+\infty$. Item $(iii)$ then follows from item $(i)$ and the definition of $\Sigma$. 
\end{proof}

In the next proposition we gather some properties of the minimal subsets of the dynamical system $(\p_c,\Sic)$ which will be needed in the next section. We recall that a minimal subset is a compact subset of $\p_c$ which is stable by the semigroup $\Sic$ and which does not contain any proper subset with the same properties. For compact spaces the existence of minimal subsets is a standard Zorn's Lemma argument (actually, for compact metric spaces the Zorn's Lemma is not needed, see the proof in \cite[Theorem 2.2.1]{HasKat02}). Even if $\p_c$ is not compact, this argument can be easily adapted to our case, as the next proposition shows.

Let us remark that the existence of minimal components is the unique point in our construction where the equi-semiconcavity of $\F$ seems to be crucial. In Corollary \ref{d=1} this assumption will be eventually dropped for the case $d=1$.

\begin{proposition}
\label{minimal}
Assume $\F$ is equi-semiconcave.  Then:
\begin{itemize}
\item[(i)] for any $\G\in\p_c$ its orbit $\{\Phi(\G):\Phi\in\Sigma^\infty_c\}$ is compact;
\item[(ii)] there exists a minimal set. In fact, the orbit of any $\G\in\p_c$ contains a minimal set;
\item[(iii)] $\G\in\p_c$ belongs to a minimal component $\m$ if and only if for every $\Phi\in\Sigma^\infty_c$ there exists $\Phi'\in\Sic$ such that $\Phi'\Phi(\G)=\G$; in this case, $\m$ coincides with the orbit of $\G$. In particular, every minimal component $\m$ is transitive: for every $\G,\G'\in\m$ there exists $\Phi\in\Sic$ such that $\Phi(\G)=\G'$.
\end{itemize}
\end{proposition}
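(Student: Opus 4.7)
The plan is to prove the three items in order, leveraging the compactness statement of Proposition \ref{prop:compact} together with the Lipschitz continuity of the action map in Proposition \ref{Sigma properties}(i), and closing with a standard Zorn-type argument for the minimal-component structure. Throughout, write $\mathcal{O}(\G) = \{\Phi(\G):\Phi\in\Sic\}\subseteq\p_c$ for the orbit of $\G$.

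For (i), observe that $\Phi_{A+\lambda}=\Phi_A$ (cf.\ \eqref{compatibility}), so the assignment $A\mapsto\Phi_A(\G)$ factors through the quotient $\sic/\R$ of costs modulo additive constants. By the estimate \eqref{continuity estimate}, the induced map on $\sic/\R$ is continuous, and Proposition \ref{prop:compact} says that $\sic/\R$ is compact; hence $\mathcal{O}(\G)$ is the continuous image of a compact space and therefore compact.

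For (ii), the semigroup property of $\Sic$ gives at once that $\mathcal{O}(\G)$ is $\Sic$-stable, so it is a non-empty compact $\Sic$-stable subset of $\p_c$. Consider the family of all such subsets of $\mathcal{O}(\G)$, partially ordered by reverse inclusion; any totally ordered chain has non-empty intersection by the finite intersection property for compact sets, and that intersection remains compact and $\Sic$-stable. Zorn's lemma then produces a maximal chain-element, i.e.\ a minimal (for inclusion) set contained in $\mathcal{O}(\G)$.

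For (iii), the key observation is that if $\G$ lies in a minimal set $\m$, then $\mathcal{O}(\G)$ is a non-empty compact $\Sic$-stable subset of $\m$, so by minimality $\mathcal{O}(\G)=\m$; in particular $\G\in\mathcal{O}(\G)$. Applied to $\Phi(\G)\in\m$ for an arbitrary $\Phi\in\Sic$, this gives $\m=\mathcal{O}(\Phi(\G))$, so there exists $\Phi'\in\Sic$ with $\Phi'\Phi(\G)=\G$. Conversely, if for every $\Phi$ such a $\Phi'$ exists, then certainly $\G\in\mathcal{O}(\G)$, and any non-empty compact $\Sic$-stable $\m'\subseteq\mathcal{O}(\G)$ must contain $\G$ itself: writing an arbitrary element of $\m'$ as $\Phi_0(\G)$ and choosing $\Phi_1$ with $\Phi_1\Phi_0(\G)=\G$, one has $\G=\Phi_1(\Phi_0(\G))\in\m'$ by $\Sic$-stability of $\m'$; then $\mathcal{O}(\G)\subseteq\m'$, showing $\mathcal{O}(\G)$ is minimal. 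Transitivity of a minimal component $\m$ is then immediate from $\m=\mathcal{O}(\G)$ for every $\G\in\m$. The only delicate point, and hence the main obstacle, is that $\Sic$ has no a priori identity element, so it is not automatic that $\G\in\mathcal{O}(\G)$; the characterization in (iii) is precisely the recurrence substitute needed to circumvent this, playing here the role that almost periodicity plays in the classical theory of minimal sets for group actions.
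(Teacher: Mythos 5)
Your proof is correct and follows essentially the same route as the paper's: item (i) via the compactness of $\sic$ modulo constants and the Lipschitz estimate, item (ii) via the Zorn/finite-intersection argument for compact invariant sets (which the paper compresses to ``a general result in topological dynamics'', having cited the same Zorn argument just before the statement), and item (iii) by showing that the orbit of any point of a minimal set is the whole set and running the recurrence condition in both directions. Your closing remark about the absence of an identity in $\Sic$ being the real subtlety is accurate and is implicitly what the paper's item (iii) is designed to address.
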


\begin{proof}
\mbox{}
\begin{itemize}
\item[(i)] The orbit of a pseudograph $\G\in\p_c$ is the image of the map
\[
\sic\ni A\mapsto \Phi_A(\G).
\]
This map is continuous by the estimate \ref{continuity estimate}. In addition, $\Phi_{A+\lambda}=\Phi_A$ for every constant $\lambda$. By Proposition \ref{prop:compact} we know that $\sic$ modulo addition of constants is compact, thus the image of the map is compact as well.
\item[(ii)] Given $\G\in\p_c$, its orbit is an invariant and compact set, by item (i). By a general result in topological dynamics, it contains a minimal set.
\item[(iii)] Let $\G\in\m$ with $\m$ minimal, and consider $\Phi\in\Sic$. The orbit of $\Phi(\G)$ contains a minimal component by (ii), and is contained in $\m$ because $\m$ is invariant. By minimality of $\m$, the orbit has to coincide with $\m$. Viceversa, suppose that for every $\Phi\in\Sic$ there exists $\Phi'\in\Sic$ such that $\Phi'\Phi(\G)=\G$. We know that the orbit of $\G$ contains a minimal set $\m$ by (ii). The assumption says that every invariant set contained in the orbit of $\G$ must contain $\G$ as well. We deduce that $\m$ coincides with the orbit of $\G$. \qedhere
\end{itemize}
\end{proof}

In order to have a better understanding of the operators in $\Sic$ and the minimal components of $\p_c$, let us now further investigate about these objects in some special cases.

\begin{itemize}
\item[-] \emph{Case $\F=\{L\}$.} This is the case analyzed in Subsection \ref{wkt}. In addition to what already said there, one can show that in this case $\Sic$ is commutative, and that $\Phi\Phi_{h_{L,c}}=\Phi_{h_{L,c}}\Phi=\Phi_{h_{L,c}}$ for every $\Phi\in\Sigma^\infty_c$. Since the image of $\Phi_{h_{L,c}}$ coincides with its fixed points, it is then easy to verify that $\m$ is a minimal component if and only if $\m=\{\G\}$ for some $c$-weak Kam solution $\G$ for $L$.
\item[-] \emph{Commuting Hamiltonians.} If the Hamiltonians in $\F$ commute with each other, i.e.\ their Poisson bracket satisfies 
\[
\{H,G\}+\partial_t H-\partial_t G=0\qquad\forall\,H,G\in\F,
\]
then it is known (see \cite{Cui10} for the time-periodic case and \cite{CuiLi11,Zav10} for the autonomous case) that the associated Lax-Oleinik semigroups commute and that the Hamiltonians in the family share the same weak Kam solutions and the same Peierls barrier which we denote $\{h_c\}_c$. Thus $\Sic$ is commutative and $\Phi\Phi_{h_c}=\Phi_{h_c}\Phi=\Phi_{h_c}$ for every $\Phi\in\Sic$. It is then easy to verify that the minimal components are exactly the $c$-weak Kam solutions for one (hence all) Hamiltonian in $\F$.

\item[-] \emph{General case.} For every $\Phi_A\in\Sic$ it is possible to define an analogous of the Peierls barrier. Indeed, arguing as for the case $A=A_{L,c}$, one can show (see \cite{Zav12}) that there exists a unique real number $\alpha_A$ such that the liminf
\begin{equation}
\label{h_A}
h_A=\liminf_{n\to+\infty} A^n+n\alpha_A
\end{equation}
is real-valued. Exactly as for the Peierls barrier, we have $\Phi_{h_A}\in\Sigma^\infty_c$, and analogous statements to Propositions \ref{peierls identities} and \ref{tfae wks}\emph{(ii)-(iii)-(iv)} hold. In particular the image of $\Phi_{h_A}$ coincides with its fixed points and with the fixed points of $\Phi_A$.

The interpretation of an arbitrary operator in $\Sic$ in terms of Hamiltonian dynamics is not easy. However, something can be said for particular operators. As a sample, let us pick two Hamiltonians $H_1$ and $H_2$ in $\F$, and call $A_1, A_2$ their time-one actions and $h_1, h_2$ their Peierls barriers. In the next two propositions we prove some properties of $\Phi_{A_2\circ A_1}$ and $\Phi_{h_2\circ h_1}$.
\end{itemize}

\begin{proposition}
\label{switched}
Let $H_1,H_2\in\F$, and call $A_1,A_2$ their time-one actions and $\phi_1,\phi_2\colon T^*M\to T^*M$ their time-one maps. Let us also denote $A=A_2\circ A_1$ and $\phi=\phi_2\circ\phi_1$. Let us consider the operator $\Phi_A\colon\p\to\p$. The following hold true:
\begin{itemize}
\item[(i)] for every $\G\in\p$ and every $n\in\N$ it holds
\[
\phi^{-n}\bigl(\Phi_{A^n}(\G)\bigr)\subseteq \G_{|\I_{A^n}(\G)};
\]
\item[(ii)] the fixed points $\G$ of $\Phi_A$ are invariant in the past with respect to $\phi$; more precisely, they satisfy
\[
\phi^{-n}(\G)\subseteq \G_{|\I_{A^n}(\G)}
\]
\item[(iii)] for every fixed point $\G$ of $\Phi_A$, the set $\G_{|\I_{h_A}(\G)}$ is invariant in the past and in the future with respect to $\phi$;
\item[(iv)] for every fixed point $\G$ of $\Phi_A$, every point in $\G$ is $\alpha$-asymptotic to $\G_{|\I_{h_A}(\G)}$ with respect to $\phi$.
\end{itemize}
\end{proposition}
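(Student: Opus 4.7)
\emph{Parts $(i)$ and $(ii)$.} I would apply Proposition \ref{pointwise basic forcing} to the iterated cost $A^n$. While Proposition \ref{preserved} asserts only that $A = A_2\circ A_1$ is of $2,\F$-flow-type, inspecting the composition case in its proof shows more: the intermediate minimizer $z$ witnesses a forcing realised \emph{precisely} by $\phi_2\circ\phi_1 = \phi$, not merely by some generic $2$-step polyorbit. Iterating, the $2n,\F$-flow-type property of $A^n$ is realised by $\phi^n$. Writing $\G = \G_{\eta,u}$ and $v = T_{A^n_\eta}u$, for every differentiability point $x$ of $v$ there exists $y \in \I_{A^n}(\G)$ with $(x,\eta_x+dv_x) = \phi^n(y,\eta_y+du_y)$; applying $\phi^{-n}$ yields $(i)$. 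Part $(ii)$ is then immediate, since a fixed point of $\Phi_A$ is also a fixed point of $\Phi_{A^n} = \Phi_A^n$.

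\emph{Part $(iii)$.} The plan is to establish the formula
\[
\G_{|\I_{h_A}(\G)} \;=\; \bigcap_{n\in\N}\phi^{-n}(\G),
\]
in parallel with the single-Hamiltonian case recalled after \eqref{peierls extended}; the two-sided $\phi$-invariance stated in $(iii)$ then follows at once. By the $h_A$-analogues of Propositions \ref{peierls identities} and \ref{tfae wks} (announced in the paragraph introducing \eqref{h_A}), $\G$ is a fixed point of $\Phi_{h_A}$ as well, hence \eqref{obstruction} gives $\G_{|\I_{h_A}(\G)} = \G\tilde\wedge\breve\Phi_{h_A}(\G)$. For the inclusion ``$\supseteq$'' I would combine $(ii)$ with upper-semicontinuity of $\I$ in the cost (Proposition \ref{Sigma properties}(ii)) and the identity $h_A = \liminf_n(A^n+n\alpha_A)$ to show that any accumulation point of $\bigcap_n\phi^{-n}(\G)$ projects into $\I_{h_A}(\G)$. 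For ``$\subseteq$'' the key is the dual analogue of $(ii)$: $\breve\G := \breve\Phi_{h_A}(\G)$ is a fixed point of $\breve\Phi_A$ via the dual identity $\breve\Phi_A\breve\Phi_{h_A} = \breve\Phi_{h_A}$, hence \emph{future}-invariant under $\phi$; combined with past-invariance of $\G$ coming from $(ii)$, this forces $\G\tilde\wedge\breve\G \subseteq \phi^{-n}(\G)$ for every $n$. This is where the main obstacle lies, since the dual statements and the $\alpha_A$-bookkeeping rely on properties of $h_A$ and $\breve\Phi_{h_A}$ that are only announced at this stage of the exposition.

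\emph{Part $(iv)$.} Given $z\in\G$, let $z_\infty = \lim_k\phi^{-n_k}(z)$ be any $\alpha$-limit point (extraction being possible since all iterates project to the compact manifold $M$ and lie on pseudographs with a common semiconcavity constant). By $(ii)$, $z_{n_k} = (y_k,\eta_{y_k}+du_{y_k})$ with $y_k\in\I_{A^{n_k}}(\G)$, hence there exist $x_k\in M$ with $u(x_k) - u(y_k) = A^{n_k}_\eta(y_k,x_k) + n_k\alpha_A$. Extracting further so that $x_k\to x_\infty$ and passing to the liminf yields $u(x_\infty) - u(y_\infty) \ge h_{A,\eta}(y_\infty,x_\infty)$, while the reverse inequality is just $u = T_{h_A}u$ modulo the additive constant. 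Hence $y_\infty \in \I_{h_A}(\G)$ and $z_\infty \in \G_{|\I_{h_A}(\G)}$, as required.
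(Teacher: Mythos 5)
Parts (i), (ii) and (iv) of your proposal are sound. For (i)--(ii) you follow the paper's intent exactly: the composition case in Proposition \ref{preserved} indeed shows that the flow realising the forcing for $A^n$ is precisely $\phi^n = (\phi_2\circ\phi_1)^n$, and the refinement of Proposition \ref{pointwise basic forcing} then yields the stated inclusion. Your argument for (iv) is a valid and slightly more self-contained variant: the paper instead appeals to the identity $\I_{h_A}(\G)=\bigcap_n\I_{A^n}(\G)$ proved inside (iii) and combines it with (ii), using that $\I_{A^n}(\G)\subseteq\I_{A^N}(\G)$ for $n\ge N$; the calculation you carry out (passing to the $\liminf$ and comparing against $u=T_{h_A}u$) is essentially the proof of the ``$\subseteq$'' half of that identity inlined into (iv), so it gives the same conclusion.

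In (iii), however, there is a genuine gap in the inclusion $\G_{|\I_{h_A}(\G)}\subseteq\bigcap_n\phi^{-n}(\G)$. You correctly observe that $\breve\G:=\breve\Phi_{h_A}(\G)$ is a fixed point of $\breve\Phi_A$, hence future-invariant under $\phi$, and that $\G$ is past-invariant by (ii); but these two facts do \emph{not} ``force'' $\G\tilde\wedge\breve\G\subseteq\phi^{-n}(\G)$. For $z\in\G\tilde\wedge\breve\G$, future-invariance of $\breve\G$ gives only $\phi^n(z)\in\breve\G$, and past-invariance of $\G$ says nothing about $\phi^n(z)$ for $n>0$. What one must show is that the forward orbit of $z$ stays on $\G$ as well, i.e.\ that the $u$-calibration of the orbit of $z$ persists in forward time — and that is precisely the nontrivial content, not a formal consequence of invariance of $\G$ and of $\breve\G$ taken separately. (Your stated concern, that the properties of $h_A$ are ``only announced'', is not the real issue; those analogues are unproblematic.) The paper's route is different: it first establishes the identity $\I_{h_A}(\G)=\bigcap_n\I_{A^n}(\G)$ (whose ``$\subseteq$'' half is the argument you use in (iv)), reduces the target to $\G_{|\bigcap_n\I_{A^n}(\G)}\subseteq\bigcap_n\phi^{-n}(\G)$, and then proves directly that $z\in\G_{|\I_{A^{n+1}}(\G)}$ implies $\phi^n(z)\in\G$. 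This last step is again a calibration argument along the chain of intermediate pseudographs $\Phi_{A^k}(\G)=\G$, $0\le k\le n$, using Proposition \ref{action properties}(iv) together with a refinement of Proposition \ref{pointwise basic forcing} — the same tools you already deployed in part (i). Replacing the dual-invariance shortcut by that argument closes the gap.
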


\begin{proof}
\mbox{}
\begin{itemize}
\item[(i)] This is a more precise version of the relation
\[
\G_{|\I_{A^n}(\G)}\ \vdash_{\F}\ \Phi_{A^n}(\G)
\]
of Remark \ref{remark laxoleinik}\,$(iv)$. It follows by a refinement of the proof of Proposition \ref{pointwise basic forcing}, using property $(iv)$ in Proposition \ref{action properties};
\item[(ii)] it is immediate from item $(i)$ since $\Phi_{A^n}(\G)=\G$ for all $n\in\N$;
\item[(iii)] let $\G$ be a fixed point of $\Phi_A$. It is easy to check that the set $\cap_{n}\phi^{-n}(\G)$, if non-empty, is invariant both in the past and in the future. Hence it suffices to show that this intersection is equal to $\G_{|\I_{h_A}(\G)}$. For this aim, let us first notice that
\begin{equation}
\label{I_h=intersection}
\I_{h_A}(\G)=\bigcap_{n}\I_{A^n}(\G).
\end{equation}
Indeed, from $h_A\circ A^n=h_A$ and relations \ref{compatibility}, it follows that the left-hand side is included in the right-hand side. For the reverse inclusion, write $\G=\G_{c,u}$, consider $\bar y$ belonging to the intersection in the right-hand side and let $x_n\in M$ be such that $u(x_n)=u(\bar y)+A^n(\bar y,x_n)+n\alpha_A$. Then by definition of $h_A$ every accumulation point $x$ of the sequence $x_n$ satisfies
\[
u(x)\ge u(\bar y)+h_A(\bar y,x).
\]
Since $\G$ is a fixed point of $\Phi_A$, we also have $u(x)=\min_y \{u(y)+h_A(y,x)\}$. We deduce that the minimum has to be achieved in $\bar y$, and thus $\bar y\in\I_{h_A}(\G)$. This proves \eqref{I_h=intersection}. In order to conclude the proof of item $(iii)$, it suffices to prove that
\[
\bigcap_{n}\phi^{-n}(\G)=\G_{|\bigcap_{n}\I_{A^n}(\G)}.
\]
The left-hand side is included in the right-hand side by item $(ii)$. The reverse inclusion follows from the fact that, if $z\in\G_{|\I_{A^{n+1}}(\G)}$, then $\phi^n(z)\in\G$. This follows from property $(iv)$ in Proposition \ref{action properties} and a refinement of the proof of Proposition \ref{pointwise basic forcing}.
\item[(iv)] Let $z\in\G$. By equation \ref{I_h=intersection}, it suffices to prove that any $\alpha$-limit of $z$ lies in $\G_{|\I_{A^{N}}(\G)}$ for every $N\in\N$. Since $\G_{|\I_{A^{N}}(\G)}$ is a closed set, it suffices to prove that $\phi^{-n}(z)\in\G_{|\I_{A^{N}}(\G)}$ for $n$ large enough. This is indeed true for $n\ge N$ by item $(ii)$.
\qedhere
\end{itemize}
\end{proof}

\begin{proposition}
\label{switched peierls}
Let $H_1,H_2\in\F$, and call $h_1,h_2$ their Peierls barriers. Let us fix $c\in H^1(M,\R)$ and denote $A_c=h_{2,c}\circ h_{1,c}$. Let us consider the operator $\Phi_{A,c}\in\Sic$ and the subsets $\V_{1,c},\V_{2,c},\V_{A,c}\subset\p_c$ constituted respectively by the fixed points of $\Phi_{h_{1,c}},\Phi_{h_{2,c}}$ and $\Phi_{A,c}$. Then $\V_{A,c}$ is contained in $\V_{2,c}$ and is isometric to a subset of $\V_{1,c}$.
\end{proposition}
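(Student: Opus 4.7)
The proof should exploit two facts from the preceding machinery: (a) the composition rule $\Phi_{h_{2,c}\circ h_{1,c}}=\Phi_{h_{2,c}}\circ\Phi_{h_{1,c}}$, and (b) the idempotency $\Phi_{h_{i,c}}\circ\Phi_{h_{i,c}}=\Phi_{h_{i,c}}$ together with the characterization of fixed points as the image (Propositions \ref{peierls identities} and \ref{tfae wks}, whose analogues for $h_A$ are granted in the general case discussion above).

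For the inclusion $\V_{A,c}\subseteq\V_{2,c}$, the plan is: take $\G\in\V_{A,c}$, so that $\G=\Phi_{A,c}(\G)=\Phi_{h_{2,c}}\bigl(\Phi_{h_{1,c}}(\G)\bigr)$. Thus $\G$ lies in the image of $\Phi_{h_{2,c}}$; by the analogue of Proposition \ref{tfae wks}\,(iv) applied to $H_2$, every element of this image is a fixed point of $\Phi_{h_{2,c}}$, so $\G\in\V_{2,c}$. This is the easy half.

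For the isometric embedding, define
\[
\Psi\colon\V_{A,c}\to\p_c,\qquad \Psi(\G)=\Phi_{h_{1,c}}(\G).
\]
First check that $\Psi(\V_{A,c})\subseteq\V_{1,c}$: using $\Phi_{h_{1,c}}\circ\Phi_{h_{1,c}}=\Phi_{h_{1,c}}$ from Proposition \ref{peierls identities}, we get $\Phi_{h_{1,c}}\bigl(\Psi(\G)\bigr)=\Phi_{h_{1,c}}(\G)=\Psi(\G)$, so $\Psi(\G)$ is fixed by $\Phi_{h_{1,c}}$. Next, verify injectivity and isometry simultaneously by exhibiting a Lipschitz left inverse: for $\G\in\V_{A,c}$ we have
\[
\Phi_{h_{2,c}}\bigl(\Psi(\G)\bigr)=\Phi_{h_{2,c}}\Phi_{h_{1,c}}(\G)=\Phi_{A,c}(\G)=\G,
\]
so $\Phi_{h_{2,c}}$ is a left inverse of $\Psi$ on $\V_{A,c}$. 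Since both $\Phi_{h_{1,c}}$ and $\Phi_{h_{2,c}}$ are $1$-Lipschitz on $\p_c$ by Proposition \ref{Sigma properties}\,(i), for $\G,\G'\in\V_{A,c}$ the chain
\[
\|\G-\G'\|_\p=\|\Phi_{h_{2,c}}\Psi(\G)-\Phi_{h_{2,c}}\Psi(\G')\|_\p\le\|\Psi(\G)-\Psi(\G')\|_\p\le\|\G-\G'\|_\p
\]
forces equalities throughout, giving the required isometry onto its image in $\V_{1,c}$.

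There is no serious obstacle here; the only subtlety is to cite cleanly the general analogue of Propositions \ref{peierls identities} and \ref{tfae wks} for the iterated Peierls barriers $h_{i,c}$, which the text grants in the ``general case'' discussion surrounding formula \eqref{h_A}. The argument is essentially formal, resting on the $1$-Lipschitz estimate \eqref{continuity estimate} and on the fact that $\Phi_{h_{2,c}}$ furnishes a one-sided inverse to $\Phi_{h_{1,c}}$ precisely on the locus $\V_{A,c}$.
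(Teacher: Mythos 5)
Your proof is correct and follows essentially the same route as the paper: identify $\V_{A,c}$ as contained in the image of $\Phi_{h_{2,c}}$ (hence in $\V_{2,c}$), observe that $\Phi_{h_{2,c}}$ is a left inverse of $\Phi_{h_{1,c}}$ on $\V_{A,c}$, and conclude via $1$-Lipschitzness that $\Phi_{h_{1,c}}$ is an isometry onto its image in $\V_{1,c}$. One small remark: you do not actually need the ``general case'' analogue around \eqref{h_A} here, since $h_{1,c}$ and $h_{2,c}$ are the ordinary Peierls barriers of single Lagrangians, so Propositions \ref{peierls identities} and \ref{tfae wks} apply verbatim; also, the $1$-Lipschitz estimate is Proposition \ref{Sigma properties}\,(i), as you cite, rather than item (ii) as the paper's text states.
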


\begin{proof}
Obviously $\V_{A,c}$ is contained in the image of $\Phi_{A,c}$, which is contained in the image of $\Phi_{h_{2,c}}$, that is $\V_{2,c}$. Moreover, since $\Phi_{A,c}=\Phi_{h_{2,c}}\circ\Phi_{h_{1,c}}$ and $\Phi_{A,c}$ is the identity when restricted to $\V_{A,c}$, we get that $\Phi_{h_{2,c}}$ is a left inverse for $\Phi_{h_{1,c}}$ on $\V_{A,c}$. Since both of them are $1$-Lipschitz (cf.\ Proposition \ref{Sigma properties} (ii)), $\Phi_{h_{1,c}}$ has to be an isometry between $\V_{A,c}$ and $\Phi_{h_{1,c}}(\V_{A,c})$, which is a subset of $\V_{1,c}$.
\end{proof}

Let us point out that, if $d=1$, the whole of $\Sigma^\infty_c$ would not be needed for the purposes of this article. Indeed, the heuristic discussion in Section \ref{heuristic twist maps} as well as the proof of Proposition \ref{R(c)=0} show that the Peierls barrier operators $\Phi_{h_L},L\in\F$, would suffice to get optimal results. Nevertheless, if $d>1$, considering the whole of $\Sigma^\infty_c$ gives stronger (though more abstract) results.


\section{The Mather mechanism}
\label{mather mechanism}

Throughout the whole section, the family $\F$ is assumed to be equi-semiconcave in the sense of Definition \ref{equi-semiconcave}, unless otherwise stated. For a subset $S\subseteq M$, we call $S^\bot\subseteq\Omega$ the vector subspace of the smooth closed one-forms whose support is disjoint from $S$ and $[S^\bot]$ its projection on $H^1(M,\R)$. It follows from the finite dimensionality of $H^1(M,\R)$ that there always exists an open set $U\supseteq S$ such that $[U^\bot]=[S^\bot]$. Such a $U$ will be called an \emph{adapted neighborhood} of $S$. 
Let us point out that, if $M=\T$, we have $[S^\bot]=\{0\}$ if and only if $S=\T$, and otherwise $[S^\bot]=H^1(\T,\R)\cong\R$. For a vector subspace $V\subseteq H^1(M,\R)$, we denote the $\eps$-radius ball centered at the origin by $B_\eps(V)$.

\subsection{Outline of Section \ref{mather mechanism}}

We describe a mechanism for the construction of diffusion polyorbits. When $\F=\{L\}$ is a singleton, our construction essentially boils down to the one in \cite{Ber08}. Some of the main ideas come from the seminal paper of Mather \cite{Mat93}.
\smallbreak

\emph{Subsection \ref{subsect:basic step}}. We prove the technical results which are at the core of the Mather mechanism. Basically, they show how a pseudograph $\G$ may force nearby cohomologies, with the sets $\I_A(\G), A\in\sic$ acting as obstructions to this phenomenon.
\smallbreak

\emph{Subsection \ref{heuristic twist maps}}. We heuristically show how the results of the previous subsection apply to polysystems of twist maps on the cylinder.
\smallbreak

\emph{Subsection \ref{section theorem}}. We apply the results of Subsection \ref{subsect:basic step} to prove a general theorem in arbitrary dimension. We discuss some consequences and applications, including the rigorous counterpart of the heuristic picture in Subsection \ref{heuristic twist maps}. 

\medbreak

Loosely speaking, the mechanism works in the following way: we will be able to associate to every $c\in H^1(M,\R)$ a subspace $R(c)\subseteq H^1(M,\R)$ of ``allowed cohomological directions'' for the forcing relation $\dashv\vdash_\F$. In view of Proposition \ref{diffusion}, this gives allowed cohomological directions for the diffusion: the larger the subspace $R(c)$ is, the more are the directions for which connecting orbits starting at $c$ exist. The obstruction for this subspace to be large will be, roughly, the homological size of the sets $\I_A(\G)$, for $\G\in\p_c$ and $A\in\sic$.

\subsection{The basic step}
\label{subsect:basic step}

Let us introduce some notations: for $c\in H^1(M,\R)$, $\G\in\p_c,A\in\sigma^\infty_c$, we define
\[
R_A(\G)=[\I_A(\G)^\bot]=[\G\wedge\breve\Phi_{A}\Phi_A(\G)^\bot]\subseteq H^1(M,\R).
\]
Here the second equality follows from \ref{obstruction}. More generally, for $A_1,\dots,A_n\in\sic$, we define
\begin{multline*}
R_{A_1,\dots,A_n}(\G)
=\Bigl[\I_{A_n\circ\dots\circ A_1}(\G)^\bot+\I_{A_n\circ\dots\circ A_2}(\Phi_{A_1}(\G))^\bot 
\\
+\dots+\I_{A_n}\bigl(\Phi_{A_{n-1}}\circ\dots\circ\Phi_{A_1}(\G)\bigr)^\bot\Bigr].
\end{multline*}
We will see in Lemma \ref{concatenate} that the subspace $R_{A_1,\dots,A_n}(\G)$ should be intended as a subspace of ``allowed cohomological directions for the forcing relation, through the composition $\Phi_{A_n}\circ\dots\circ\Phi_{A_1}$, starting from $\G$\,''. By taking the union over all finite strings $(A_1,\dots,A_n)$, one should get a space of ``allowed cohomological directions for the forcing relation starting from $\G$\,''. Afterward, by intersecting over all $\G$ in $\p_c$, one should get a space of ``allowed cohomological directions for the forcing relation starting from $c$\,'', which is basically what we are looking for in order to apply Proposition \ref{diffusion}. This motivates the following definitions:
\begin{gather}
\label{R(c) definition}
R(\G)=\bigcup_{\substack{A_1,\dots,A_n\in\sic \\ n\in\N}}R_{A_1,\dots,A_n}(\G)\nonumber
\\
R(c)=\bigcap_{\G\in\p_c} R(\G)
\end{gather}
At this stage it is not clear whether $R(\G)$ or $R(c)$ are vector subspaces. In Proposition \ref{equivalent expressions} several equivalent expressions for $R(c)$ will be given. They will imply that $R(c)$ is indeed a vector subspace, and $R(\G)$ is a vector subspace for every $\G$ in a minimal component of $\p_c$. We shall write $R_\F(c)$ when we want to emphasize the dependence on the family $\F$.

\medbreak
The following lemma is the basic key step in the accomplishment of the Mather mechanism. Given a family of costs $A\in\sigma$,\footnote{We recall that the definition of $\sigma$ is given in Section \ref{Sic}.} the lemma shows how a pseudograph $\G$ may force nearby cohomologies, with the set $\I_A(\G)$ acting as an obstruction to this phenomenon. Furthermore, the semicontinuity in $\G$ of $\I_A(\G)$ allows to extend the conclusion to a whole neighborhood of $\G$.

\begin{lemma}
\label{basic step}
Let $A$ be a $\F$-family of costs according to the Definition \ref{Ffamily} (in particular, $A\in\sigma$ will work). Let $\Phi_A$ be the associated operator on pseudographs. Then, for every $\G\in\mathbb P$ and for every neighborhood $\mathbb U$ of $\Phi_A(\G)$ in $\mathbb P$ there exist $N\in\N$, a neighborhood $\mathbb W$ of $\G$ and an $\eps>0$ such that:
\begin{gather*}
\forall\ \G'\in\mathbb W\,,\quad\forall\ c\,\in\,c(\G')+B_\eps R_{A}(\G) \qquad\exists\, \G''\ \text{ such that}
\\
\G''\in\mathbb U,\qquad\G'\ \vdash_{N,\F}\ \G'',\qquad c(\G'')=c.
\end{gather*}
\end{lemma}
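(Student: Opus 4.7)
The plan is to prove the lemma by locally deforming the cohomology of $\G'$ using one-forms supported away from the obstruction set $\I_A(\G)$, and then applying the forcing property provided by Remark \ref{remark laxoleinik}(iv) to $\Phi_A$.

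First I would fix an adapted open neighborhood $V$ of $\I_A(\G)$, so that $[V^\bot]=[\I_A(\G)^\bot]=R_A(\G)$, and exploit the upper-semicontinuity of $\I_A$ (Remark \ref{remark laxoleinik}(ii)) to choose a neighborhood $\W_0$ of $\G$ in $\p$ on which $\I_A(\cdot)\subseteq V$. Then I would fix a continuous linear section
\[
\Psi\colon R_A(\G)\ \to\ V^\bot\subseteq\Omega,
\]
so that for every $c^*\in R_A(\G)$, $\Psi(c^*)$ is a smooth closed one-form of cohomology $c^*$ whose support lies in $M\setminus V$. For $\G'\in\p$ and $c^*\in R_A(\G)$ I set
\[
\tilde\G'(c^*):=\G'+\G_{\Psi(c^*),0}\in\p,\qquad c(\tilde\G'(c^*))=c(\G')+c^*.
\]

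Next I would choose $\eps>0$ and a neighborhood $\W\subseteq\W_0$ of $\G$ small enough that $\tilde\G'(c^*)\in\W_0$ for all $\G'\in\W$ and all $c^*\in B_\eps R_A(\G)$, using the continuity of $\Psi$. By the continuity estimate \eqref{continuity phi} applied to $\Phi_A$, after possibly shrinking $\W$ and $\eps$ further I can also guarantee that
\[
\G'':=\Phi_A(\tilde\G'(c^*))\in\U\qquad\forall\,\G'\in\W,\ c^*\in B_\eps R_A(\G).
\]
Clearly $c(\G'')=c(\G')+c^*$, which gives the cohomology statement.

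It remains to establish the forcing $\G'\vdash_{N,\F}\G''$. Since $A$ is a $\F$-family (hence of $N,\F$-flow-type for some $N$), Remark \ref{remark laxoleinik}(iv) applied to $\tilde\G'(c^*)$ gives
\[
\tilde\G'(c^*)_{|\I_A(\tilde\G'(c^*))}\ \vdash_{N,\F}\ \Phi_A(\tilde\G'(c^*))=\G''.
\]
The key observation is now that $\Psi(c^*)$ vanishes identically on $V$ and $\I_A(\tilde\G'(c^*))\subseteq V$ (because $\tilde\G'(c^*)\in\W_0$); therefore, above $\I_A(\tilde\G'(c^*))$ the pseudographs $\tilde\G'(c^*)$ and $\G'$ coincide as subsets of $T^*M$, i.e.
\[
\tilde\G'(c^*)_{|\I_A(\tilde\G'(c^*))}=\G'_{|\I_A(\tilde\G'(c^*))}\subseteq\G'.
\]
Combining with the previous forcing relation yields $\G'\vdash_{N,\F}\G''$, which completes the proof.

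The main technical point to be careful about is choosing $\W$ and $\eps$ in the right order so that all the continuity and upper-semicontinuity arguments fit together uniformly: one must first produce $V$ and $\W_0$ from the upper-semicontinuity of $\I_A$ at $\G$, then shrink down to ensure $\tilde\G'(c^*)$ stays in $\W_0$ and that $\Phi_A(\tilde\G'(c^*))$ stays in $\U$. The ``$V^\bot$ representatives'' trick, which relies on the finite-dimensionality of $H^1(M,\R)$ and the existence of adapted neighborhoods, is what makes the cohomological perturbation invisible to the obstruction set and thus compatible with the $N,\F$-flow-type forcing on $\I_A$.
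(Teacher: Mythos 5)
Your proof is correct and follows essentially the same route as the paper's: both fix an adapted neighborhood of $\I_A(\G)$, invoke upper-semicontinuity of $\I_A$ and continuity of $\Phi_A$ to confine everything inside controlled neighborhoods, perturb the cohomology with one-forms supported away from that neighborhood, and then apply Remark \ref{remark laxoleinik}(iv) together with the observation that the perturbation is invisible above the obstruction set. The only cosmetic difference is that you realize the cohomological deformation via an explicit continuous linear section $\Psi\colon R_A(\G)\to V^\bot$ (giving $\eps$ directly), whereas the paper argues through continuity of $(\G,\nu)\mapsto\G+\G_{\nu,0}$ and openness of the projection of a neighborhood $W\subset U^\bot$ onto cohomology; these are equivalent and both rely on the finite-dimensionality of $H^1(M,\R)$.
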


\begin{proof}
Let us fix $\G$, $\mathbb U$ and an adapted neighborhood $U$ of $\I_A(\G)$. The set function $\G\mapsto \I_A(\G)$ is upper semicontinuous, thus there exists a neighborhood $\W'$ of $\G$ such that $\I_A(\G')\subseteq U$ for all $\G'\in\W'$. Moreover, by continuity of $\Phi_A$, we can suppose that $\Phi_A(\W')\subseteq\U$. The function
\[
\mathbb P\times U^\bot\ni(\G,\nu)\mapsto \G+\G_{\nu,0}
\]
is continuous, hence there exists a neighborhood $\W$ of $\G$ and a neighborhood $W$ of $0$ in $U^\bot$ such that $\W+\G_{W,0}\subseteq\W'$. Projections are open maps, thus the projection of $W$ on the cohomology contains a ball $B_\eps[U^\bot]$ centered at $0$. With these choices of $\W$ and $\eps$, let $\G'\in\W$ and $c\in c(\G')+B_\eps [U^\bot]$. We can then take as $\G''$ the pseudograph $\Phi_A(\G'+\G_{\nu,0})$ where $\nu\in W$ satisfies $[\nu]=c-c(\G')$. Indeed, by Remark \ref{remark laxoleinik}(iv) we find $N$ such that
\[
\G'\ \vdash_{0,\F}\ \G'_{|U}=\bigl(\G'+\G_{\nu,0}\bigr)_{|U}\ \vdash_{N,\F}\ \Phi_A\bigl(\G'+\G_{\nu,0}\bigr)=\G''.
\qedhere
\]
\end{proof}

The Lemma \ref{basic step} easily extends to operators in $\Sigma^\infty_c$.

\begin{proposition}
\label{basic limit step}
Let $\Phi_A\in\Sigma^\infty_c$. Then, for every $\G\in\mathbb P_{c}$ and for every neighborhood $\U$ of $\Phi_A(\G)$ in $\mathbb P$ there exist $N\in\N$, a neighborhood $\mathbb W$ of $\G$ and an $\eps>0$ such that:
\begin{gather*}
\forall\, \G'\in\mathbb W\,,\ c\in c(\G')+B_\eps R_A(\G) \qquad\exists\, \G''\quad\text{such that}
\\
\G''\in\U,\qquad\G'\ \vdash_{N,\F}\ \G'',\qquad c(\G'')=c.
\end{gather*}
\end{proposition}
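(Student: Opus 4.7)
The plan is to reduce the statement to Lemma \ref{basic step} by approximating $A$ with costs coming from genuine $\F$-families in $\sigma$. Fix $\G\in\p_c$ and a neighborhood $\U$ of $\Phi_A(\G)$ in $\p$. Pick an adapted open neighborhood $U\supseteq\I_A(\G)$, so that $[U^\bot]=[\I_A(\G)^\bot]=R_A(\G)$. By the very definition $\sigma^\infty_c=\mathrm{cl}(\sigma_c)$, there exists a sequence $A_n\in\sigma_c$ with $A_n\to A$ uniformly, and each $A_n=(\tilde A_n)_c$ for some $\tilde A_n\in\sigma$; by Proposition \ref{sigma properties}, every $\tilde A_n$ is an $\F$-family in the sense of Definition \ref{Ffamily}, so Lemma \ref{basic step} is applicable to it.

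Using the continuity estimate in Proposition \ref{Sigma properties}(i) one gets $\Phi_{A_n}(\G)\to\Phi_A(\G)$ in $\p_c$. Using the upper semicontinuity of $(A,\G)\mapsto\I_A(\G)$ stated in Proposition \ref{Sigma properties}(ii), for $n$ large enough one simultaneously has $\Phi_{A_n}(\G)\in\U$ and $\I_{A_n}(\G)\subseteq U$. Fix such an $n$ and choose an open neighborhood $\U'\subseteq\U$ of $\Phi_{\tilde A_n}(\G)=\Phi_{A_n}(\G)$.

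Apply Lemma \ref{basic step} to the $\F$-family $\tilde A_n$, the pseudograph $\G$, and the neighborhood $\U'$ of $\Phi_{\tilde A_n}(\G)$. This produces $N\in\N$, a neighborhood $\W$ of $\G$ in $\p$, and $\eps>0$ such that for every $\G'\in\W$ and every $c'\in c(\G')+B_\eps R_{\tilde A_n}(\G)$ there exists $\G''\in\U'$ with $\G'\vdash_{N,\F}\G''$ and $c(\G'')=c'$. Since $\I_{A_n}(\G)\subseteq U$, we have
\[
R_A(\G)=[U^\bot]\ \subseteq\ [\I_{A_n}(\G)^\bot]\ =\ R_{\tilde A_n}(\G),
\]
so in particular $B_\eps R_A(\G)\subseteq B_\eps R_{\tilde A_n}(\G)$; combined with $\U'\subseteq\U$, this shows that the same $N$, $\W$, $\eps$ already witness the conclusion of the proposition.

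The only delicate point is ensuring the compatibility between the limit operator $\Phi_A$ and its approximants $\Phi_{\tilde A_n}$: it is precisely the upper semicontinuity of $\I$ which guarantees that $R_{\tilde A_n}(\G)\supseteq R_A(\G)$ for $n$ large, so that the cohomological directions supplied by Lemma \ref{basic step} applied to $\tilde A_n$ include all those prescribed by the limiting cost $A$. Everything else is a routine packaging of the approximation together with the open/neighborhood manipulations of Lemma \ref{basic step}.
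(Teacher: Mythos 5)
Your proof is correct and follows essentially the same approach as the paper: approximate $A\in\sigma^\infty_c$ by a nearby $A'\in\sigma$ (the paper invokes Proposition \ref{Sigma properties}(iii) and, implicitly, the upper-semicontinuity from part (ii)), observe that $R_A(\G)\subseteq R_{A'}(\G)$ via the adapted neighborhood, and then apply Lemma \ref{basic step} to the approximant. Your write-up just makes the sequence-approximation and the $\U'$ bookkeeping more explicit than the paper's terse two-line argument.
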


\begin{proof}
Let us fix $\G$ and $\U$, and let us consider $\I_A(\G)$ and one of its adapted neighborhoods $U$. By Proposition \ref{Sigma properties} (iii) there exists $A'\in\sigma$ such that $\Phi_{A'}(\G)\in\U$ and $\I_{A'}(\G)\subseteq U$. This implies
\[
R_{A'}(\G)=[\I_{A'}(\G)^\bot]\supseteq [U^\bot]=[\I_A(\G)^\bot]=R_A(\G).
\]
We apply the Lemma \ref{basic step} and we get the result.
\end{proof}

In the following lemma we prove two similar results which show how Proposition \ref{basic limit step} has a good behavior under composition. The second version is in principle stronger but we will see that the first version would eventually lead to the same results, at least for an equi-semiconcave family $\F$. Therefore \textit{a posteriori} the second version is not strictly needed here.

The main point in both results is that, if we compose several operators in $\Sigma^\infty_c$, the set of allowed directions which we get is greater than just the union of the allowed directions obtained by applying separately Proposition \ref{basic limit step} to each operator. In fact, we obtain the vector subspace generated by this union.

\begin{lemma}
\label{concatenate}
Let $\Phi_{A_1},\dots,\Phi_{A_n}\in\Sigma_c^\infty$. Then: for every $\G\in\mathbb P$ and for every neighborhood $\U$ of $\Phi_{A_n}\circ\dots\circ\Phi_{A_1}(\G)$ in $\mathbb P$ there exist $N\in\N$, a neighborhood $\mathbb W$ of $\G$ and an $\eps>0$ such that:
\begin{gather*}
\forall\, \G'\in\mathbb W,\ \forall\, c\in c(\G')+B_\eps \Bigl(R_{A_1}(\G)+R_{A_2}(\Phi_{A_1}(\G))+\dots+R_{A_n}\bigl(\Phi_{A_{n-1}}\circ\dots\circ\Phi_{A_1}(\G)\bigr)\Bigr)
\\
\exists\, \G'':\qquad\G''\in\U,\qquad\G'\ \vdash_{N,\F}\ \G'',\qquad c(\G'')=c.
\end{gather*}

\medbreak

\noindent\textbf{Stronger version.\ \ }Under the same assumptions,
\begin{align*}
\forall\, \G'\in\mathbb W,&\quad \forall\ c\,\in\ \, c(\G')+B_\eps R_{A_1,\dots,A_n}(\G)
\\
&\exists\, \G'':\quad\G''\in\U,\qquad\G'\ \vdash_{N,\F}\ \G'',\qquad c(\G'')=c.
\end{align*}
This version is stronger because, in general, $R_{A_1,\dots,A_n}(\G)
$ may be strictly larger than $R_{A_1}(\G)+R_{A_2}(\Phi_{A_1}(\G))+\dots+R_{A_n}\bigl(\Phi_{A_{n-1}}\circ\dots\circ\Phi_{A_1}(\G)\bigr)$.
\end{lemma}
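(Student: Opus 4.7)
I would prove both versions by induction on $n$, with the base case $n=1$ being a direct application of Proposition~\ref{basic limit step} (under which the two versions coincide).

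For the inductive step of the weak version, set $\G^{(n-1)} = \Phi_{A_{n-1}} \circ \dots \circ \Phi_{A_1}(\G)$. I would first apply Proposition~\ref{basic limit step} to the single operator $\Phi_{A_n}$ with input pseudograph $\G^{(n-1)}$ and target $\U$, obtaining parameters $(N_n, \W_n, \eps_n)$ that permit cohomology shifts in $R_{A_n}(\G^{(n-1)})$. Then I would apply the inductive hypothesis to the shortened sequence $(A_1, \dots, A_{n-1})$ with input $\G$ and target $\W_n$, obtaining parameters $(N', \W, \eps')$ that permit shifts in $\sum_{k=1}^{n-1} R_{A_k}(\Phi_{A_{k-1}} \circ \dots \circ \Phi_{A_1}(\G))$. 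Given $\G' \in \W$ and a cohomology perturbation $v$ in the full sum, I would decompose $v = v' + v_n$ with $v'$ and $v_n$ in the respective subspaces; finite-dimensionality of $H^1(M, \R)$ supplies a norm-equivalence constant $C$ ensuring $\|v'\|, \|v_n\| \le C \|v\|$, so the choice $\eps = \min\{\eps', \eps_n\}/C$ forces both pieces below threshold. Chaining the two polyorbits, first the inductive one from $\G'$ to some $\G''' \in \W_n$ with cohomology shift $v'$, then Proposition~\ref{basic limit step} to reach $\G'' \in \U$ with further shift $v_n$, concludes the step with $N = N' + N_n$.

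For the stronger version, the key observation is that by Proposition~\ref{preserved}, for every $k$ the composition $A_n \circ \dots \circ A_k$ lies in $\sigma$. Consequently, Proposition~\ref{basic limit step} applied to the composed operator $\Phi_{A_n \circ \dots \circ A_k} = \Phi_{A_n} \circ \dots \circ \Phi_{A_k}$ grants access to the strictly larger subspace $[\I_{A_n \circ \dots \circ A_k}(\cdot)^\bot] \supseteq [\I_{A_k}(\cdot)^\bot]$, the inclusion following from $\I_{A_n \circ \dots \circ A_k}(\cdot) \subseteq \I_{A_k}(\cdot)$ via~\eqref{compatibility}. Running the same inductive scheme as above but using at each step $k$ this bundled operator in place of $\Phi_{A_k}$ enlarges the accessible summand from $R_{A_k}(\cdot)$ to $R_{A_n \circ \dots \circ A_k}(\cdot)$, and the $n$-fold sum of these enlarged summands coincides precisely with $R_{A_1, \dots, A_n}(\G)$.

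The main obstacle throughout is the norm-accounting across the $n$-fold perturbation decomposition, which is handled cleanly via finite-dimensionality of $H^1(M, \R)$. An additional subtlety for the stronger version is the orchestration of the forcing chain when a composed operator $\Phi_{A_n \circ \dots \circ A_k}$ is invoked at an intermediate inductive stage: each one-form perturbation must be chosen small and supported outside an adapted neighborhood of the corresponding composed obstruction set, with the chain of forcings closing via upper-semicontinuity of $\I$ (Proposition~\ref{Sigma properties}(ii)) and the composed-family instance of~\eqref{basic forcing} afforded by Proposition~\ref{preserved}.
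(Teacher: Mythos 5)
Your treatment of the weaker version is correct and coincides with the paper's (the paper applies Proposition~\ref{basic limit step} twice for $n=2$ and leaves the general case implicit; your inductive framing is the same argument, and the norm-equivalence observation about $B_{\eps'}Z'+B_{\eps''}Z''\supseteq B_\eps(Z'+Z'')$ is exactly what the paper invokes).

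Your argument for the stronger version, however, has a genuine gap: you propose to replace $\Phi_{A_k}$ at step $k$ by the bundled operator $\Phi_{A_n\circ\dots\circ A_k}$ and then run the same inductive scheme. But then the total composition applied to $\G$ is $\Phi_{A_n}\circ(\Phi_{A_n}\circ\Phi_{A_{n-1}})\circ\dots\circ(\Phi_{A_n}\circ\dots\circ\Phi_{A_1})$ rather than $\Phi_{A_n}\circ\dots\circ\Phi_{A_1}$, so the output $\G''$ does not land in the prescribed neighborhood $\U$ of $\Phi_{A_n}\circ\dots\circ\Phi_{A_1}(\G)$. The inclusion $\I_{A_n\circ\dots\circ A_k}(\cdot)\subseteq\I_{A_k}(\cdot)$ does enlarge the allowed directions, but you cannot exploit it by swapping in the composed operator at an intermediate stage without changing what you eventually converge to.

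The paper's proof of the stronger version works differently: it keeps the original operators $\Phi_{A'_1},\dots,\Phi_{A'_n}$ (with $A'_k\in\sigma$ approximating $A_k$), but shows that the one-form perturbation $\eta_k$ injected before applying $\Phi_{A'_k}$ only needs to be supported outside an adapted neighborhood of the \emph{composed} obstruction set $\I_{A'_n\circ\dots\circ A'_k}(\Phi_{A'_{k-1}}\circ\dots\circ\Phi_{A'_1}(\G'))$, not the larger $\I_{A'_k}(\cdot)$. The reason is that the chain of minimizing points $y,z,\dots$ produced by Proposition~\ref{pointwise basic forcing} at each stage must, by the composed optimality, actually lie in the smaller composed sets — and upper-semicontinuity (a generalization of Remark~\ref{remark laxoleinik}(ii)) ensures this persists under the small perturbation. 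Thus $\eta_k$ vanishes at the relevant intermediate minimizer, the forcing chain closes back to $\G'$, and yet the pseudograph $\G''$ is genuinely $\Phi_{A'_n}\circ\dots\circ\Phi_{A'_1}$ applied to a perturbed $\G'$, so it stays near $\U$. That is the mechanism by which the allowed directions are enlarged without altering the target; your proposal misses this and would need to be replaced by the paper's finer pointwise tracking.
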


\begin{proof}
We suppose for simplicity $n=2$. The result is obtained by applying two times the Proposition \ref{basic limit step} and by noticing that if $Z',Z''$ are linear subspaces of a normed space and $\eps',\eps''>0$, then $B_{\eps'}Z'+B_{\eps''}Z''$ contains $B_\eps(Z'+Z'')$ for some $\eps>0$.
\end{proof}

\begin{proof}[Proof of the stronger version.]
Let us suppose $n=2$ for simplicity. Let us consider $\G\in\p$ and a neighborhood $\U$ of $\Phi_{A_2}\Phi_{A_1}(\G)$. Let $U_1$ and $U_2$ be adapted neighborhoods in $M$ of $\I_{A_2\circ A_1}(\G)$ and $\I_{A_2}(\Phi_{A_1}(\G))$ respectively. By Proposition \ref{Sigma properties}, there exist $A'_1,A'_2\in\sigma$ and a neighborhood $\W'$ of $\G$ in $\p$ such that
\[
\Phi_{A'_2}\Phi_{A'_1}(\G')\in\U, \quad\I_{A'_2\circ A'_1}(\G')\subseteq U_1\quad \text{and}\quad \I_{A'_2}(\Phi_{A'_1}(\G'))\subseteq U_2\qquad\forall\,\G'\in\W'.
\]

Let us now consider $\eta_1\in U_1^\bot$ and $\eta_2\in U_2^\bot$. Given $\G'=\G_{\eta,u}\in\W'$, we have
\[
\Phi_{A'_2}\bigl(\Phi_{A'_1}(\G'+\G_{\eta_1,0})+\G_{\eta_2,0}\bigr)=\G_{\eta+\eta_1+\eta_2,v}
\]
with
\[
v:=T_{A'_{2,\eta+\eta_1+\eta_2}}w\qquad w:=T_{A'_{1,\eta+\eta_1}}u.
\]
Let $x\in M$ be a point such that $dv_x$ exists. By Proposition \ref{pointwise basic forcing}, if $z$ is a point which realizes the minimum in the formula for $v(x)$, then $dw_z$ exists and
\[
dw_z+\eta_z+\eta_{1,z}+\eta_{2,z}\quad \vdash_{N_2,\F}\quad dv_x+\eta_x +\eta_{1, x} +\eta_{2, x}
\]
for some $N_2\in\N$. In the same way, if $y$ realizes the minimum in the formula for $w(z)$, then
\[
du_y+\eta_y+\eta_{1,y}\quad\vdash_{N_1,\F}\quad dw_z+\eta_z+\eta_{1,z}
\]
for some $N_1\in\N$. 

A generalization of the upper-semicontinuity result in \ref{remark laxoleinik}\,$(ii)$ shows that if $[\eta_1+ \eta_2]\in B_\eps[U_1^\bot+U_2^\bot]$ with $\eps$ small enough, then $y\in U_1$ and $z\in U_2$. We thus have $\eta_{1,y}=0$ and $\eta_{2,z}=0$ and therefore
\[
du_y+\eta_y\quad\vdash_{N_1+N_2,\F}\quad dv_x+\eta_x+\eta_{1,x}+\eta_{2,x}
\]
which is to say
\[
\G'=\G_{\eta,u}\quad \vdash_{N_1+N_2,\F}\quad \G_{\eta+\eta_1+\eta_2,v}.
\]
The proof is now completed with $\G''=\G_{\eta+\eta_1+\eta_2,v}$, up to choosing $\G'$ in a smaller neighborhood $\W\subseteq\W'$ in such a way that $\eps$ can be fixed independently of $\G'$.
\end{proof}

Note that, trivially, Lemma \ref{basic step} is a particular case of Proposition \ref{basic limit step}, which in turn is a particular case of Lemma \ref{concatenate}, hence just the latter will be used in the sequel.

\subsection{Heuristic application to twist maps}
\label{heuristic twist maps}

Even without the main general theorem \ref{theorem} of the next subsection, it is possible at this stage, using just the Lemma \ref{concatenate}, to derive some results about the presence of diffusion in polysystems of exact-symplectic twist maps on the cylinder. The discussion in this subsection will just be an heuristic one, even if everything could be made rigorous. The corresponding rigorous results will be proven in greater generality in the next subsection (Proposition \ref{R(c)=0} and Corollary \ref{d=1}).

Let $\F=\{H_1,H_2\}$ be a family of two Tonelli Hamiltonians on $\T\times\R$ and call $L_1,L_2$ the corresponding Lagrangians. Let us fix $c\in H^1(M,\R)$. We now show that just two scenarios are possible: 
\begin{itemize}
\item[-] there exists a circle of cohomology $c$ which is invariant for both $H_1$ and $H_2$ (which obviously provides an obstruction to diffusion); the only cohomology class forced by $c$ is $c$ itself; $R(c)=\{0\}$;
\item[-] there does not exists a circle as above; in this case $c$ forces a whole neighborhood of cohomology classes (and thus there exists diffusion in the sense of Proposition \ref{diffusion}); $R(c)=H^1(\T,R)\cong\R$.
\end{itemize}

Indeed, suppose that there exists a circle of cohomology $c$ which is invariant for both $H_1$ and $H_2$. It is standard that it can be identified with a pseudograph $\G$ which is invariant for both $\phi_{H_1}$ and $\phi_{H_2}$. In particular, by the very definition of forcing relation in Section \ref{forcing definition}, $\G$ is the only pseudograph forced by $\G$, and thus $c$ is the only cohomology class forced by $c$. Moreover, we deduce by Lemma \ref{concatenate} and the definition of $R(\G)$ that $R(\G)=\{0\}$, thus $R(c)=\{0\}$.

Vice versa, let us suppose that there does not exist such a common invariant circle. Let us consider $\G\in\p_c$, and let us apply $\Phi_{h_2}\circ\Phi_{h_1}$ to it ($h_1$ and $h_2$ are the Peierls barrier of $H_1,H_2$). By the first version of Lemma \ref{concatenate}, we get
\begin{equation}
\label{heuristic forcing}
\G\ \vdash_\F\ c+ B_\eps\bigl( R_{{h_1}}(\G)+R_{{h_2}}(\Phi_{h_1}(\G))\bigr)\qquad\forall\ \G\in\p_c.
\end{equation}
Recall that the image of $\Phi_{h_1}$ is contained in $\p$ and consists precisely of the weak Kam solutions for $H_1$, while the image of $\breve\Phi_{h_2}$ is contained in $\breve\p$ and consists of the dual weak Kam solutions for $H_2$. By assumption there do not exist common invariant circles, hence, in view of Proposition \ref{wks&dual},
\[
\Phi_{h_1}(\G)\neq\breve\Phi_{h_2}\Phi_{h_2}\Phi_{h_1}(\G).
\]
This implies (due to $d=1$) that $R_{{h_2}}(\Phi_{h_1}(\G))=H^1(\T,\R)$ regardless of $\G\in\p_c$. Thus the formula \eqref{heuristic forcing} implies that every $\G\in\p_c$ forces a whole neighborhood of cohomology classes. In that formula, $\eps$ depends in principle on $\G$, but one can show that by compactness it is possible to choose it uniformly in $\G$. Therefore $c$ forces a whole neighborhood of cohomology classes, as claimed. Since $R(\G)\supseteq R_{{h_2}}(\Phi_{h_1}(\G))$, the discussion also proves that $R(\G)=H^1(\T,\R)$ for every $\G\in\p_c$, thus $R(c)=H^1(\T,\R)$.
\medbreak

Notice how in this one-dimensional case our construction is optimal, in the following sense: the obstructions to the mechanism (i.e.\ the ``homological size'' of the sets $\I_{A}(\G)$) are real obstructions to the diffusion (i.e.\ the common invariant circles). On the contrary, in $d>1$ the construction will likely give just sufficient conditions for the diffusion: the obstructions to this mechanism may be circumvented by a different diffusion mechanism (such as, for instance, the Arnold mechanism presented by Bernard in \cite{Ber08}).

\subsection{A general theorem and some applications}
\label{section theorem}

We can summarize the argument used in Subsection \ref{heuristic twist maps} for the case $d=1$ by saying that we have applied Lemma \ref{concatenate} to $\Phi_{h_{2}}\circ\Phi_{h_{1}}$, and the result turned out to be optimal (so that there was no need to consider any other $\Phi\in\Sic$). Moreover, switching the order and considering $\Phi_{h_{1}}\circ\Phi_{h_{2}}$ would have led to the same result. The generalization of this argument to an arbitrary dimension $d$ is not completely straightforward: the choice of the operator could in principle make a difference, and it is less clear if the allowed directions which one obtains are optimal or not. 

In order to overcome these difficulties, we will adopt a slightly more abstract approach. This will give stronger conclusions, at the cost of a certain difficulty to interpret the obstructions which we will found.

We start with a ``raw'' result which follows immediately from Lemma \ref{concatenate}.

\begin{proposition}
Let $c$ be fixed. For any $\G\in\p_c$ and any finite string $s=(A_1,\dots,A_n)$ of elements of $\sic$, there exist $\eps(\G,s)>0$ such that
\begin{equation}
\label{raw}
c\ \vdash_\F\ c + \bigcap_{\G\in\p_c}\ \   \bigcup_{\substack{s=(A_1,\dots,A_n) \\ n\in\N}} B_{\eps(\G,s)}\bigl(R_{A_1,\dots,A_n}(\G)\bigr).
\end{equation}
\end{proposition}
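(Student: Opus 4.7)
The proof is a direct application of the stronger version of Lemma \ref{concatenate}, together with a compactness argument at the end to secure the uniform bound required by the definition of $\vdash_\F$.

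First, for each $\G\in\p_c$ and each finite string $s=(A_1,\dots,A_n)$ of elements of $\sic$, I apply the stronger version of Lemma \ref{concatenate} to the composition $\Phi_{A_n}\circ\cdots\circ\Phi_{A_1}$ at base point $\G$, choosing some fixed neighborhood (say $\U=\p$) of its image. The lemma delivers an integer $N(\G,s)$, a neighborhood $\W(\G,s)\subseteq\p$ of $\G$, and the positive number $\eps(\G,s)$ whose existence is asserted in the statement; these are characterized by the property that for every $\G'\in\W(\G,s)$ and every $c'\in c(\G')+B_{\eps(\G,s)}(R_{A_1,\dots,A_n}(\G))$ there exists some $\G''\in\U$ of cohomology $c'$ satisfying $\G'\vdash_{N(\G,s),\F}\G''$.

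Next, fix any vector $v$ in the intersection on the right-hand side of the displayed formula. By unpacking that intersection, for each $\G\in\p_c$ there exists a finite string $s(\G)$ with $v\in B_{\eps(\G,s(\G))}(R_{s(\G)}(\G))$. Specializing the conclusion of the lemma (recalled in the previous paragraph) to $\G'=\G$ and $c'=c+v=c(\G)+v$, I obtain some $\G''\in\p_{c+v}$ joined to $\G$ by a finite polyorbit of length at most $N(\G,s(\G))$. This already shows that every $\G\in\p_c$ is connected by a finite polyorbit to some element of $\p_{c+v}$.

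The one remaining point is that the strict definition of $\vdash_\F$ requires an $N$ which is uniform over $\G\in\p_c$, whereas the construction above produces only the $\G$-dependent $N(\G,s(\G))$. I would handle this in two steps. Pick any $L\in\F$; by Proposition \ref{sigma properties} applied to $A_L\in\sigma$, every $\G\in\p_c$ satisfies $\G\vdash_{1,\F}\Phi_{A_L}(\G)$, so $\p_c\vdash_{1,\F}\Phi_{A_L}(\p_c)$, and by Remark \ref{remark laxoleinik}(iii) the closure $K:=\overline{\Phi_{A_L}(\p_c)}$ is a compact subset of $\p_c$. On $K$ the open cover $\{\W(\G,s(\G))\}_{\G\in K}$ admits a finite subcover $\W(\G_1,s(\G_1)),\dots,\W(\G_k,s(\G_k))$, and setting $N:=1+\max_{i\le k}N(\G_i,s(\G_i))$ gives the required uniform bound: every $\G\in\p_c$ first maps into $K$ in one step, then falls into some $\W(\G_i,s(\G_i))$, and is there joined to a pseudograph of cohomology $c+v$ by a polyorbit of length at most $N(\G_i,s(\G_i))$. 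This compactness step is the main technical point beyond the essentially immediate appeal to Lemma \ref{concatenate}.
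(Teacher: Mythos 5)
Your proof is correct and rests on the same key ingredient as the paper's: a direct application of Lemma~\ref{concatenate} (in its stronger form) with the choice $\eps(\G,s)$ coming from that lemma. Where you differ is in explicitly addressing the uniformity of $N$ required by the definition of $\vdash_\F$: you push $\p_c$ into the relatively compact set $\overline{\Phi_{A_L}(\p_c)}$ in one step and extract a finite subcover of the neighborhoods $\W(\G_i,s(\G_i))$, which yields the uniform bound $N=1+\max_i N(\G_i,s(\G_i))$. The paper's own proof of this proposition is a one-liner (``Recall that $c\vdash_\F c'$ if and only if $\G\vdash_\F c'$ for all $\G\in\p_c$; the result is then a consequence of Lemma~\ref{concatenate}'') and does not spell out this uniformity; the compactness argument you supply is exactly the device used later in Step~3 of the proof of Theorem~\ref{theorem}, where the author does make $N$ uniform. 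So your write-up is, if anything, more complete than the paper's for this ``raw'' statement, while following the same route; the only thing to keep in mind is that the paper treats this proposition as a provisional stepping stone and postpones the uniformity bookkeeping to Theorem~\ref{theorem}.
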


\begin{proof}
Recall that $c\vdash_\F c'$ if and only if $\G\vdash_\F c'$ for all $\G\in\p_c$. The result is then a consequence of Lemma \ref{concatenate}.
\end{proof}

The general theorem \ref{theorem} will consist in a refined (but at the same time simplified) version of this raw result. Roughly speaking, it will be possible to replace the intersection over $\G\in\p_c$ with an intersection over a smaller set, to replace the union with a sum of vector subspaces and to choose $\eps$ uniformly in $(\G,s)$. This will simplify the right-hand side, and will lead in the end to a unique subspace of $H^1(M,\R)$ encoding all the information. In fact, $R(c)$ will be such a subspace. Moreover, exploiting some semicontinuity, the result will be proved to hold for $c'$ close enough to $c$. It will also be possible to replace the forcing relation $\vdash_\F$ with the mutual forcing relation $\dashv\vdash_\F$, and to have a locally uniform control on the $N$ appearing in its definition.

\medbreak
In order to motivate what follows, let us observe that the map $\G\mapsto R(\G)$ is by definition non-increasing along the action of elements of $\Sic$. More precisely,
\begin{equation}
\label{lyapunov}
R(\Phi(\G))\subseteq R(\G)\qquad\forall\ \G\in\p_c,\Phi\in\Sic.
\end{equation}
This can be interpreted by saying that this map is a sort of multi-valued Lyapunov function for the dynamics in $(\p_c,\Sic)$. Since we are interested in the set $R(c)$, which is the intersection of all the sets $R(\G)$, it is natural to look at the minimal components of the dynamics, whose properties have been analysed in Proposition \ref{minimal}.

For a minimal component $\m$ of $(\p_c,\Sic)$ let us define
\[
R(\m)=\bigcap_{\G\in\m}R(\G).
\]

\begin{proposition}[Equivalent expressions for $R(c)$]
\label{equivalent expressions}
\mbox{}
\begin{itemize}
\item[(i)] We have:
\begin{align*}
R(c)=\bigcap_{\m\text{ minimal}} R(\m).
\end{align*}
\item[(ii)] We have the following equivalent expressions for $R(\m)$:
\begin{align*}
R(\m)&=R(\G)=\sum_{\substack{A_1,\dots,A_n\in\sic \\ n\in\N}} R_{A_1,\dots,A_n}(\G)\qquad\text{ for any fixed }\G\in\m
\\
&=R_{A_1,\dots,A_n}(\G) \qquad\qquad\text{for some $A_1,\dots,A_n$ depending on $\G\in\m$}
\\
&=\sum_{\substack{\G\in\m \\ A\in\sic}}R_A(\G).
\end{align*} 
\end{itemize}
In particular, $R(\m)$ is a vector subspace for every $\m$, and the same holds for $R(c)$.
\end{proposition}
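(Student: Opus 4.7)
The strategy is to build everything on a simple monotonicity property for the spaces $R_s(\G)$ under enlargement of the string $s$ or under the action of $\Sic$, and then use the transitivity of minimal components (Proposition \ref{minimal}(iii)) to upgrade the defining union of $R(\G)$ to a sum of subspaces.

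First I would record the key monotonicity facts. For any string $s = (A_1, \dots, A_n)$ of elements of $\sic$ and any $A, B \in \sic$,
\begin{equation*}
R_{A, A_1, \dots, A_n}(\G) \supseteq R_{A_1, \dots, A_n}(\Phi_A(\G)) \quad\text{and}\quad R_{A_1, \dots, A_n, B}(\G) \supseteq R_{A_1, \dots, A_n}(\G).
\end{equation*}
Both follow by inspecting the definition of $R_s$ and using the inclusion $\I_{B \circ A}(\cdot) \subseteq \I_A(\cdot)$ from \eqref{compatibility}, which reverses upon taking the orthogonal and projecting to cohomology. The first inclusion above already implies the Lyapunov property \eqref{lyapunov}: $R(\Phi(\G)) \subseteq R(\G)$ for every $\Phi \in \Sic$. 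From this I would deduce constancy of $R(\cdot)$ on minimal components: if $\m$ is minimal and $\G, \G' \in \m$, Proposition \ref{minimal}(iii) provides $\Phi, \Phi' \in \Sic$ with $\Phi(\G) = \G'$ and $\Phi'(\G') = \G$, and applying \eqref{lyapunov} twice yields $R(\G) = R(\G')$. In particular $R(\m) = R(\G)$ for any fixed $\G \in \m$, which is the first identity of (ii).

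The heart of the argument, and the main obstacle, is to show that the defining union for $R(\G)$, $\G \in \m$, is actually a sum of subspaces. I would prove that the family $\{R_s(\G)\}_s$ is directed by concatenation: given two strings $s = (A_1, \dots, A_n)$ and $s' = (A'_1, \dots, A'_m)$, I would use Proposition \ref{minimal}(iii) applied to the pseudograph $\Phi_{A_n} \circ \cdots \circ \Phi_{A_1}(\G) \in \m$ to produce $B \in \sic$ with $\Phi_B \circ \Phi_{A_n} \circ \cdots \circ \Phi_{A_1}(\G) = \G$. The claim is that the concatenated string $s \ast B \ast s' := (A_1, \dots, A_n, B, A'_1, \dots, A'_m)$ satisfies
\begin{equation*}
R_{s \ast B \ast s'}(\G) \supseteq R_s(\G) + R_{s'}(\G).
\end{equation*}
Indeed, the first $n$ summands in the definition of $R_{s \ast B \ast s'}(\G)$ dominate those of $R_s(\G)$ by the monotonicity recorded above (longer compositions give smaller $\I$-sets, hence larger orthogonals), while the last $m$ summands coincide \emph{verbatim} with those of $R_{s'}(\G)$ because the intermediate composition $\Phi_B \circ \Phi_{A_n} \circ \cdots \circ \Phi_{A_1}$ brings us back to $\G$. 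This careful term-by-term bookkeeping is the delicate part. Hence the union is directed and coincides with the sum $\sum_s R_s(\G)$, giving the second identity of (ii). By finite dimensionality of $H^1(M,\R)$ the sum is realized by finitely many strings, and directedness then yields a single string $s$ with $R_s(\G) = R(\m)$, which is the third identity. Finally, each $R_s(\G)$ is by definition a finite sum of terms of the form $R_A(\G')$ with $\G' \in \m$ (using invariance of $\m$ under $\Sic$) and $A \in \sic$, and conversely every such $R_A(\G')$ is contained in $R(\G') = R(\m)$; this gives the last identity of (ii).

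Finally, for (i), the inclusion $R(c) \subseteq \bigcap_\m R(\m)$ is immediate since every $\G \in \m$ belongs to $\p_c$ and $R(\m) = R(\G)$. Conversely, given $\G \in \p_c$, Proposition \ref{minimal}(ii) provides a minimal component $\m$ contained in the orbit of $\G$, hence some $\Phi \in \Sic$ with $\Phi(\G) \in \m$. By \eqref{lyapunov} and the first identity of (ii), $R(\G) \supseteq R(\Phi(\G)) = R(\m) \supseteq \bigcap_{\m'} R(\m')$. Intersecting over $\G \in \p_c$ gives the reverse inclusion, completing the proof.
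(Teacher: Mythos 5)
Your proof is correct and follows essentially the same route as the paper's: constancy of $R(\cdot)$ on minimal components via the Lyapunov property and transitivity, directedness of $\{R_s(\G)\}_s$ through the concatenated string $s \ast \bar A \ast s'$ with $\bar A$ returning to $\G$, and finite dimensionality to realize $R(\m)$ by a single string. The paper states the concatenation inclusion (its display \eqref{eq:concat}) somewhat tersely and leaves the last identity to "similar arguments"; you supply the term-by-term bookkeeping for both, but there is no substantive difference in method.
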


\begin{proof}
Let us prove item $(i)$. By the definition of $R(c)$ and $R(\m)$, it is clear that $R(c)\subseteq R(\m)$ for every minimal component $\m$, hence $R(c)\subseteq \cap_\m R(\m)$. For the reverse inclusion, let us notice that, since $\F$ is equi-semiconcave, by Proposition \ref{minimal}(ii) for every $\G\in\p_c$ there exists $\Phi\in\Sic$ such that $\Phi(\G)$ belongs to a minimal component. By \eqref{lyapunov},
\[
R(\G)\supseteq R(\Phi(\G))\supseteq \bigcap_\m R(\m).
\]
By taking the intersection over all $\G\in\p_c$, one gets the desired inclusion.

Let us now prove item $(ii)$. Thanks to relation \eqref{lyapunov} and the transitivity of minimal components (Proposition \ref{minimal}(iii)), we gets that the function $\G\mapsto R(\G)$ is constant on every minimal component. This proves that $R(\m)=R(\G)$ for any $\G\in\m$. Moreover, for every two strings $(A_1,\dots,A_n)$ and $(A'_1,\dots,A'_{n'})$ it holds
\begin{equation}
\label{eq:concat}
R_{A_1,\dots,A_n}(\G)+R_{A'_1,\dots,A'_{n'}}(\G)\subseteq R_{A_1,\dots,A_n,\bar A,A'_1,\dots,A'_{n'}}(\G)\subseteq R(\G)
\end{equation}
where $\bar A$ is any cost in $\sic$ such that $\Phi_{\bar A}\circ\Phi_{A_n}\circ\dots\circ\Phi_{A_1}(\G)=\G$. The existence of such cost $\bar A$ is guaranteed once again by the transitivity of the minimal component. This proves that
\[
R(\G)\supseteq \sum_{\substack{A_1,\dots,A_n\in\sic \\ n\in\N}} R_{A_1,\dots,A_n}(\G)\qquad\forall\,\G\in\m
\]
and the opposite inclusion is easy from the definitions. Moreover, since the dimension of $H^1(M,\R)$ is finite, we can write $R(\G)$ as a finite sum
\[
R(\G)=R_{A_1^1,\dots,A_{n_1}^1}(\G)+ R_{A_2^2,\dots,A_{n_2}^2}(\G)+\dots+ R_{A_1^N,\dots,A_{n_N}^N}(\G)
\]
and arguing as in \eqref{eq:concat} we get
\[
R(\G)=R_{A_1,\dots,A_n}(\G) \qquad\text{for some}\quad A_1,\dots,A_n\in\sic, n\in\N.
\]
The equality $R(\m)=\sum_{\G\in\m}\sum_{A\in\sic}R_A(\G)$ follows by similar arguments.
\end{proof}

Let us mention that, starting from the last expression for $R(\m)$ above, one can show that considering just the weaker version of Lemma \ref{concatenate} would eventually lead to the same results.

\begin{remark}
The function $\F\mapsto R_\F(c)$ is increasing. This is natural in view of the interpretation of $R_\F(c)$ as a set of allowed directions for diffusion, and follows by an inspection of the definitions (in fact, the map $\F\mapsto \Sic(\F)$ is also increasing).  In particular, let us point out that, since $R_\F(c)$ is a vector subspace,
\[
R_\F(c)\supseteq \sum_{H\in\F} R_{\{H\}}(c).
\]
The inclusion may be strict though: we will see that this is the case for two twist maps with non-common non-contractible invariant circles of cohomology $c$.
\end{remark}

We can now restate and prove Theorem \ref{theorem intro} of the Introduction. It is a generalization of Theorem 0.11 in \cite{Ber08} to the polysystem case.

\begin{theorem}
\label{theorem}
Let $\F$ be a family of one-periodic Tonelli Hamiltonians defined on the cotangent space of a boundaryless compact manifold $M$. Assume that $\F$ is equi-semiconcave in the sense of Definition \ref{equi-semiconcave}. Let $c\in H^1(M,\R)$. Then there exist a neighborhood $W$ of $c$ in $H^1(M,\R)$, $\eps>0$ and $N\in\N$ such that
\[
c'\ \dashv\vdash_{N,\F}\  c'+B_\eps R(c)\qquad\forall\,c'\in W.
\]
\end{theorem}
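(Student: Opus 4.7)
The strategy is to use Proposition~\ref{equivalent expressions} to replace the a priori unwieldy definition of $R(c)$ by a finite collection of ``base'' pseudographs and strings, apply the stronger form of Lemma~\ref{concatenate} at each of them, and then promote the resulting local statements to a uniform one on a neighborhood of $c$ by a compactness/dynamics argument. The symmetric relation $\dashv\vdash$ will follow from the one-way relation by a routine observation.

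Concretely, Proposition~\ref{equivalent expressions}(i) gives $R(c)=\bigcap_{\m}R(\m)$, the intersection running over the minimal components of the dynamical system $(\p_c,\Sic)$. Since $H^1(M,\R)$ is finite-dimensional, a descending-chain argument selects finitely many minimal components $\m_1,\dots,\m_k$ with $R(c)=\bigcap_{i=1}^{k}R(\m_i)$. For each $i$, Proposition~\ref{equivalent expressions}(ii) produces a base pseudograph $\G_i\in\m_i$ and a finite string $s_i=(A_1^i,\dots,A_{n_i}^i)$ of elements of $\sic$ such that $R(\m_i)=R_{s_i}(\G_i)\supseteq R(c)$. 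Applying the stronger form of Lemma~\ref{concatenate} to each pair $(\G_i,s_i)$ yields a neighborhood $\W_i\subseteq\p$ of $\G_i$ (crucially, a neighborhood in the full space $\p$, hence including pseudographs of nearby cohomology), a radius $\eps_i>0$, and an integer $N_i$ such that every $\G'\in\W_i$ forces, in at most $N_i$ steps, every cohomology in $c(\G')+B_{\eps_i}R(c)$. Setting $\W:=\bigcup_i\W_i$, $\eps_0:=\min_i\eps_i$, and $N_\ast:=\max_iN_i$ packages these into a uniform local forcing on $\W$.

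The main obstacle is that $\p_{c'}$ is not compact, so $\W$ does not automatically cover every pseudograph of cohomology near $c$. To bridge the gap I would exploit the dynamics: every orbit in $(\p_c,\Sic)$ contains a minimal component (Proposition~\ref{minimal}), and by Proposition~\ref{Sigma properties}(iii) the action of any operator of $\Sic$ can be shadowed by a finite-time operator of $\Sigma$; together with the compactness of $\sic$ modulo constants (Proposition~\ref{prop:compact}) and the continuity estimate~\eqref{continuity estimate}, a careful cover/pigeonhole argument should produce a neighborhood $W$ of $c$ and an integer $N_{\mathrm{push}}$ such that every $\G\in\p_{c'}$ with $c'\in W$ is forced into $\W$ in at most $N_{\mathrm{push}}$ steps. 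This is the technical crux of the proof: one must handle the fact that the minimal components of $(\p_{c'},\Sigma^\infty_{c'})$ are a priori distinct from those of $(\p_c,\Sic)$ and rely on continuity of all the objects involved to relate them. With this, setting $N:=N_\ast+N_{\mathrm{push}}$ yields the one-way forcing $c'\vdash_{N,\F}c'+B_{\eps_0}R(c)$ for all $c'\in W$.

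Finally, the mutual forcing follows by symmetry: after shrinking $W$ so that $c'+B_{\eps_0}R(c)\subseteq W$ for every $c'\in W$, the one-way forcing applied to $c''=c'+v$ with $v\in B_{\eps_0}R(c)$ produces $c''\vdash_{N,\F}c''+(-v)=c'$, which together with $c'\vdash_{N,\F}c''$ gives the claimed equivalence $c'\dashv\vdash_{N,\F}c'+B_{\eps_0}R(c)$.
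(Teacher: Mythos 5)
Your overall strategy is the one the paper uses — Proposition~\ref{equivalent expressions} to reduce $R(\m)$ to a single string, Lemma~\ref{concatenate} to get local forcing, Propositions~\ref{minimal} and \ref{Sigma properties}(iii) to push pseudographs into the good neighborhoods, and compactness to obtain uniform $N,\eps,W$ — and the final passage from one-way to mutual forcing is correct. However, the step you yourself flag as ``the technical crux'' is exactly the one left as a sketch, and as stated it does not quite go through.

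Two issues. First, the reduction to finitely many minimal components $\m_1,\dots,\m_k$ with $R(c)=\bigcap_i R(\m_i)$, while correct, is a false start: when you later push an arbitrary $\G\in\p_c$ along the dynamics, its orbit will close up on \emph{some} minimal component $\m$, but there is no reason for $\m$ to be among $\m_1,\dots,\m_k$, so $\Phi(\G)$ need not land in $\W=\bigcup_i\W_i$. What one actually needs (and what the paper does in its Step~1) is the local forcing estimate at \emph{every} $\G$ in \emph{every} minimal component $\m$, using only the inclusion $R(\m)\supseteq R(c)$; the restriction to finitely many $\m_i$ buys nothing. Second, the ``push'' argument is not carried out, and compactness of $\sic$ modulo constants does not by itself give a uniform $N_{\mathrm{push}}$ because $\p_c$ is not compact. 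The paper's device is to first extend the local estimate to all $\G\in\p_c$ (its Step~2: push $\G$ to a minimal component via Proposition~\ref{minimal}, shadow the corresponding operator by some $\Phi_A$ with $A\in\sigma$ via Proposition~\ref{Sigma properties}(iii), and use continuity of $\Phi_A$ to get a neighborhood $\W_\G$), and only then apply a \emph{single fixed} operator $\Phi_{A_0}$, $A_0\in\sigma$, whose image $\overline{\Phi_{A_0}(\p_c)}$ \emph{is} compact by equi-semiconcavity. A finite subcover of this compact set by the $\W_{\G_j}$, together with the observation that the same finitely many $\W_{\G_j}$ still cover $\Phi_{A_0}(\p_{W'})$ for $W'$ small (a continuity argument on the cohomology map), yields the uniform $N$ and $\eps$. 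Your sketch names the right propositions but omits the $\Phi_{A_0}$-compactification, which is precisely what makes the finite subcover possible and what makes the integer $N$ uniform.
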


\begin{proof}
We subdivide the proof into four steps.

\medbreak

\emph{Step 1. For every $\m\subset\p_c$ minimal and every $\G\in\m$ there exist a neighborhood $\W_\G$ of $\G$ in $\p$, a natural number $N_\G$ and $\eps_\G>0$ such that
\[
\G'\ \ \vdash_{N_\G,\F}\ \  c(\G')+B_{\eps_\G} R(c) \qquad\qquad\forall\, \G'\in\mathbb W_\G.
\]
}
Let $\m$ be minimal and $\G\in\m$. Let $A_1,\dots,A_n\in\sic$ such that $R_{A_1,\dots,A_n}(\G)=R(\m)$. This is possible thanks to Proposition \ref{equivalent expressions}. Let us then apply Lemma \ref{concatenate} to $\G$ and to the composition $\Phi_{A_n}\circ\dots\circ\Phi_{A_1}$. Call $\W_\G$, $N_\G$ and $\eps_\G$ the objects yielded by that Proposition. We have
\begin{equation*}
\G'\ \ \vdash_{N_\G,\F}\ \  c(\G')+B_{\eps_\G} R(\m) \qquad\qquad\forall\, \G'\in\mathbb W_\G.
\end{equation*}
In particular, since $R(c)\subseteq R(\m)$, we have
\begin{equation*}
\G'\ \ \vdash_{N_\G,\F}\ \  c(\G')+B_{\eps_\G} R(c) \qquad\qquad\forall\, \G'\in\mathbb W_\G
\end{equation*}
as desired.

\medbreak
\emph{Step 2. For every $\G\in\p_c$ there exist a neighborhood $\W_\G$ of $\G$ in $\p$, a natural number $N_\G$ and $\eps_\G>0$ such that
\[
\G'\ \ \vdash_{N_\G,\F}\ \  c(\G')+B_{\eps_\G} R(c) \qquad\qquad\forall\, \G'\in\mathbb W_\G.
\]
}
Let $\G\in\p_c$. By Proposition \ref{minimal}, there exists $\Phi\in\Sic$ such that $\Phi(\G)$ is in a minimal component. Moreover, by Proposition \ref{Sigma properties} (iv) there exists $A\in\sigma$ such that $\Phi_A(\G)\in\W_{\Phi(\G)}$. By continuity, $\Phi_A(\G')\in\W_{\Phi(\G)}$ if $\G'$ is in a small enough neighborhood $\W_\G$ of $\G$. By Proposition \ref{sigma properties} and by Step 1, there exists $N_A\in\N$ such that
\[
\G'\ \ \vdash_{N_A,\F}\ \ \Phi_A(\G')\ \ \vdash_{N_{\Phi(\G)},\F}\ \  c(\G')+B_{\eps_{\Phi(\G)}} R(c)\qquad\qquad \forall\, \G'\in\mathbb W_\G.
\]
Thus we can take $N_\G=N_A+N_{\Phi(\G)}$ and $\eps_\G=\eps_{\Phi(\G)}$.

\medbreak
\emph{Step 3. There exist a neighborhood $W'$ of $c$ in $H^1(M,\R)$, a natural number $N$ and $\eps'>0$ such that 
\[
c'\ \ \vdash_{N,\F}\ \  c'+B_{\eps'} R(c)\qquad\forall\,c'\in W'.
\]
}
Let us choose $A_0$ in $\sigma$ (no matter which one, for instance $A_0=A^1_L$ with $L\in\F$ will work). The closure of $\Phi_{A_0}(\p_c)$ is compact, thus we can extract a finite subfamily $\{\G_j\}_j\subseteq\overline{\Phi_{A_0}(\p_c)}$ such that $\W=\cup_j \W_{\G_j}$ covers $\overline{\Phi_{A_0}(\p_c)}$. Moreover, it is true that $\W$ also covers $\Phi_{A_0}(\p_{W'})$ for a sufficiently small neighborhood $W'$ of $c$. Indeed, consider an arbitrary neighborhood $W''$ of $c$. The function $\G\mapsto c(\G)$ is continuous on the compact set $\overline{\Phi_{A_0}(\p_{W''})}\setminus \W$, hence its image is compact too. Since $c$ does not belong to this image, we can take as $W'$ the intersection of $W''$ with the complementary of the image.

In other words, for any $\G'\in\p_{W'}$ there exists $\bar j$ such that $\Phi_{A_0}(\G')\in\W_{\G_{\bar j}}$. Hence we obtain
\[
\G'\ \ \vdash_{N_{A_0},\F}\ \ \Phi_{A_0}(\G')\ \ \vdash_{\max_j N_{\G_j},\F}\ \ \ c(\G')+B_{\min_j \eps_{\G_j}} R(c)\qquad\forall\,\G'\in \p_{W'}.
\]
Thus we can take $N=N_{A_0}+\max_j N_{\G_j}$ and $\eps'=\min_j \eps_{\G_j}$, and the Step 3 is proved.

\medbreak
\emph{Step 4. There exist a neighborhood $W$ of $c$ in $H^1(M,\R)$, a natural number $N$ and $\eps>0$ such that
\[
c'\ \dashv\vdash_{N,\F}\  c'+B_\eps R(c)\qquad\forall\,c'\in W.
\]
}
In order to obtain the mutual forcing relation starting from the one-side forcing relation of Step 3, it suffices to take $W\subseteq W'$ and $\eps\le\eps' $ small enough in such a way that $W+B_\eps R(c)\subset W'$. This makes possible to apply the one-side forcing in the opposite direction. This concludes the proof of Step 4 (we keep the same $N$ as in the Step 3) and of the Theorem.
\end{proof}

\begin{remark}
A careful analysis of the proof of the theorem shows that the multi-valued function $c\mapsto R(c)$ is lower-semicontinuous: for any $c$ there exists a neighborhood $Z$ such that $R(c)\subseteq R(c')$ for every $c'\in Z$. Nevertheless, the statement of the theorem is somehow stronger, because it yields semicontinuity also on $N$ and $\eps$.
\end{remark}

In the remainder of this section we draw some relations between the subspace $R(c)$ and the underlying Hamiltonian polysystem dynamics.

\begin{proposition}
\label{R(c)=0}
Assume $\F$ equi-semiconcave. If there exists a $C^{1,1}$ $c$-weak Kam solution which is common to all $H\in\F$, then $R(c)=\{0\}$. If $d=1$ the viceversa holds: if $R(c)=\{0\}$ then all the Hamiltonians in $\F$ have an invariant circle in common. 
\end{proposition}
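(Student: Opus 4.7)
The plan is to translate both implications into statements about the invariance of the pseudograph $\G := \G_{c,u}$ under operators in $\Sic$ and their duals, using the dictionary between weak Kam solutions, Peierls barriers, and fixed points of $\Phi_A$ built up in Subsection \ref{wkt}.

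For the forward direction, I would let $u$ be a $C^{1,1}$ common $c$-weak Kam solution and set $\G = \G_{c,u}$. Being $C^{1,1}$, $u$ is both semiconcave and semiconvex, so $\G \in \p_c \cap \breve\p_c$. The common weak Kam property means $\Phi_{A_{L,c}}(\G) = \G$ for every $L \in \F$, and I would then invoke the closure observation used in the proof of Proposition \ref{tfae wks} $(ii)\Rightarrow(iii)$: the class of costs $A$ with $\Phi_A(\G) = \G$ is closed under addition of constants, minima, compositions and uniform limits, so it contains all of $\sic$. Dually, since $\G \in \breve\p_c$, Proposition \ref{wks&dual}(ii) forces $\G$ to be a dual $c$-weak Kam solution for every $L$, and the analogous closure argument (using $\breve\Phi_{A'\circ A} = \breve\Phi_A\circ\breve\Phi_{A'}$ and an elementary check of the other operations) yields $\breve\Phi_A(\G) = \G$ for every $A\in\sic$. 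Therefore $\breve\Phi_A\Phi_A(\G) = \G$, which by \eqref{obstruction} reads $\I_A(\G) = M$, so $R_A(\G) = [M^\bot] = \{0\}$. Since every composition $\Phi_{A_n}\circ\dots\circ\Phi_{A_1}$ still fixes $\G$, the same vanishing applies to each $R_{A_1,\dots,A_n}(\G)$, giving $R(\G) = \{0\}$ and hence $R(c)\subseteq R(\G) = \{0\}$.

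For the converse in $d=1$, I would exploit that $H^1(M,\R)\cong\R$, so every vector subspace is either $\{0\}$ or $\R$. Combining $R(c) = \bigcap_\m R(\m)$ (Proposition \ref{equivalent expressions}(i)) with $R(c) = \{0\}$ and the existence of minimal components guaranteed by the equi-semiconcavity assumption (Proposition \ref{minimal}(ii)), there must exist a minimal $\m$ with $R(\m) = \{0\}$. I would pick $\G\in\m$ and note that for every $L\in\F$ the Peierls barrier $h_{L,c}$ belongs to $\sic$, so the expression $R(\m) = \sum_{\G\in\m,\,A\in\sic} R_A(\G)$ in Proposition \ref{equivalent expressions}(ii) forces $R_{h_{L,c}}(\G) = \{0\}$; in $d=1$ this means $\I_{h_{L,c}}(\G) = \T$ (the only closed subset whose orthogonal cohomology vanishes). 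By \eqref{obstruction} one then has $\breve\Phi_{h_{L,c}}\Phi_{h_{L,c}}(\G) = \G$, so $\G$ lies in the image of $\breve\Phi_{h_{L,c}}$; the dual of Proposition \ref{tfae wks}(iv) identifies $\G$ as a dual $c$-weak Kam solution for $L$, and simultaneously places $\G$ in $\breve\p$. Hence $\G\in\p\cap\breve\p$, and Proposition \ref{wks&dual}(ii)-(iii) then upgrade $\G$ to a Lipschitz ($C^{1,1}$) graph invariant for both $\phi_H$ and $\phi_H^{-1}$, i.e.\ a non-contractible invariant circle. Since $L\in\F$ was arbitrary, the same circle $\G$ is invariant for every Hamiltonian in $\F$.

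The sensitive step is the dimension-dependent implication $[S^\bot] = \{0\}\Rightarrow S = M$, which holds only in $d=1$: this is what allows the qualitative information $R(\m) = \{0\}$ to be promoted to the extremal condition $\I_{h_{L,c}}(\G) = M$, and ultimately to full two-sided invariance of $\G$. In higher dimension the implication fails, which is precisely why the converse cannot be expected to hold in general, and why the obstructions encoded in the sets $\I_A(\G)$ provide only sufficient (not necessary) conditions for diffusion when $d>1$.
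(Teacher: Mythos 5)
Your proposal is correct and follows essentially the same path as the paper's own proof: in the forward direction both identify the common $C^{1,1}$ solution with a pseudograph $\G\in\p_c\cap\breve\p_c$ fixed by $\Phi_A$ and $\breve\Phi_A$ for all $A\in\sic$ (you fill in the dual closure argument via Proposition~\ref{wks&dual}(ii), which the paper leaves implicit), and then read off $\I_A(\G)=M$ from \eqref{obstruction}; in the converse direction both locate a minimal $\m$ with $R(\m)=\{0\}$, apply the vanishing to the Peierls barriers $h_{L,c}$, deduce $\G=\breve\Phi_{h_{L,c}}\Phi_{h_{L,c}}(\G)$, and conclude via Proposition~\ref{wks&dual} that $\G$ is a common $C^{1,1}$ invariant circle. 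The only cosmetic difference is that the paper phrases the vanishing of $R(\m)$ via the sum $\sum_{\G\in\m,A\in\sic}[\I_A(\G)^\bot]$ for all $\G\in\m$ simultaneously, while you specialize to a single $\G$; both are valid and interchangeable.
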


\begin{proof}
If there exists such a weak Kam solution as in the statement, we can identify it with a pseudograph $\G\in\p_c\cap\breve\p_c$ such that $\Phi_{A_L}(\G)=\G$ for every Lagrangian $L\in\F$. Since $\Sic$ is generated by such operators, we get that every $\Phi\in\Sigma^\infty_c$ satisfies $\Phi(\G)=\breve\Phi(\G)=\G$. The singleton $\{\G\}$ is thus a minimal set for $\p_c$ and, in view of formula \eqref{obstruction}, it satisfies $R(\{\G\})=\{0\}$.

On the other hand, if $d=1$ and $R(c)=\{0\}$, then there exists a minimal set $\m$ such that
\[
\sum_{\G\in\m,A\in\sigma_c^\infty} [\I_A(\G)^\bot]=\{0\},
\]
which means, thanks to the fact $d=1$ and to \eqref{obstruction}, that $\G=\breve\Phi_A\Phi_A(\G)$ for every $\G\in\m$ and $A\in\sigma_c^\infty$. Let us apply this to the Peierls barrier $h_{L,c}$ of a Lagrangian $L\in\F$. We get
\[
\G=\breve\Phi_{h_{L,c}}\Phi_{h_{L,c}}(\G)\in\, \text{Im}(\breve\Phi_{h_{L,c}})\qquad\forall\, L\in\F
\]
hence $\G$ is a dual weak Kam solution for every $L\in\F$, which in addition belongs to $\p$. This implies the result, by Proposition \ref{wks&dual}.
\end{proof}

We now can restate and prove the Corollary \ref{d=1 intro} about families of exact twist maps. The condition of equi-semiconcavity on $\F$ is dropped.

\begin{corollary} 
\label{d=1}
Let $M=\T=\R / \Z$. Let $\F$ be an arbitrary family of one-periodic Tonelli Hamiltonians on $T^*M\cong\T\times\R$. Let us make the identification $H^1(\T,\R)\cong\R$. If, for some $A<B\in H^1(\T,\R)$, the family $\F$ does not admit an invariant common circle with cohomology in $[A,B]$, then:
\begin{itemize}
\item[(i)] there exists an $\F$-polyorbit $(x_n,p_n)_{n\in\Z}$ satisfying $p_0 =A$ and $p_N= B$ for some $N\in\N$;
\item[(ii)] for every $H,H'\in\F$ and every $c,c'\in[A,B]$ there exists an $\F$-polyorbit $\alpha$-asymptotic to the Aubry set $\Aa_{H}(c)$ and $\omega$-asymptotic to $\Aa_{H'}(c')$
\item[(iii)] for every sequence $(c_i,H_i,\eps_i)_{i\in\Z}\subset [A,B]\times\F\times\R^+$ there exists an $\F$-polyorbit which visits in turn the $\eps_i$-neighborhoods of the Mather sets $\Mm_{H_i}(c_i)$.
\end{itemize}
\end{corollary}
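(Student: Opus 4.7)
The strategy is to reduce the problem to the equi-semiconcave setting by extracting, for each $c\in[A,B]$, a finite subfamily $\F_c\subset\F$ having no common invariant circle of cohomology $c$. Theorem~\ref{theorem} and Proposition~\ref{R(c)=0} applied to $\F_c$ will then furnish a neighborhood of $c$ on which $\dashv\vdash_{\F_c}$ acts on every pair of nearby cohomologies; covering the compact interval $[A,B]$ by finitely many such neighborhoods will chain everything together.

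The heart of the proof is the extraction of $\F_c$. Fix $c\in[A,B]$. If some $H_0\in\F$ has no invariant circle of cohomology $c$, take $\F_c=\{H_0\}$. Otherwise, pick any $H_0\in\F$ and, for each $H\in\F$, denote by $K_H(c)\subset C^0(\T,\R)$ the set of continuous functions $f$ with $\int_\T f\,dx=c$ whose graph is an invariant circle of $\phi_H$. By Birkhoff's theorem such graphs are uniformly Lipschitz for fixed $H$; hence each $K_H(c)$ is closed in $C^0(\T,\R)$ (uniform limits of equi-Lipschitz $\phi_H$-invariant graphs remain $\phi_H$-invariant Lipschitz graphs, by continuity of $\phi_H$), and $K_{H_0}(c)$ is moreover compact. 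Since by hypothesis $\bigcap_{H\in\F}K_H(c)=\emptyset$, the finite intersection property in the compact $K_{H_0}(c)$ yields a finite $\F_c\supseteq\{H_0\}$ with $\bigcap_{H\in\F_c}K_H(c)=\emptyset$. This compactness step is the main technical obstacle.

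The chosen $\F_c$ is finite and hence equi-semiconcave. By the converse direction of Proposition~\ref{R(c)=0} (valid precisely for $d=1$) we obtain $R_{\F_c}(c)\neq\{0\}$, and since $H^1(\T,\R)\cong\R$ has no proper nonzero subspace, in fact $R_{\F_c}(c)=\R$. Theorem~\ref{theorem} then provides a neighborhood $W_c$ of $c$ and $\eps_c>0$ such that $c'\dashv\vdash_{\F_c}c''$ whenever $c',c''\in W_c$ and $|c'-c''|<\eps_c$. Cover $[A,B]$ by finitely many such $W_{c_1},\dots,W_{c_k}$ and set $\F_0=\bigcup_i\F_{c_i}$, which is still a finite subfamily of $\F$. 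Transitivity of $\dashv\vdash_{\F_0}$ along the cover then yields $c\dashv\vdash_{\F_0}c'$, and \emph{a fortiori} $c\vdash_\F c'$, for all $c,c'\in[A,B]$.

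Each of (i)-(iii) now follows from Proposition~\ref{diffusion} applied to the original family $\F$: (i) from part~1(ii) with the smooth closed one-forms $A\,dx$ and $B\,dx$, which yields a finite $\F$-polyorbit joining $\mathrm{Graph}(A\,dx)=\T\times\{A\}$ to $\mathrm{Graph}(B\,dx)=\T\times\{B\}$; (ii) from part~1(i), recalling that $\Aa_H(c)$ and $\Aa_{H'}(c')$ depend only on the single Hamiltonians $H,H'$; (iii) from part~2, using the established relations $c_i\vdash_\F c_{i+1}$ for every $i$.
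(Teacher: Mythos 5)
Your proof is correct, and it reaches the same two pillars as the paper (the converse direction of Proposition \ref{R(c)=0} plus Theorem \ref{theorem}, then Proposition \ref{diffusion}), but you handle the reduction from arbitrary $\F$ to a finite subfamily in a genuinely different way. The paper extracts a single finite subfamily working uniformly over the whole interval $[A,B]$: it considers, for each finite $\F'\subseteq\F$, the set $C(\F')$ of pseudographs in $\p_{[A,B]}$ that are $C^{1,1}$ weak KAM solutions for every $H\in\F'\cup\{H_0\}$, and obtains compactness by observing that each $C(\F')$ is a closed subset of the relatively compact image $\Phi_{A_{H_0}}(\p_{[A,B]})$ (Remark \ref{remark laxoleinik}(iii)), so the finite-intersection property applies directly. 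You instead work one cohomology class $c$ at a time, getting compactness of the set $K_{H_0}(c)$ of $\phi_{H_0}$-invariant circles of cohomology $c$ via Birkhoff's graph theorem and Arzel\`a--Ascoli, then produce a finite $\F_c$ and a local neighborhood from Theorem \ref{theorem}, and finish by a covering-and-chaining argument over $[A,B]$. The two routes buy different things: the paper's reduction is uniform in $c$ and stays entirely inside its own Lax--Oleinik/pseudograph framework (at the cost of being less transparent); yours replaces that internal compactness by the classical Birkhoff theorem, which is more elementary and geometrically explicit, at the cost of an extra covering step --- but Theorem \ref{theorem} is local in any case, so that covering step is implicit in the paper's ``hence $[A,B]$ is contained in the same equivalence class'' as well. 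Both arguments are sound; yours is a legitimate alternative.
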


\begin{proof}
If $\F$ is finite, the conclusion is immediate: by Proposition \ref{R(c)=0}, $R(c)=\R$ for every $c\in[A,B]$, hence by Theorem \ref{theorem} $[A,B]$ is contained in the same equivalence class for $\dashv\vdash_\F$. Therefore Proposition \ref{diffusion} applies, and allows to prove the results: for instance, in order to prove item $(i)$ one applies Proposition \ref{diffusion} (ii) with $\eta\equiv A$ and $\eta'\equiv B$.

If $\F$ is arbitrary, we just reduce to the case of $\F$ finite thanks to the following fact: if the family $\F$ does not admit invariant common circles with cohomology in $[A,B]$, then we can extract a finite subfamily $\F'\subset\F$ with the same property. Indeed, suppose on the contrary that every finite subfamily $\F'$ admits an invariant common circle with cohomology in $[A,B]$, and let us arbitrarily pick $H_0$ in $\F$: then the set $C(\F')$ defined by
\begin{align*}
C(\F')&=\left\{ \G\in\p_{[A,B]}: \G \text{ is a $C^{1,1}$ weak Kam solution for all } H\in\F'\cup\{H_0\}\right\}
\\
& =\bigcap_{H\in\F'\cup\{H_0\}}\Bigl(\{\G:\Phi_{A_H}(\G)=\G\}\cap\{\G:\breve\Phi_{A_H}(\G)=\G\}\Bigr)\ \cap\ \p_{[A,B]}
\end{align*}
is non-empty for all finite $\F'\subseteq\F$. Here $A_H$ denotes the time-one action of the Lagrangian associated to $H$. The second line in the above expression tells us that $C(\F')$ is closed and contained in the set $\Phi_{A_{H_0}}(\p_{[A,B]})$, which is relatively compact by Remark \ref{remark laxoleinik}$(iii)$. Hence $C(\F')$ is compact and not empty for every finite subfamily $\F'$.  We deduce that the sets $C(\F')$ satisfy the finite intersection property, because
\[
C(\F'_1)\cap\dots\cap C(\F'_n)= C(\F'_1\cup\dots\cup\F'_n)\neq\emptyset.
\]
By compactness, the whole intersection is non-empty too:
\[
\bigcap_{\substack{\F'\subseteq\F  \\  \F' \mathrm{ finite}}} C(\F')\neq\emptyset.
\]
Its elements are the invariant circles common to all the Hamiltonians of the family $\F$. This contradicts the assumptions.
\end{proof}

We end by discussing the application of Theorem \ref{theorem} to some special cases.

\begin{itemize}

\item[-] \emph{Case $\F=\{L\}$.} This is the case extensively treated in \cite{Ber08}. In that paper, $R(c)$ was defined as
\begin{equation}
\label{R(c) bernard}
R(c)=\bigcap_{\G\text{ $c$-weak Kam solution}} [\I_{{h_c}}(\G)^\bot],
\end{equation}
where $h_c$ is the $c$-Peierls barrier of $L$. Let us check that this definition coincides with the one given here. From Section \ref{Sic} we know that the minimal components in $\p_c$ are exactly the $c$-weak Kam solutions for $L$, and that ${h_c}\circ\sic=\sic\circ{h_c}={h_c}$. Therefore,
\[
\I_A(\G)\supseteq\I_{{h_c}\circ A}(\G)=\I_{{h_c}}(\G),\qquad\forall\ \G\in\p_c,A\in\sic
\]
(the first inclusion follows from \ref{compatibility}). The equality of \eqref{R(c) bernard} with our definition of $R(c)$ is then easy to verify.

Note that in this case the obstruction to the diffusion via the Mather mechanism is the homological size of $\I_{{h_c}}(\G)$, for every $c$-weak Kam solution $\G$. This set is also called the projected Aubry set of $\G$ (see Section \ref{wkt}), and taking the union over the $c$-weak Kam solutions $\G$ one gets the projection on $M$ of the Mañé set $\Nn(c)\subset T^*M$. A relation between $R(c)$ and the homology (in $T^*M$) of $\Nn(c)$ is given in \cite[Lemma 8.2]{Ber08}.

\item[-] \emph{Case $d=1$.} In this case the mutual forcing relation $\dashv\vdash_\F$ is well understood thanks to Proposition \ref{R(c)=0} and Theorem \ref{theorem}: if $B\subseteq H^1(\T,\R)$ is the closed set of those cohomology classes $c$ for which there exists a common invariant circle of cohomology $c$, then the equivalence classes for $\dashv\vdash_\F$ are the elements of $B$ and the connected components of its complementary.

\item[-] \emph{Commuting hamiltonians.} By the discussion in Section \ref{Sic}, we know that for every $c$ there exists a cost ${h_c}\in\sic$ such that ${h_c}\circ\sic=\sic\circ{h_c}={h_c}$. We also know that $h_c$ is the common Peierls barrier of all the Hamiltonians in $\F$, and that the minimal components in $\p_c$ are exactly the $c$-weak Kam solutions for one (hence all) Hamiltonian in $\F$. Arguing as in the case of a single Hamiltonian, one gets
\[
R_\F(c)=\bigcap_{\G\text{ $c$-weak Kam solution}} [\I_{{h_c}}(\G)^\bot],
\]
hence $R_\F(c)=R_{\{H\}}(c)$ for every $H\in\F$. Thus the obstructions are the same than those of each single Hamiltonian in $\F$. Therefore, the polysystem does not present any new kind of instability phenomena with respect to each system regarded separately (at least using the Mather mechanism presented here).

\item[-] \emph{General case. }The general situation is more complicated. Nevertheless, some information can still be extracted. For instance, let us suppose that $V$ is a one-dimensional subspace of $H^1(M,\R)$ not contained in $R(c)$. Then, there must exists a minimal component $\m$ such that $V$ is not contained in $R(\m)$. In particular, by Proposition \ref{equivalent expressions} and by invariance of $\m$, we have that
\begin{equation}
\label{V obstruction}
V\nsubseteq R_{A_1,\dots,A_n}(\G)\qquad\forall\,\G\in\m,\ \forall\,A_1,\dots,A_n\in\sic.
\end{equation}
By making different choices of $\G$ and $A_1,\dots,A_n$, one in principle gets a plethora of conditions on the dynamics of the family $\F$. In the next proposition we prove two sample statements, obtained by considering the two operators studied in Propositions \ref{switched} and \ref{switched peierls}.

Let us remark that the condition \ref{V obstruction} above essentially boil down to a condition on the various sets $\I_A(\G)=\G\wedge\breve\Phi_A\Phi_A(\G)$. By property (iv) in Proposition \ref{switched} we see that, at least for some choices of $\G$ and $A$, we can interpret $\G$ as an unstable manifold of some switched flow and $\breve\Phi_A\Phi_A(\G)$ as a stable manifold of another switched flow. Thus, at least for these choices of $\G$ and $A$, the size of the obstruction $\I_A(\G)=\G\wedge\breve\Phi_A\Phi_A(\G)$ has an interpretation as the size of the intersection between some unstable and stable manifolds.
\end{itemize}

\begin{proposition}
\label{sample}
Let $c\in H^1(M,\R)$. Suppose that $V$ is a one-dimensional subspace of $H^1(M,\R)$ not contained in $R(c)$.
\begin{itemize}
\item[(i)] For every arbitrary finite string $H_1,\dots,H_k$ of Hamiltonians in $\F$, there exists a subset $S\subset T^*M$ such that: $S$ is a Lipschitz graph over its projection on $M$, it is contained in a pseudograph of cohomology $c$, it is invariant (both in past and in future) for the switched flow
\[
\phi=\phi_{H_k}\circ\dots\circ\phi_{H_1},
\]
and its projection $\pi(S)\subseteq M$ satisfies
\[
V\nsubseteq[\pi(S)^\bot].
\]
\item[(ii)] For every pair of Hamiltonians $H_0,H_1\in\F$ there exists a $c$-weak Kam solution $\G_0$ for $H_0$ and a dual $c$-weak Kam solution $\G_1$ for $H_1$ such that
\[
V\nsubseteq\bigl[(\G_0\wedge\G_1)^\bot\bigr].
\]
Moreover, calling $h_0,h_1$ the Peierls barriers of $H_0$ and $H_1$, we can also suppose that $\Phi_{h_0}\Phi_{h_1}(\G_0)=\G_0$ and $\breve\Phi_{h_1}\breve\Phi_{h_0}(\G_1)=\G_1$.
\end{itemize}
\end{proposition}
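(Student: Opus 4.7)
The plan is to extract from Proposition \ref{equivalent expressions} a ``pointwise'' version of the hypothesis $V\not\subseteq R(c)$ and then realize the required sets as fixed points of suitable Peierls-type operators. Since $V\not\subseteq R(c)=\bigcap_{\m} R(\m)$ by Proposition \ref{equivalent expressions}(i), some minimal component $\m$ of $(\p_c,\Sic)$ satisfies $V\not\subseteq R(\m)=\sum_{\G\in\m,\,A\in\sic} R_A(\G)$. Because $V$ is one-dimensional and the $R_A(\G)=[\I_A(\G)^\bot]$ are vector subspaces, being outside the sum is equivalent to being outside every summand, so
\[
V\not\subseteq [\I_A(\G)^\bot]\qquad\forall\,\G\in\m,\ \forall\,A\in\sic.
\]
This is the only ``non-trivial'' input I will use in both parts.

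For part (i), given the string $H_1,\dots,H_k$, I form the composed cost $A=A_{H_k,c}\circ\cdots\circ A_{H_1,c}\in\sic$ and its Peierls-type barrier $h_A\in\sic$ defined as in \eqref{h_A}. Picking any $\G\in\m$, the invariance of $\m$ under $\Sic$ gives $\G_0:=\Phi_{h_A}(\G)\in\m$, and the idempotency $\Phi_{h_A}\circ\Phi_{h_A}=\Phi_{h_A}$ (from $h_A\circ h_A=h_A$) then yields $\Phi_{h_A}(\G_0)=\G_0$, hence $\Phi_A(\G_0)=\G_0$. I then take
\[
S:=\G_0\big|_{\I_{h_A}(\G_0)}=\G_0\,\tilde\wedge\,\breve\Phi_{h_A}(\G_0),
\]
which by the general properties of $\tilde\wedge$ recalled in Section \ref{pseudographs} is a compact Lipschitz graph over its projection $\pi(S)=\I_{h_A}(\G_0)$ and is contained in the $c$-pseudograph $\G_0$. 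The invariance of $S$ in both past and future under $\phi=\phi_{H_k}\circ\cdots\circ\phi_{H_1}$ will come from the natural extension of Proposition \ref{switched}(iii) to arbitrary length $k$, while $V\not\subseteq[\pi(S)^\bot]$ is the key property above applied to $(\G_0,h_A)$.

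For part (ii), given $H_0,H_1\in\F$ with Peierls barriers $h_0,h_1$, I take $A=h_{0,c}\circ h_{1,c}\in\sic$ and set
\[
\G_0:=\Phi_{h_A}(\G)\in\m,\qquad \G_1:=\breve\Phi_{h_A}(\G_0).
\]
From $\Phi_A(\G_0)=\Phi_{h_0}\Phi_{h_1}(\G_0)=\G_0$, applying $\Phi_{h_0}$ and using $\Phi_{h_0}^2=\Phi_{h_0}$ gives $\Phi_{h_0}(\G_0)=\G_0$, so $\G_0$ is a $c$-weak KAM solution for $H_0$. Dually, from $\breve T_{A'\circ A}=\breve T_A\circ\breve T_{A'}$ one has $\breve\Phi_A=\breve\Phi_{h_1}\breve\Phi_{h_0}$; the identity $\breve\Phi_{h_A}(\G_1)=\G_1$ forces $\breve\Phi_A(\G_1)=\G_1$, i.e.\ $\breve\Phi_{h_1}\breve\Phi_{h_0}(\G_1)=\G_1$, and applying $\breve\Phi_{h_1}$ together with $\breve\Phi_{h_1}^2=\breve\Phi_{h_1}$ yields $\breve\Phi_{h_1}(\G_1)=\G_1$, so $\G_1$ is a dual $c$-weak KAM solution for $H_1$. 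Finally, since $\Phi_{h_A}(\G_0)=\G_0$,
\[
\G_0\wedge\G_1=\G_0\wedge\breve\Phi_{h_A}\Phi_{h_A}(\G_0)=\I_{h_A}(\G_0),
\]
whose annihilator does not contain $V$ by the key property.

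The one genuinely non-routine step is the extension of Proposition \ref{switched}(iii) to arbitrary length strings needed in part (i). I expect this to go through essentially verbatim: the cost $A=A_{H_k,c}\circ\cdots\circ A_{H_1,c}$ is of $\F$-flow type by Proposition \ref{preserved}, and one can iterate Proposition \ref{pointwise basic forcing} together with Proposition \ref{action properties}(iv) through each factor $A_{H_i,c}$ to trace the Hamiltonian flow $\phi_{H_i}$ and conclude that $\phi^{-n}(\G_0)\subseteq \G_0|_{\I_{A^n}(\G_0)}$, from which the invariance of $S=\G_0|_{\I_{h_A}(\G_0)}=\G_0|_{\bigcap_n\I_{A^n}(\G_0)}$ in past and future follows as in the proof of Proposition \ref{switched}. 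The rest is an orderly chase through the idempotency and duality identities for Peierls operators collected in Subsection \ref{Sic}.
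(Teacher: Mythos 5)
Your proposal is correct and follows essentially the same route as the paper's proof: pass to a minimal component $\m$ where $V\not\subseteq R(\m)$, take a fixed point of the relevant Peierls-type operator inside $\m$, and read off the required set from $\I_{h_A}(\G_0)=\G_0\wedge\breve\Phi_{h_A}(\G_0)$. In part (ii) you are actually marginally more economical than the paper (which first uses a fixed point of $\Phi_{h_0\circ h_1}$ for the main claim and then separately passes to the Peierls barrier of $h_0\circ h_1$ for the ``moreover''), since working with $h_A$ from the start yields both conclusions at once.

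One small inaccuracy in the reduction step: ``being outside the sum is equivalent to being outside every summand'' is false even for one-dimensional $V$ (a line can lie in $W_1+W_2$ without lying in $W_1$ or in $W_2$). The implication you actually use -- $V\not\subseteq\sum_i W_i$ implies $V\not\subseteq W_i$ for each $i$ -- is the trivial direction and holds for any $V$, so neither the word ``equivalent'' nor the one-dimensionality plays a role at this step; the argument is unaffected.
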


\begin{proof}
Let $\m$ be a minimal component in $\p_c$ such that \eqref{V obstruction} holds true.
\begin{itemize}
\item[(i)] Call $A_1,\dots,A_k$ the time-one actions of $H_1,\dots,H_k$, and consider the composition
\[
A=A_k\circ\dots\circ A_1.
\]
Let us apply \eqref{V obstruction} with $n=1$ to the cost $h_{A_c}$ and to a fixed point $\G$ of $\Phi_{h_{A_c}}$ belonging to $\m$ (let us recall that, by invariance, every minimal component contains such a fixed point). We get
\[
V\nsubseteq[\I_{h_{A_c}}(\G)^\bot].
\]
Set $S=\G_{|\I_{h_{A_c}}(\G)}$. By a natural generalization of Proposition \ref{switched}, $S$ is invariant for $\phi$, thus the conclusion of item $(i)$ is achieved.
\item[(ii)] Call $h_{0},h_{1}$ the Peierls barrier at cohomology $c$ of $H_0$ and $H_1$ respectively. In \eqref{V obstruction} take $n=1$, $A_1={h_0\circ h_1}$ and $\G$ a fixed point of $\Phi_{A_1}$ in $\m$. We get
\[
V\nsubseteq[\I_{{A_1}}(\G)^\bot]=[\G\wedge \breve\Phi_{A_1} (\G)^\bot].
\]
Note that $\G$ is a $c$-weak Kam solution for $H_0$, because it belongs to the image of $\Phi_{h_0}$. Note also that $\breve\Phi_{A_1} (\G)$ is a dual $c$-weak Kam solution for $H_1$, because it belongs to the image of $\breve\Phi_{h_1}$ due to $\breve\Phi_{A_1}=\breve\Phi_{h_0\circ h_1}=\breve\Phi_{h_1}\circ\breve\Phi_{h_0}$. Hence the first part of the statement follows by setting $\G_0=\G$ and $\G_1=\breve\Phi_{A_1} (\G)$.

The second part of the statement follows by replacing, in the above argument, the cost $h_0\circ h_1$ with its Peierls barrier $h$, i.e.\ 
\[
h=\liminf_{m\to\infty}\ (h_{0}\circ h_{1})^m+m\,\alpha_{h_{0}\circ h_{1}}
\]
where $\alpha_{h_{0}\circ h_{1}}$ is the unique constant such that the $\liminf$ is real valued. The conclusion follows similarly as above, by taking in \eqref{V obstruction} $n=1$, $\Phi_1=\Phi_{h}$ and $\G$ a fixed point of $\Phi_h$ in $\m$, and then setting $\G_0=\G$, $\G_1=\breve\Phi_1 (\G)$. \qedhere
\end{itemize}
\end{proof}

\addcontentsline{toc}{section}{Acknowledgements}

\noindent\textbf{Acknowledgements. } 
I would like to thank Patrick Bernard for introducing me to this area as well as for helpful comments and discussions.


\begin{thebibliography}{10}

\bibitem{Arn64}(0163026)
V. I. Arnol{\cprime}d, \emph{Instability of dynamical systems with many degrees
  of freedom}, Dokl. Akad. Nauk SSSR \textbf{156} (1964), 9--12.

\bibitem{Ber02}
Patrick Bernard, \emph{Connecting orbits of time dependent {L}agrangian
  systems}, Ann. Inst. Fourier (Grenoble) \textbf{52} (2002), no. 5,
  1533--1568.

\bibitem{Ber08}
Patrick Bernard, \emph{The dynamics of pseudographs in convex {H}amiltonian systems},
  J. Amer. Math. Soc. \textbf{21} (2008), no. 3, 615--669. 

\bibitem{Bir32region}
George D. Birkhoff, \emph{Sur l'existence de r\'egions d'instabilit\'e en
  {D}ynamique}, Ann. Inst. H. Poincar\'e \textbf{2} (1932), no. 4, 369--386.

\bibitem{Bir32courbes}
George D. Birkhoff, \emph{Sur quelques courbes ferm\'ees remarquables}, Bull. Soc. Math.
  France \textbf{60} (1932), 1--26. 

\bibitem{BouPen12}
Abed Bounemoura and Edouard Pennamen, \emph{Instability for {\it a priori}
  unstable {H}amiltonian systems: a dynamical approach}, Discrete Contin. Dyn.
  Syst. \textbf{32} (2012), no. 3, 753--793. 

\bibitem{CanSin04}
Piermarco Cannarsa and Carlo Sinestrari, \emph{Semiconcave functions,
  {H}amilton-{J}acobi equations, and optimal control}, Progress in Nonlinear
  Differential Equations and their Applications, 58, Birkh\"auser Boston Inc.,
  Boston, MA, 2004.

\bibitem{CheYan04}
Chong-Qing Cheng and Jun Yan, \emph{Existence of diffusion orbits in a priori
  unstable {H}amiltonian systems}, J. Differential Geom. \textbf{67} (2004),
  no. 3, 457--517. 

\bibitem{CheYan09}
Chong-Qing Cheng and Jun Yan, \emph{Arnold diffusion in {H}amiltonian systems: a priori unstable
  case}, J. Differential Geom. \textbf{82} (2009), no. 2, 229--277.

\bibitem{Cui10}
X. Cui, \emph{On commuting Tonelli Hamiltonians: Time-periodic case}, Preprint at arXiv:1001.1324v3.

\bibitem{CuiLi11}
Xiaojun Cui and Ji Li, \emph{On commuting Tonelli Hamiltonians: Autonomous
  case}, Journal of Differential Equations \textbf{250} (2011), no. 11, 4104 --
  4123.

\bibitem{Fat}
Albert Fathi, \emph{Weak KAM Theorem and Lagrangian dynamics}, Preliminary
  version number 10 - version 15 June 2008.

\bibitem{HasKat02}
Boris Hasselblatt and Anatole Katok, \emph{Principal structures}, Handbook of
  dynamical systems, {V}ol.\ 1{A}, North-Holland, Amsterdam, 2002, pp. 1--203.

\bibitem{Jau}
Olivier Jaulent, Ph.D. thesis.

\bibitem{LeC07}
Patrice Le Calvez, \emph{Drift orbits for families of twist maps of the
  annulus}, Ergodic Theory Dynam. Systems \textbf{27} (2007), no. 3, 869--879.

\bibitem{Mar08}
Jean-Pierre Marco, \emph{Mod\`eles pour les applications fibr\'ees et les
  polysyst\`emes}, C. R. Math. Acad. Sci. Paris \textbf{346} (2008), no. 3-4,
  203--208. 

\bibitem{Mat91action}
John N. Mather, \emph{Action minimizing invariant measures for positive
  definite {L}agrangian systems}, Math. Z. \textbf{207} (1991), no. 2,
  169--207. 

\bibitem{Mat91twist}
John N. Mather, \emph{Variational construction of orbits of twist diffeomorphisms}, J.
  Amer. Math. Soc. \textbf{4} (1991), no. 2, 207--263.

\bibitem{Mat93}
John N. Mather, \emph{Variational construction of connecting orbits}, Ann. Inst.
  Fourier (Grenoble) \textbf{43} (1993), no. 5, 1349--1386. 

\bibitem{Moe02}
Richard Moeckel, \emph{Generic drift on {C}antor sets of annuli}, Celestial
  mechanics ({E}vanston, {IL}, 1999), Contemp. Math., vol. 292, Amer. Math.
  Soc., Providence, RI, 2002, pp. 163--171. 

\bibitem{Mos86}
J{\"u}rgen Moser, \emph{Monotone twist mappings and the calculus of
  variations}, Ergodic Theory Dynam. Systems \textbf{6} (1986), no. 3,
  401--413. 

\bibitem{Zav10}
Maxime Zavidovique, \emph{Weak {KAM} for commuting {H}amiltonians},
  Nonlinearity \textbf{23} (2010), no. 4, 793--808.

\bibitem{Zav12}
Maxime Zavidovique, \emph{Strict sub-solutions and {M}añé potential in discrete weak {KAM}
  theory}, Commentarii Mathematici Elvetici \textbf{87} (2012), no. 1, 1--39.

\end{thebibliography}
\end{document}